\numberwithin{equation}{section} \oddsidemargin=-.0cm
\def\beas{\begin{equation} \begin{aligned}}
\def\eeas{\end{aligned} \end{equation}}
\def\be{\begin{equation}}
\def\ee{ \end{equation}}
\def\bea{\begin{equation} \begin{aligned}}
\def\eea{\end{aligned} \end{equation}}
\newtheorem{rem}{Remark}
\def\br{\begin{rem}}
\def\er{\end{rem}}
\newtheorem{thm}{Theorem}[section]
\newtheorem{prop}[thm]{Proposition}
\newtheorem{corr}[thm]{Corollary}
\newtheorem{lem}{Lemma}[section]
\def\d{\mathrm{d}}
\newcommand{\bes}{\begin{equation*}}
\newcommand{\ees}{\end{equation*}}
\def\Ld{\mathcal{L}}
\def\R{\mathbf{R}}
\def\Qs {{\mathcal Q}}
\def\Ps {{\mathcal P}}
\title[Data-driven  non-Markovian closure models]{Data-driven  non-Markovian closure models}
\author[D. Kondrashov]{Dmitri Kondrashov}
\email{dkondras@atmos.ucla.edu}
\author[M.D. Chekroun]{Micka\"el D. Chekroun}
\author[M. Ghil]{ Michael Ghil}
\address[DK,MDC,MG]{Department of Atmospheric \& Oceanic Sciences and Institute 
of Geophysics \& Planetary Physics, 
405 Hilgard Ave,
Box 951565,
7127 Math Sciences Bldg.
University of California, 
Los Angeles, {CA 90095-1565,}  U.S.A.}
\address[MG]{Geosciences Department and Laboratoire 
de M\'{e}t\'{e}orologie Dynamique (CNRS and IPSL), \'Ecole Normale Sup\'{e}rieure, 
F-75231 Paris Cedex 05, FRANCE.}
\subjclass{}
\keywords{Empirical model reduction, inverse modeling, least-mean-square minimization, low-order models, memory effects, nonlinear stochastic dynamics, stochastic closure model}
\begin{document}
\maketitle

\begin{abstract}
This paper has two interrelated foci: (i) obtaining stable and efficient data-driven closure models by using a multivariate time series of partial observations from a large-dimensional system; and (ii) comparing these closure models with the optimal closures  predicted by the Mori-Zwanzig (MZ) formalism of statistical physics.
Multilayer stochastic models (MSMs) are introduced as both a generalization and a time-continuous limit of existing multilevel,  regression-based approaches to 
closure in a data-driven setting; these approaches include empirical model reduction (EMR), as well as more recent multi-layer modeling. It is shown that the multilayer structure of MSMs can provide a natural Markov approximation to the generalized Langevin equation (GLE) of the MZ formalism. 

A simple correlation-based stopping criterion for 
an EMR-MSM model is derived to assess how well it approximates the GLE solution. Sufficient 
conditions are derived on the structure of the nonlinear cross-interactions between the constitutive layers of a given MSM to guarantee the existence of a global random attractor. This existence ensures that no blow-up can occur for a broad class of MSM applications, a class that includes non-polynomial predictors and nonlinearities that do not necessarily preserve quadratic energy invariants. 

The EMR-MSM methodology is applied to a conceptual, nonlinear, stochastic climate model of coupled slow and fast variables, in which only slow variables are observed. It is shown that the resulting closure model with energy-conserving nonlinearities efficiently captures the main statistical features of the slow variables, even when there is no formal scale separation and the fast variables are quite energetic.

Second, an MSM is shown to successfully   reproduce the statistics of a partially observed,  Lokta-Volterra  model of population dynamics  in its chaotic regime. The challenges here include the rarity of strange attractors in the model's parameter space and the existence of multiple attractor basins with fractal boundaries. The positivity constraint on  the solutions' components replaces here the quadratic-energy--preserving constraint of fluid-flow problems  and it successfully prevents blow-up. 
\end{abstract}

\section{ Introduction and Motivation} 
\label{sec:intro}

\subsection{Background}
\label{ssec:background}

Comprehensive dynamical climate models aim at simulating past, present
and future climate and, more recently, at predicting it.  These
models, commonly known as general circulation models or global climate
models (abbreviated as GCMs in either case) represent a broad range of
time and space scales and use a state vector that is typically constituted  by several millions of
variables.

While detailed weather prediction out to a few days does require such
high numerical resolution, 
climate variability on longer time scales is dominated by large-scale patterns, which may require only a few appropriately selected modes  for their simulation and prediction \cite{GhilRob2000}. For a specific range of frequencies and targeted variables,  one may try to formulate low-order models (LOMs) for these purposes. Such models 
must account in an accurate way (i) for linear and nonlinear self-interactions between 
{a judiciously selected set of} resolved high-variance climate components; and (ii) for the cross-interactions between
the resolved components and the large number of unresolved ones. Although the present article is motivated  primarily
by the need for LOMs 
in climate modeling,  similar issues arise in many other areas of the physical and life sciences, and LOMs are becoming a key tool in various disciplines as diverse as astrophysics \cite{Stellar'05}, biological neuronal modeling  \cite{bachar2013stochastic}, molecular dynamics  \cite{noe2008transition,prinz2011markov} or pharmacokinetic and pharmacodynamic modeling \cite{donnet2013review}.

In climate dynamics, the prediction of the El Ni\~no-Southern Oscillation (ENSO) has attracted  increased attention during the past decades,  since ENSO constitutes the dominant mode of interannual climate variability, with major global impacts on temperature, precipitation, tropical cyclones, and human health \cite{Cam07P2, Kovats2003, Ropelewski87}; it has even been argued recently to have potential impacts on civil conflicts \cite{ENSOConflict2011}. 
The earliest successful predictions of ENSO were made using a dynamical
model governed by a set of coupled partial differential equations (PDEs)~\cite{CaneZebiak1986} that 
was  itself a highly reduced model by today's standards. Subsequently, stochastically driven linear LOMs \textemdash\, 
based either on observational data \cite{Penland_MWR89, PenlandSardeshmukh_JCL95,WinklerEtAl_JCL01} or 
on dynamical model simulations \citep{Xue94} \textemdash\, have been
used for ENSO predictability studies 
as well as for real-time forecasting \citep{PenlandMagorian1993}.  Nowadays, modeling 
ENSO by such LOMs can be considered as a significant success story, although predicting its extremes is still a challenge;  a recent survey 
on real-time prediction skill  of state-of-the art 
statistical ENSO models 
compared to comprehensive dynamical climate models  is given in \cite{iri12}.

Real-time ENSO predictions 
based on the {\it Empirical Model Reduction (EMR)} method 
introduced in \cite{kkg05,kkg05_enso} 
have proven to be highly competitive.\footnote{
Barnston and colleagues \cite{iri12} analyzed two dozen ENSO multi-model real-time predictions coordinated by Columbia University's International Research Institute for Climate and Society (IRI) over the 2002--2011 interval and concluded that the ``UCLA-TCD prediction (ensemble mean) 
has the highest seasonally combined correlation skill among the statistical models exceeded
by only a few dynamical models [...] as well as one of the smallest RMSE.'' Note that the former is based on models with a few tens of variables, while the latter have many millions of variables.}
 Within climate dynamics, the EMR  methodology has been successfully applied to the modeling of  many multivariate time series on  different time scales, 
{whether} arising in observed air--sea interactions in the Southern Ocean \citep{KravtsovEtAl_OS11}, in the identification and predictability analysis of nonlinear circulation regimes in atmospheric models of intermediate complexity \citep{kkg06,KondrashovKravtsovGhil_JAS10,Peters_etal12},  in the modeling of the Madden-Julian Oscillation (MJO) \cite{kcg_13MJO} or in the stochastic parameterization {of subgrid-scale mid-latitude processes \citep{kir}.}  

In these successfully solved {climate} problems, the key 
{ingredient} to modeling and predicting the dynamics of the macroscopic, observed variables from partial and incomplete observations of large systems is the appropriate use of some pre-specified 
self- and cross-interactions 
{between the} macroscopic variables, supplemented by some auxiliary, hidden variables.
In an EMR model, these interactions are typically chosen to be quadratic or linear and the associated hidden variables are arranged into a ``matrioshka'' of layers. Each supplementary layer in this ``matrioshka'' includes a hidden variable that is 
less auto-correlated than 
the one introduced in the previous layer, until some decorrelation criterion is reached; see  \ref{App1}. 
In practice, the unknown coefficients at the main level and the additional ones are learned by means of multilevel regression techniques; we refer to \citep{kkg09_rev}  for a review of the EMR methodology and a comparison with other model-reduction methodologies.

Quite recently, a couple of papers \cite{Majda_Yuan12,Majda_Harlim2012,MMH} 
pointed out the possibility of undesirable behavior  in EMR 
models, and proposed to add 
energy-preserving constraints 
on the quadratic terms in order to prevent  such behavior. r.  
The idea of adding constraints to prevent blow-up in an EMR formulation should clearly not be limited to the class of models that, in the absence of dissipation, possess quadratic energy invariants: 
many models in climate dynamics do not possess such invariants, e.g. models of the sea surface temperature (SST) field have energy terms that are linear in temperature, rather than quadratic. On the other hand, 
in certain situations that occur, for instance, in population dynamics \cite{smale1976differential, hirsch1982systems, hirsch1985systems,hirsch1988systems,hirsch1990systems, smith2008monotone} or chemical kinetics \cite{rossler1976chaotic,rossler1977chaos,segel1989quasi,maas1992simplifying,kalachev2001reduction}, while the nonlinearities might still 
be quadratic, introducing such constraints might actually be counter-productive; see 
Section \ref{sec_pop_dyn}.  The work of \cite{Majda_Harlim2012,MMH} 
proposed multi-level regression (MLR) models that did allow for quadratic interactions between the observed and some of the hidden variables and showed that these interactions --- given quadratic energy invariants in the flow models to which they were applied --- result in a stable, well-behaved reduction of the full flow models. As their very name indicates, though, the fundamental feature of the MLR models is still the multilevel structure proposed a decade ago
in the original EMR formulation.

The associated hidden variables in the original EMR formulation depend, due to this 
multilevel structure, on the past of the observed variables and bring therefore memory effects into the resulting low-dimensional stochastic models, in a fashion that is reminiscent of the closure models 
in the Mori-Zwanzig (MZ) formalism of statistical mechanics \cite{zwanzig2001nonequilibrium} or  
of related optimal prediction methods \cite{Chorin_MZ,Chorin_Hald-book}. 
The latter methods also deal with the problem of predicting just a few relevant variables but from a different perspective  than the EMR one, i.e. when the equations of the original, full system are available. The connection between the EMR formulation and MZ-type formalisms was first pointed out and illustrated by a simple example in the supporting information of \cite{CKG11}.


\subsection{{ Outline}}
\label{ssec:outline}

The background of this paper is thus provided by (a) the success of the EMR methodology in the modeling  and real-time prediction of spatio-temporal climate fields; (b) the recent criticisms in \cite{Majda_Yuan12,Majda_Harlim2012,MMH} of potential vulnerabilities in the original version of this methodology; and 
(c) its relationships with the closure methods  suggested by the MZ formalism. {The purpose of the paper is, therefore, (i) to generalize further both the original EMRs and the MLR models in \cite{Majda_Yuan12,Majda_Harlim2012,MMH}; (ii) to provide}
a mathematical analysis of data-based EMR models in their continuous-time limit and of the generalizations thereof; and (iii) to illustrate the insights and additional tools thus obtained by two simple applications. 

We call the generalized and rigorously studied continuous-time limit of EMRs
multilayer stochastic models (MSMs). In Sec.~\ref{sec:emr_form}, we formulate the closure problem in the presence of partial observations and consider EMR models as a candidate solution to this problem. In Sec.~\ref{Sec_MSM}, we introduce MSMs and show that an MSM can be written as a system of stochastic integro-differential equations (see Proposition \ref{Main_Prop}).  This system can lead in practice to a good approximation of the generalized Langevin equation (GLE) of the MZ formalism,  denoted here by \eqref{Eq_MZ_GLE} and studied in Sec.~\ref{MSM-MZ}. In this section, it is shown that the closure obtained by the GLE is theoretically optimal, given a time series of data, rather than a known master equation. Lemma  \ref{Main_lemma} supports this statement, subject to the appropriate ergodic assumptions and assuming an infinitely long multivariate time series of partial observations. 
The difference 
between the standard way of building the GLE and an approach based on averaging along trajectories, { as in} Lemma ~\ref{Main_lemma}, is similar to the Eulerian versus the Lagrangian viewpoint in fluid mechanics. 

 Sections~\ref{Sec_MSM} and \ref{MSM-MZ} are fairly theoretical and the hasty reader who might be more motivated by the applications can skip these two sections at a first reading and return to them later, after seeing the usefulness of their theoretical results in Secs.~\ref{Practical_MZ}--\ref{sec_pop_dyn}.   In Sec.~\ref{Practical_MZ}, practical issues of applying the results of Secs.~\ref{sec:emr_form}--\ref{MSM-MZ} to deriving accurate and stable EMRs are considered.
In particular, we will see that, under certain circumstances, an MSM can be understood as a Lagrangian approximation of the GLE; see { also} Proposition~\ref{Main_Prop}. 
 Numerical results for a conceptual stochastic climate model proposed in \cite{Majda_etal2005} are presented in Sec.~\ref{sec:Climate_ex}. These highly satisfactory results demonstrate, among other things, that the $\eta$-test formulated in Sec.~\ref{Practical_MZ} for the last-level residue of an MSM does provide quite an efficient  
criterion for the degree of approximation of the GLE solution by the appropriate MSM.

In Sec.~\ref{Sec_MSM} we also derive conditions on the cross-interactions between the constitutive layers of a given MSM  that guarantee the existence of a global random attractor. {This existence ensures  
that no blow-up  can occur for a broad class of} MSMs that generalize the class of 
EMR-like models used so far, including but not restricted to the MLR models of \cite{Majda_Yuan12,Majda_Harlim2012,MMH}; see Theorem \ref{Main_thm}. This class 
includes non-polynomial predictors and {nonlinearities that do not necessarily preserve quadratic} energy
{invariants, such as assumed in \cite{Majda_Harlim2012,MMH}; see  Corollary \ref{Main_corr}. 
The latter results are illustrated in Sec.~\ref{sec_pop_dyn} 
by solving a closure problem} arising in population dynamics that possesses merely linear and quadratic terms, but requires  a very different set of constraints  to prevent blow-up {of the reduced model.}

{Finally, four appendices provide further details on EMR stopping criteria, on real-time prediction using an MSM, on practical aspects of energy conservation, and on the interpretation of MSM coefficients.}


\subsection{Multilayer stochastic models (MSMs) and integro-differential equations}
\label{ssec:IDEs}

{In this subsection, we take a detour into the deterministic literature of integro-differential equations that will shed some further light on} the ability of an MSM to 
 provide an efficient closure model based on partial information on the full model, as derived from a time series.  
The parallels drawn herein between the two situations  yield a broader perspective on the {role of an MSM's multilayer structure} 
with respect to its representation as a system of stochastic integro-differential equations, 
cf. Proposition \ref{Main_Prop} below. 

The present remarks demonstrate the underlying relationships between multilayer systems of ordinary differential equations (ODEs) and systems of integro-differential equations, and 
help  
one understand why the multilayer structure of an MSM is essential 
{in constructing a class of stochastic differential equations (SDEs)} susceptible to approximate a GLE.
{These observations} are  actually rooted in older {mathematical ideas 
from the 
study of models that involve} distributed delays {\cite{Fargue,worz1978global}; such 
models arise} in theoretical population dynamics 
{and in the modeling of materials with memory \cite{dafermos1970asymptotic, Chek_al11_memo, chekroun_glatt-holtz}, as well as in climate dynamics \cite{Bhatt82, Roques2014ebmm}.}

Motivated by these remarks, we consider
now the following system of integro-differential equations
\be\label{Eq_integlogistic}
\frac{\d x_i}{\d t}=x_i\Big(b_i+\sum_{j=1}^n a_{ij}x_j +\sum_{j=1}^n\gamma_{ij} \int_{-\infty}^t g_{ij}(t-s)x_j(s) \d s\Big), \; 
 i = 1, \ldots, n;
\ee
this system models  the population dynamics of a community of $n$ interacting species, where $x_i$ denotes  the population density of the $i$-th species, $b=(b_1,b_2, ... ,b_n)$ the vector of intrinsic population growth
rates, $A=(a_{ij})$ and $\Gamma=(\gamma_{ij})$ denote the  interaction matrices, and $g_{ij}$ the memory kernels  that describe  the present response of  the {\it per capita} growth rate of a species $i$ to historical population densities  $x_j$.  
Volterra  proposed such a system of integro-differential
equations to describe an ecological system of interacting species and investigated 
 it for $n=2$ \cite[Chap.~IV]{volterra1931leccons}.
 
We wish to show how  system~\eqref{Eq_integlogistic} can be recast into a system of ODEs,  and assume for simplicity at first that \eqref{Eq_integlogistic} takes the 
form,
\bea\label{Eq_integlog2}
\frac{\d x_i}{\d t}&=x_i\Big(b_i+\sum_{j=1}^n a_{ij}x_j\Big), \; 
1 \le i \neq p \le n; \;  \text{and} \\
\frac{\d x_p}{\d t}&=x_p\Big(b_p+\sum_{j=1}^n a_{pj}x_j+\gamma_{pm} \int_{-\infty}^t g_{pm}(t-s)x_m(s) \d s\Big),
\eea
for some $p$ and $m$ in $\{1,\cdots,n\},$ 
i.e., that only a single equation exhibits memory effects. Furthermore, 
the memory kernel $g_{pm}$ is assumed to be given by the
Gamma distribution
 \be
 g_{pm}(t)=F_k(t)=\frac{{\alpha}^{k}}{(k-1)!}t^{k-1}e^{-\alpha t},
 \ee
 for some $\alpha>0$ and some positive integer $k\geq 1$. 
 
The key step is to note the recursion relation 
 \be
 \frac{\d F_k}{\d t}=\alpha F_{k-1}- \alpha F_k
 \ee 
 and to introduce the additional $k$ new variables $r_j$, with
 \be
 r_j(t) = \int_{-\infty}^t x_m(s)F_j (t-s) \d s, \; 
 j = 1, \ldots, k.
 \ee
By differentiation we obtain that these auxiliary variables 
obey the following system of ODEs, 
 \bea
 \frac{\d r_1}{\d t}&=\alpha (x_m -r_1), \\
 \frac{\d r_j}{\d t}&=\alpha (r_{j-1}-r_j), \;  j = 2, \ldots, k.
 \eea
This system is driven by $x_m$. More precisely, the dynamics of the auxiliary variable $r_1$  is directly slaved 
to that  of $x_m$, while the other $r_j$-variables are indirectly slaved to $x_m$, since each variable 
$r_j$ in a given layer $2\leq j \leq k$ interacts with $r_{j-1}$ in the previous layer, thus sharing a multilayer structure reminiscent of the one  in the original  EMR formulation \cite{kkg05,kkg05_enso}.   
 
These remarks allow us to recast the system  of integro-differential equations  \eqref{Eq_integlog2} 
as the following system of ODEs:
\bea\label{ODE_Lotka}
\frac{\d x_i}{\d t}&=x_i\Big(b_i+\sum_{j=1}^n a_{ij}x_j\Big), \;  1 \le i \neq p \le n, \\
\frac{\d x_p}{\d t}&=x_p\Big(b_p+\sum_{j=1}^n a_{pj}x_j+\gamma_{pm}r_k\Big),\\
 \frac{\d r_j}{\d t}&=\alpha (r_{j-1}-r_j), \; { j = 2, \ldots, k},\\
 \frac{\d r_1}{\d t}&=\alpha (x_m -r_1).\\
\eea 
The expansion procedure 
outlined above for the single memory effect in the simplified system of Eq.~\eqref{Eq_integlog2} can obviously be carried out for any number of memory terms in any number of equations, subject to the addition of a suitable
number of linear equations;  hence it can also be applied to the general  
system of integro-differential equations in Eq.~\eqref{Eq_integlogistic}.

One concludes that, for such a system integro-differential equations --- if the kernels are weighted sums of Gamma distributions, or more generally, if these kernels are solutions to a linear system of ODEs with constant coefficients --- then the original system can be transformed into  a system of ODEs. This transformation is 
known as the ``linear-chain trick'' \cite{Fargue, macdonald1978time, smith2011}. Of course, it is important to be able to go in the other direction as well. If one
finds an interesting solution of the ODE system \eqref{ODE_Lotka}, 
e.g. a periodic solution,
then one wants to know if  it does solve \eqref{Eq_integlog2} as well. In fact, it is not difficult to prove that any solution of \eqref{ODE_Lotka} that is bounded on the entire real line
is also a solution of the integro-differential equation \eqref{Eq_integlog2}; see 
~\cite[Prop.~7.3]{smith2011}. Since the resulting system of ODEs~\eqref{ODE_Lotka} does not involve the knowledge of the past of the $x_i$-variables,  one can say that  
a ``Markovianization'' of the original system has been performed by 
suitably augmenting 
the number of variables; 
 this augmentation procedure 
is actually well-known in the rigorous study of  systems with distributed delays, such as 
those that arise in the modeling of materials with memory, for instance, 
cf.~\cite{dafermos1970asymptotic, Chek_al11_memo, chekroun_glatt-holtz} and references therein.

This detour via a class of systems of integro-differential equations provides 
some general guidance on how to
``Markovianize'' a broad class of GLEs 
of the type predicted by  MZ closure procedures. Such GLEs 
take necessarily the form of systems of stochastic integro-differential equations.  It is natural, then, to seek
approximations to such MZ closures in the form of an augmented system of SDEs whose main, observed variables are supplemented by  appropriate auxiliary hidden variables, and one expects the latter variables to interact with the main ones and among themselves in a fashion  suggested by the ODE system~\eqref{ODE_Lotka}.

Of course, the corresponding interactions 
 have to take a specific form, depending on the applications. 
{For} dissipative systems, it is shown below that --- given a natural energy that has to {be dissipated ---}
 simple estimates allow 
{one to identify permissible} interactions that ensure the existence of dissipative MSMs; see Theorem \ref{Main_thm} and Corollary \ref{Main_corr}. Within this class of interactions, Proposition~\ref{Main_Prop} 
{ensures that} a simple correlation-based criterion formulated in Section~\ref{Practical_MZ} {does} address the problem of {approximating the} GLE by such MSMs.

The approach proposed in this article 
 complements, therewith, more traditional techniques for the Markovian approximation of the GLE. 
Typically, the  latter approaches rely on a continued-fraction expansion of the Laplace transform 
of the autocorrelation functions of the noise in the GLE, as introduced by Mori \cite{mori1965continued}, or on related approximations by rational functions of  linear GLE memory kernels \cite{kupferman2004fractional,kupferman2004fitting}.
Still, the applicability of these Markov approximation techniques 
is limited by relying on rather restrictive assumptions, 
such as systems with a separable,
quadratic Kac-Zwanzig Hamiltonian 
\cite{kupferman2004fractional} 
 or linear kernels, although non-Gaussian noise in the GLE 
is allowed \cite{kupferman2004fractional}. These restrictions led some authors to conclude that 
MZ models with 
{linear, finite-length} kernels form a subclass of 
autoregressive moving average (ARMA) models \cite{horenko2007data,niemann2008usage}. 

As shown in the body of this paper, our approach 
to the derivation of Markov approximations to the GLE goes beyond these limitations and allows for 
nonlinear kernels as well as for non-Gaussian noise;  it applies, {furthermore,} to a broad class of dissipative, rather than Hamiltonian systems. Nevertheless, the considerations in this subsection demonstrate 
the intuitive relevance of the multilevel structure inherent 
in the EMR methodology --- albeit initially designed from a different perspective \cite{kkg05,kkg09_rev} --- for the derivation of closure models from a multivariate time series of partial observations. 
This article  
shows, furthermore, that --- when suitably generalized --- the 
multilayer EMR methodology  provides an efficient means of deriving   
such closure models,   as well as facilitating their mathematical analysis.


\section{The closure problem from partial observations and its EMR 
solution}\label{sec:emr_form}


As discussed in Secs.~\ref{ssec:background} and \ref{ssec:outline} above, we are
motivated by the modeling of geophysical fluid flows --- as well as of more complex climate problems and of large-dimensional problems from other fields of science ---  based on a series of partial observations. We formulate below  the corresponding observation-based closure problem ($\mathfrak{P}$) and recall its EMR candidate solution, such as initially proposed in  \cite{kkg05,kkg05_enso}. Generalizations  of such an EMR solution are discussed in Section \ref{Sec_MSM} below.

One can consider the approach presented in this article as complementary to the derivation of { deterministic} nonlinear dynamics from 
{observations,} in the spirit  of Ma\~ne-Takens \cite{Mane1981, Takens1981}  
attractor reconstruction {by phase-space embedding of a time series:} instead of just trying to reconstruct the attractor from a single time series or from a multivariate one \cite{Broom_King1986a, Broom_King1986b, Ghil2002}, we attempt to actually write down equations that will produce a good approximation of the attractor, 
{including both} its geometry and invariant measure. 

Several approaches can be used for {this purpose.} Among them 
{are} the recent time-lagged polynomial sparse modeling technique 
{for the} nonuniform embedding of time series \cite{nichkawde2014sparse}, or the more traditional approaches based on ARMA models \cite{BoxJenkins70}. The 
nonlinear version { of the latter \cite{billings1989identification,lu2001new} is} somewhat closer to the EMR methodology, due to the combined  presence of noise and memory effects\footnote{Note 
that, in some sense, the EMR methodology can also be viewed as an extension of 
hidden Markov models (HMMs) \cite{Willsky2002Markov, Ihler2007graphical} or of artificial neural networks (ANNs) 
\cite{Hornik1989ann, Mukhin2014ann1, Mukhin2014ann2},
since the latter are generally nonlinear but do not involve the memory effects inherent in the EMR methodology; see \cite{kcg_13MJO, CKG11}.}.  However, the EMR models differ in their parameter estimation by the  top-to-bottom multilevel procedure  recalled below and subject to the stopping criterion described in~\ref{App1}.
As we will see from the theoretical results of Section \ref{Sec_MSM}, the multilevel structure intrinsic to EMR allows for a great flexibility in specifying various linear and nonlinear interactions between the main-level and hidden-level variables, in order to design MSM generalizations of EMR models; see Theorem \ref{Main_thm} and Corollary \ref{Main_corr} below.  The same multilevel structure allows us furthermore to relate MSMs to MZ models, as explained in Sections~\ref{MSM-MZ} and \ref{Practical_MZ}.  

As mentioned earlier, MZ models provide optimal solutions to closure problems such as the observation-based  problem ($\mathfrak{P}$) formulated below. As we will see, EMR or their MSM generalizations of Section \ref{Sec_MSM} provide approximate solutions to such optimal solutions; see Section~ \ref{sec:Climate_ex} and \ref{sec_pop_dyn} for applications.

\begin{itemize}
\item[($\mathfrak{P}$)]  Let $\{u(t_{k},\xi_j)\}$ be a set of discrete observations of 
{a given,} spatio-temporal, scalar field, where $\{t_k=k\delta t\}$\footnote{Here $\delta t$ denotes the sampling 
{interval} of the 
data. For the sake of simplicity, we consider in this article data that are 
{uniformly sampled} with a constant $\delta t$.}, and the grid points $\{\xi_j\}$ live in  
a spatial domain $\mathcal{D}$, which can be 
two- or three-dimensional.  The amount of data is  always assumed to be finite, {\it i.e.} $k\in\{1,...,N\}$ and $j\in \mathcal{J}$, where $\mathcal{J}$ is a finite set of  multi-indices; often, the data set is rather limited, and thus $N$ is not as large as we would like. 
The goal is to find a model that 
not only 
{describes} the evolution of the  
$\{u(t_{k},\xi_j)\}$ 
observed so far, but also 
{possesses} good prediction skills at future times, $k>N$, and at the same sites $\{\xi_j\}$.

\end{itemize}

When equations are available to describe the evolution of $\{u(t_k, \xi_j)\}_{j\in \mathcal{I}}$ in the domain $\mathcal{D}$, with card$(\mathcal{I})>>$ card$(\mathcal{J})$, this problem is related to closure problems of a type 
that has been widely studied in continuum physics, and a variety of approaches is available for dealing with such  problems \cite{Majda_turb}. In the general case,  {though,} no equations are available for 
the evolution of the full field $\mathcal{U}_{\mathcal{I}} := \{u(t_k, \xi_j)\}_{j\in \mathcal{I}}$, while the subfield $\mathcal{U}_{\mathcal{J}}:=\{u(t_k, \xi_j)\}_{j\in \mathcal{J}}$ is assumed to consist of {\it partial observations} from the field $\mathcal{U}_{\mathcal{I}}$ that 
contains subgrid information not resolved by the field $\mathcal{U}_{\mathcal{J}}$. 

The dynamics of the unobserved variables $\{u(t_k, \xi_j)\}_{j\in \mathcal{I}\backslash \mathcal{J}}$ is thus lacking and the main issue is to derive an efficient  parameterization of the interactions between the resolved and unresolved --- i.e.,  roughly speaking, the observed and unobserved --- variables in order to derive equations that model the evolution of the field $\mathcal{U}_{\mathcal{J}}$ with reasonable accuracy.  Furthermore, the scalar field $u$ may interact with other fields  that are not taken into account or not observed, which makes 
an accurate modeling of the field $\mathcal{U}_{\mathcal{J}}$ even more difficult. Typically, two-point statistics --- such as correlations or the joint probability density function (PDF) of the observed variables --- are 
used to assess the effectiveness and accuracy of the proposed closure model.

Besides these modeling aspects, the prediction requirement in the problem ($\mathfrak{P}$) above is clearly challenging, in theory as well as in practice. It is 
well known that an inverse model may approximate the data 
accurately over the training interval, while exhibiting poor prediction skill on the validation interval during hindcast experiments, and performing even worse in real-time prediction.
Nonetheless and as mentioned in Sec.~\ref{ssec:background}, the data-driven EMR procedure --- described in Steps 1--3 below--- has
proven to be 
{quite skillful in} real-time ENSO prediction \cite{iri12}.  For example,  although  the EMR-ENSO model of \cite{kkg05_enso} is based on a leading subset of principal components of the  
SST field that 
constitute only a small fraction of the total set of degrees of freedom of the ENSO phenomenon; still, this EMR-ENSO model ``has the highest seasonally combined correlation skill among the statistical models, exceeded by only a few dynamical models''; see footnote$^1$.

As initially formulated in \cite{kkg05}, after appropriate compression
of the available data, cf.~Step 1 below, the 
{key} idea consists 
in seeking 
forced-dissipative models of the form
\begin{equation}
\label{equ:EMR_simple}
\frac{\delta x_k}{\delta t} = -Ax_k+B(x_k,x_k)+ F+\mathrm{``noise."}
\end{equation}
Here $x_k = x(t_k)$ is the $d$-dimensional column vector of the relevant variables to be modeled, ${\delta x_k}/{\delta t}$ represents its rate of change   in time, $F$ is a column vector, $A$ is a square $d\times d$ matrix, and $B(x_k,x_k)$ accounts for the bilinear interactions that influence the evolution of ${\delta x_k}/{\delta t}$. The noise term accounts typically 
for the effects of the unresolved variables  --- i.e., of subgrid-scale processes or  of other unobserved variables  --- on the dynamics of the observed variables, as modeled by Eq.~\eqref{equ:EMR_simple}. 

Often, this noise is simply assumed to be Gaussian and white in time but, in practice, its amplitude may depend on the
fluctuating variable $x$ itself. It is, therefore reasonable to model such a noise as {\it state-dependent},
and allow for it to possess non-vanishing correlations in time as well as in space.
Actually, Markov representation theorems 
ensure the existence of such a state-dependent noise, as long as the original, possibly large-dimensional, deterministic, time-discrete, observed dynamical system,  
possesses a relevant invariant measure that is {\it physical} in the sense recalled in Section~\ref{MSM-MZ} below, cf. {Eq.~\eqref{Eq_phys_rev};} see also Corollary B in the Supporting Information of \cite{Chek_al13_RP}. In the EMR methodology, this state-dependent noise is modeled by means of successive regressions against the coarse observed variable $x$, until the time correlations of the residual noise 
 satisfy suitable decorrelation criteria.\footnote{For the sake of clarity, { the decorrelation criterion is} recalled in \ref{App1}. 
Roughly speaking, the residual noise { has to have} vanishing correlations at lag $\delta t$, i.e., at the sampling  
{interval} of the available data.  The successive minimizations, 
described in Steps 1--3 below, have been shown, in practice,   to reach such a limit, within a reasonable error, after solving only finitely many minimization problems; see \cite{kkg05, kkg05_enso, kkg06, kkg09_rev} and 
Remark \ref{Rem:scaling} below.}

The main 
steps of the EMR { procedure} can  be summarize as follows:
\begin{itemize}
\item[(i)] {\bf Step 1: Data compression.} Let $\mathcal{B}=\{E_1,...,E_d\}$ be an orthogonal basis\footnote{This basis is determined from the data; empirical orthogonal function (EOFs) are often used in EMR methodology, but other possibilities do exist \cite{kir, kkg09_rev}; see also \cite{Crommelin_Majda} for 
{a comparison} of different bases { used} in model reduction.} 
that spans a finite-dimensional Euclidean space of dimension $d$ with associated norm $\|\cdot\|$, so that the following orthogonal decomposition of $u(t_{k},\xi_j)$ onto $\mathcal{B}$ can be easily determined:
\begin{equation}
\label{equ:expand}
u(t_{k},\xi_j)=\sum_{i=1}^{d} x_i(t_k) E_i(\xi_j).
\end{equation}
In what follows, 
$x(t_k)$ is the $d$-dimensional column vector\footnote{If $\mathcal{B}$ denotes the set of EOFs, then the $x_i$'s correspond to the principal components.} $x(t_k):=(x^1(t_k),...,x^d(t_k))^{{\mathrm T}}.$   Given this decomposition, the problem ($\mathfrak{P}$) is now restricted to the modeling and eventual prediction of the time evolution of $x(t)$ or even of just a few of its components. A data-driven approach to this problem proceeds in the two following steps. 

\item[(ii)]  {\bf Step 2: Main-level regression.} 
Given the multivariate time series 
{$\{x(t_k): k = 1,\ldots,N\}$, one seeks} a deterministic column vector $F_N$, a square matrix $A_N$, and a quadratic form $B_N(x,x)$ 
to achieve the following  $l_2$ minimization:
\begin{equation}
\label{equ:argmin}
{[F_N,A_N,B_N]}=\underset{F,A,B}{\textrm{argmin}}  \Big( \sum_{k=1}^{N-1}\Big\| \frac{\delta x_k}{\delta t}-F+Ax_k-B(x_k,x_k)\Big\|^2\Big);
\end{equation}
{here} $\delta x_k:= x_{k+1}-x_k$, and  
{$x_k = x(t_k)$ represents the 
evolution in time} of $u(t_{k},\cdot)$ { as decomposed onto the basis $\mathcal{B}$}. 
Clearly,  $F_N$, $A_N$ and $B_N$ in Eq.~\eqref{equ:argmin} 
are meant to approximate $F$, $A$ and $B$ in Eq.~\eqref{equ:EMR_simple} so as to optimize in an $l_2$-sense the evolution of $x_k$ with respect to the data $u(t_{k},\xi_j)$. The subscript $N$ 
emphasizes the fact that  $F_N$, $A_N$ and $B_N$ are estimated from the finite-length, multivariate time series $\{x_k\}_{k\in \{1,...,N\}}$. 

\item [(iii)] {\bf Step 3: Multilevel regression.}  Let $ r_k^{(0)}:= r^{(0)}(t_k)\in \mathbb{R}^d$ be the $d$-dimensional regression residual associated with the $l_2$-minimization problem of { Step 2.}  
We seek now a $d\times 2d$ 
rectangular matrix $L^{(1)}_N$   cross-interactions, associated with the main-level residual $ r_k^{(0)}$ and { the main-level variable $x$},  so that:

\begin{equation}
\label{emu:level_1}
L_N^{(1)}=\underset{L\in \mathbb{R}^{d\times 2d}}{\textrm{argmin}} \Big( \sum_{k=1}^{N-1}\Big\| \frac{\delta  r_k^{(0)}}{\delta t}-L\big[(x_k)^T,( r_k^{(0)})^T\big]^T\Big\|^2\Big).
\end{equation}
 
This main-level step is followed by solving a sequence of $l_2$-minimization problems: for each $m$ of interest,
we seek a $d\times (m+1)d$ 
rectangular matrix $L^{(m)}$ of cross-interactions, which is given by:

\begin{equation}
\label{equ:higher_level}
L_N^{(m)} = \underset{L\in \mathbb{R}^{d\times (m+1)d}}{\textrm{argmin}} \Big( \sum_{k=1}^{N-1}\Big\| \frac{\delta  r_k^{(m-1)}}{\delta t}-L\big[(x_k)^T,( r_k^{(0)})^T,...,( r_k^{(m-1)})^T\big]^T\Big\|^2\Big).
\end{equation}
This recursive sequence of minimizations is stopped when, for some $m = p$, the residual  $r_k^{(p)} \delta t$ has vanishing autocorrelation at lag $\delta t$, according to  the stopping criterion described in \ref{App1}.

\end{itemize}

From a practical  point of view, 
the solution of the successive minimization problems described above is subject to the general problem of {\it multicolinearity}, which arises from finite sample size of the { data set \cite{kkg05}.} Multicolinearity can lead to {statistically} unstable estimates, in the sense that the latter can be very sensitive to small changes in the data set. Various regularization techniques 
exist to deal with this problem \cite{hastie2009linear};  they rely mainly on  penalizing the $l_2$-functionals in { Steps 1--3 above}, 
and they have been shown to be effective in 
yielding stable estimates for the EMR models of various climate 
fields \cite{kkg05, kcg_13MJO, kkg09_rev}.

Once the time-independent forcing term $F_N$, the square matrix $A_N$, the bilinear term $B_N$, and the rectangular matrices 
$\{L^{(m)}_N: m = 1, \ldots ,p\}$ have been determined, we are left 
with an EMR model of the form:
\be\label{Eq_EMR} \tag{{\it EMR}}
\boxed{
\begin{aligned} 
& \hspace*{1.4em}x_{k+1}-x_k=\big[-A_Nx_k +B_N(x_k,x_k)+F_N\big]\delta t +  r_k^{(0)}\delta t, \quad k\in\{1,...,N\},\\
& \hspace*{1.4em}  r_{k+1}^{(m-1)}- r_{k}^{(m-1)}= L^{(m)}_N\big[(x_k)^T,( r_k^{(0)})^T,...,( r_k^{(m-1)})^T\big]^T\delta  t+ r_k^{(m)}\delta t, \quad { 1 \le m \le p},
\end{aligned} 
}
\ee
{since, in general, $p \geq 1$. 
This EMR models} the dynamics of the multivariate time series { $\{x_k: k = 1, \ldots,N\}$} of length $N$ { and should be able to predict it for $k > N.$} 

Note that, for prediction purposes, the model defined by Eq.~\eqref{Eq_EMR} has to be initialized appropriately in the past, along with estimating the  {\it hidden} $ r^{(m)}$-variables from the {\it observed} $x$, as explained in  \ref{App3}.  For the moment we turn to {a natural continuous-time formulation of Eq.~\eqref{Eq_EMR} that will help  set up a framework in which the EMR methodology will 
become theoretically more transparent by benefitting from a natural MZ interpretation; see Secs.~\ref{Sec_MSM}--
\ref{Practical_MZ}.

{Assuming} that $F_N$, $A_N$,  $B_N$ and 
{$\{L^{(m)}_N: m = 1, \ldots ,p\}$} have converged once the amount of data gets large enough\footnote{in some appropriate weak sense 
\cite{billingsley2009convergence}.}, we are formally left with an MSM that becomes, in the  
limit of $\delta t\rightarrow 0$ and $N\rightarrow \infty$:
\be\label{Eq_MSM} 
\begin{aligned} 
& \hspace*{1.4em}\d x=\big[-Ax +B(x,x)+ F\big]\d t +  r^{(0)}\d t, \quad 
{ 0 < t \le T^*,} \\
& \hspace*{1.4em} \d r^{(m-1)}= L^{(m)}\big[(x)^T,( r^{(0)})^T,...,( r^{(m-1)})^T\big]^T\d t+ r^{(m)}\d t, \quad { 1 \le m \le p.}
\end{aligned} 
\ee
{Here $[0,T^*]$} corresponds to the time interval over which the data are available --- also known as the {\it training interval ---}  during which the MSM parameters of \eqref{Eq_MSM} have been estimated.  

Note that in the theoretical limit above, the minimization functionals appearing in Steps 1--3 
have to be replaced by their continuous analogues; this just means replacing the $l_2$-functionals by their $L^2(0,T^*)$ version. {The rigorous justification of such a limit is of course a nontrivial task 
for general data. It implies in particular going beyond traditional finite-time convergence theory for numerical methods applied to SDEs \cite{KP92}.  SDEs such as those considered in \eqref{Eq_MSM} involve
only locally Lipschitz coefficients, 
{because of} the bilinear term $B$,  
{and thus} require more sophisticated techniques to prove strong convergence results of the solutions of  \eqref{Eq_EMR}\footnote{ in which $F_N$, $A_N$,  $B_N$ and $L^{(m)}_N$ would be replaced by $F$, $A$,  $B$ and  $L^{(m)}$, respectively.} to  
the solutions of \eqref{Eq_MSM}, when the former { system} is interpreted as an Euler-Maruyama discretization of the latter. 
{Such a convergence can still} be guaranteed, { provided the 
$n^{\rm{th}}$ moment} of the exact and the numerical solutions are bounded for some { $n \ge 3$;} see \cite{higham2002strong,mao2006stochastic}.  
{Leaving such} considerations aside in the present article, 
{we focus here} on  theoretical { MSM} aspects that will turn out to be useful from a modeling point of view,  
{as shown in Secs.~\ref{Sec_MSM}
--\ref{Practical_MZ}.}

\br\label{Rem:scaling}
Note that convergence results such as those mentioned above have been 
{ obtained} for SDEs driven by white noise. 
The system~\eqref{Eq_MSM} is a system of random differential equations (RDEs),  in which the last-level noise term $ r^{(p)}$ is not necessarily white according to the stopping criterion recalled in \ref{App1}. Equation~\eqref{Eq_MSM} can, however, turn out to be well approximated by a genuine system of SDEs driven by a Wiener process $W$. Let us assume that the residual noise $ r^{(p)}$, obtained from the last level of 
{ Step 3} above, obeys the following scaling law as $\delta t \rightarrow 0:$
\be\label{Eq_approx_white}
 r^{(p)} \sqrt{\delta t} \underset{\delta t \rightarrow 0}\sim \mathcal{N}(0,Q),
\ee 
where $\mathcal{N}(0,Q)$ denotes the normal distribution with  zero mean and covariance matrix  $Q$.
Then  $ r^{(p)}\delta t$  
{ is} a natural approximation of $Q\; \d W$ as $\delta t \rightarrow 0$, where $\d W$ denotes a $d$-dimensional white noise process. 

We emphasize,  though, that  the scaling law \eqref{Eq_approx_white} may be violated by certain data sets and 
that other scaling laws may apply to the stochastic process obtained in the limit of $\delta t \rightarrow 0$.
These laws could lead, for instance, to MSMs driven by L\'evy $\alpha$-stable processes, as 
argued for certain climate fields 
~\cite{penland_levy}. 
\er

When the scaling of \eqref{Eq_approx_white} is violated, one may wish to consider other types of interactions than quadratic or linear in the learning stage of \eqref{Eq_EMR}.  Doing so may better explain the state dependence of the main-level residual noise,  and lead naturally to the more general class of MSM considered hereafter.


\section{MSMs: general formulation and random attractors}
\label{Sec_MSM}

Motivated by the interest in more general problems than those posed by geophysical fluid flows, we identify here a general class of MSMs that possess a global random attractor, 
{whose existence prevents} the blow-up of model solutions. This class extends the one proposed recently in \cite{Majda_Harlim2012,MMH} and {does not require} the inclusion of energy-preserving nonlinearities to ensure the existence of such an attractor.

\subsection{MSMs and global random attractors}
\label{Sec_MSMs_att}
A natural extension of the  MSMs introduced { in Sec.~\ref{sec:emr_form}} 
can be written in compact form as follows:
\be\label{Eq_MSMa} 
\begin{aligned} 
& \hspace*{1.4em}
\d x=F (x) \d t + \Pi  r \;  \d t,\\
& \hspace*{1.4em} 
\d r = ( C x -D r ) \d t + \Sigma \,\d W_t.
\end{aligned} 
\ee
Here {$ r=( r_1,..., r_p)^{\rm T}$} is a $p\times q$-dimensional vector 
{with components that are $q$-dimensional each, such that $pq\geq d$, while} $C$ and $D$ are respectively $pq \times d$ and  $pq \times pq$ matrices, and $\Pi$ is the orthogonal  projection from $\mathbb{R}^{pq}$  onto $\mathbb{R}^{d}$. The matrix $\Sigma$ is a $(pq)\times q$-rectangular matrix whose last $q$ rows are equal to a positive-definite matrix $Q$ and zero elsewhere.

Another, even more general extension can be written in the following form:
\be\label{Eq_MSMgen} \tag{{\it MSM}}
\boxed{
\begin{aligned} 
& \hspace*{1.4em}
\d x=f_1(x) \d t  + g_1(u) \d t \;  \\
& \hspace*{1.4em}
\d y=f_2(y) \d t  +g_2 (u) \d t +  \Pi  r  \d t, \\
& \hspace*{1.4em} 
\d r = h(u, r)\d t + \Sigma \, \d W_t.
\end{aligned} 
}
\ee
{Here $u=(x,y)\in \mathbb{R}^d\times \mathbb{R}^{d'}$ and $\Pi$ 
is} the orthogonal projection from $\mathbb{R}^{pq}$ onto $\mathbb{R}^{d'}$; {for simplicity,}  
the functions $f_1,f_2,g_1,g_2$ and $h$ are assumed to be continuous and locally Lipschitz 
{in $\mathbb{R}^d\times \mathbb{R}^{d'}$.
The $d \times d'$-dimensional vector $y$ here} plays the role of a hidden variable, similar to {the $d$-dimensional vector} $r_0$ introduced in \eqref{Eq_MSM}, except that the  $y$-variable is 
allowed {now} to contribute nonlinearly to the dynamics of $x$ via the vector field $g_1$ and {that it} is not necessarily of the same dimension as 
$x$, i.e.~$d'$ may differ from $d$. 
{Likewise,} nonlinear effects are allowed this time for 
{both the} hidden variables $y$ and $r$. 
{The generalization of \eqref{Eq_MSM} via $d \neq d'$ and nonlinear effects in the second equation of \eqref{Eq_MSMgen} was} proposed recently in \cite{Majda_Harlim2012,MMH}, {but $h$ there} was assumed to be linear.  

Obviously, the cross-interactions between the observed and hidden variables, 
modeled { here} by the terms { $g_1, g_2$} 
and $h$, as well as the self-interactions  
contained in $h$ and 
{carried} by $f_1$ and $f_2$, have to be designed properly so that the system remains stable. {Ref. \cite{Majda_Harlim2012} has addressed this problem} for the class of {quadratic-energy--preserving} nonlinearities, {while keeping $h$ linear, by applying} 
geometric ergodic theory to the Markov semigroup associated with such systems.  
In this section, we adopt an alternative approach and address the problem of existence of random attractors for  the more general class of inverse models governed by \eqref{Eq_MSMgen}.

\br\label{Rem:Levy}
Note that random attractors may exist while the H\"ormander condition used in \cite{Majda_Harlim2012} is violated. {Such a violation} may arise, for instance, for stochastically perturbed system of ODEs  {that exhibit a strange attractor and support a Sinai}-Bowen-Ruelle measure $\mu$, 
when the noise acts transversally to the support of $\mu$, 
along the stable Oseledets subspaces \cite{eckmann_ruelle}. In other words, random attractors may still exist for SDEs driven by degenerate noise and { our present approach may  
be} viewed as complementary 
{to that of} \cite{Majda_Harlim2012}; see Theorem \ref{Main_thm} and Corollary \ref{Main_corr} {below.} Furthermore, 
the random-attractor arguments used here are not limited to the case of white noise and can be used to ensure the existence of non-degenerate solutions for MSMs driven by more general noises \cite{gess_al11}, as proposed in \cite{penland_levy}; see also Remark \ref{Rem:scaling}. 
\er

We proceed by transforming the system of SDEs in \eqref{Eq_MSMgen} into a system of RDEs that is more amenable to the application of energy estimate 
or Lyapunov function 
{methods, which} we use hereafter; see \cite{schenk1998random}. Mathematically, the benefits of dealing with the transformed system instead of the original one relies on the  H\"older  property of {Ornstein-Uhlenbeck processes,} which allows  one to rely on the  classical theory of non-autonomous evolution equations, parameterized by the realization $\omega$ of the driving noise; see 
\cite[Lemma 3.1  and Appendix A]{CLW14_vol1}.  To  perform this transformation,  we consider 
the following auxiliary $pq-$dimensional Ornstein-Uhlenbeck process, obtained  as the stationary solution of
\be\label{Eq_OU_gen}
\d z = -A_0 z\, \d t + \Sigma \, \d  W_t,  
\ee
where $A_0$  is a positive-definite $pq \times pq $ matrix, that will be 
determined later on. The change of variables $ r \leftarrow  r -z$ transforms the system \eqref{Eq_MSMgen} into the aforementioned system of RDEs:
\be
\begin{aligned} \label{genMSM_transf}
& \hspace*{1.4em}
\frac{\d x }{\d t}=f_1(x) + g_1(x,y) , \;  \\
& \hspace*{1.4em}
\frac{\d y}{\d t}=f_2(y)+g_2 (x, y) + \Pi ( r +z) ,\\
& \hspace*{1.4em} 
\frac{\d r}{\d t}= h(u,r)+ A_0 z ,
\end{aligned} 
\ee
and let us assume, 
{ for simplicity,} that  
\be\label{Eq_decomp_h}
h(u, r)=-D  r + \widetilde{h}(u, r),
\ee
with $D$  positive definite and $A_0 = D$. 

Following the Lyapunov function techniques 
in \cite{schenk1998random} for the existence of random attractors,  
we assume that there exists a map $V: \mathbb{R}^{d+d'} \rightarrow
\mathbb{R}^+$, 
which is continuously differentiable and has the property that pre-images of bounded sets are bounded, e.g.~$V$ is polynomial. Furthermore, we require that $V$ verify the following  conditions:
\be 
\label{cond1}\tag{H$_1$} 
\exists \; \alpha, \beta >0 \; : \;   \langle f_1(x), \nabla_{x} V (u)\rangle +\langle f_2(y), \nabla_{y} V (u)\rangle+ \alpha \cdot V(u) \leq \beta,
\; \forall \; u \in \mathbb{R}^{d+d'}, 
\ee

\be
\label{cond2}\tag{H$_2$} 
\exists \; a, b >0 \; : \;  \langle g_1(u), \nabla_{x}  V (u)\rangle + \langle g_2(u), \nabla_{y}  V (u)\rangle\leq   a   V(u)+b,
\; \forall \; u \in \mathbb{R}^{d+d'}, 
\ee

\be
\label{cond3}\tag{H$_3$}
\langle \Pi r, \nabla_{y} V(u)\rangle +\langle \tilde{h}(u,r), r\rangle \leq m_1 V(u) +m_2\|r\|^2,
\ee

\be
\label{cond4}\tag{H$_4$}
\|\nabla_{y} V(u)\|^2 \leq \gamma V(u) + C,
\ee
where $\langle f, g \rangle$ is the Euclidean inner product in $\mathbb{R}^{d+d'}$.

For a given realization $\omega$ of the noise, {we denote by $V (t,\omega)$ the 
values} of $V$ along a trajectory $u(t,\omega)=(x(t,\omega),y(t,\omega))$,
{and obtain, on the} one hand, from the $x$- and $y$-equations in \eqref{genMSM_transf}, 
\be\label{EE1}
\d V (t,\omega)= \Big(\sum_{i=1}^2  \langle
 f_i(x_i (t,\omega)), \nabla_{x_i}  V(t,\omega) \rangle  + \langle
g_i(x(t,\omega)), \nabla_{x_i}  V (t,\omega)\rangle  \Big) \d t,
\ee
{where we used the notations $x_1=x, \; x_2=y$;} 
on the other, after multiplication of the $ r$-equation by $ r$ in \eqref{genMSM_transf}, { we get}
\be\label{EE2}
\frac{1}{2}\d \|r\|^2 +\lambda \|r\|^2 \d t\leq  \Big( \langle \Pi  (r +z), \nabla_{y} V(t,\omega)\rangle +\langle \tilde{h}(u,r), r\rangle  \Big) \d t,
\ee
where $\lambda>0$ denotes the smallest eigenvalue of $D$.

By adding \eqref{EE1} and \eqref{EE2},  by using \eqref{cond1}--\eqref{cond3},  and by applying the $\epsilon$-Young inequality 
to the term $ \langle \Pi_2 z, \nabla_{y} V(t,\omega)\rangle$, we obtain} that, for any $\epsilon >0,$ there exists $C_{\epsilon}>0$ { such that}
  \bea
 \d V(t,\omega) + \frac{1}{2}\d \|r\|^2 + (\alpha V (t,\omega)+ \lambda \|r\|^2 )\d t & \leq \Big((a+m_1 +\epsilon \gamma) V (t,\omega) + \\
 & \beta +b + m_2 \|r\|^2 + \epsilon C + C_{\epsilon} \| \Pi_2 z(t,\omega)\|^2 \Big) \d t. 
 \eea

This inequality, in turn, leads to 
\be
 \d \widetilde{V}(t,\omega) +\kappa \widetilde{V}(t,\omega) \d t \leq (\widetilde{\beta} + \xi(t,\omega)) \d t,
\ee
with $\widetilde{V}(t,\omega) := V(t,\omega) + \frac{1}{2} \|r\|^2$, $\widetilde{\beta}=\beta +b+ \epsilon C,$ { and} 
\be\label{defkappa}
\kappa = \min(\alpha -a -m_1-\epsilon_1 \gamma ;2( \lambda -m_2)) 
\ee
for some $\epsilon_1>0$,
where 
\be 
 \xi(t,\omega)=C_{\epsilon_1} \| \Pi z(t,\omega)\|^2.
\ee
 
 Since the Ornstein-Uhlenbeck process $z$ is stationary with respect to the canonical shift $\theta_t$, so is the process $\xi(t,\omega)$, which can be written without loss of generality as $\xi(t,\omega)=\xi(\theta_t \omega)$; see \cite[Appendix A]{Arnold98} and \cite{csg11}. 
 Recalling that 
 \be
 \underset{t\rightarrow \infty}\lim \frac{z(t,\omega)}{t}=0,
 \ee
 almost surely (see \cite[Lemma 3.1]{CLW14_vol1}), we deduce that, for almost all $\omega$, the positive random 
variable 
  \be
 R(\omega)=\frac{\beta}{\kappa}+ \int_{-\infty}^0 e^{\kappa s}\xi(\theta_ s \omega) \d s,
 \ee
 is well-defined, provided that  $\kappa >0$, which we assume hereafter to be the case.
 
For $\epsilon>0$, the random set 
 \be
 \mathfrak{B}_{\epsilon}(\omega):= \{ v=(x,y,r)\in X: \widetilde{V}(v) \leq
(1+\epsilon)R(\omega)\},
\ee
 in $X:=\mathbb{R}^{d+d'}\times \mathbb{R}^{pq}$ is then, by assumption on $V$, almost surely bounded. One can conclude forthwith that $\mathfrak{B}_{\epsilon}(\omega)$ is a random set that is pullback absorbing for the RDS $\varphi$ generated by \eqref{genMSM_transf}; namely,
for any bounded  set $\mathfrak{B} \subset  X$, there exists a positive random absorption time  $t^\ast(\omega, \mathfrak{B})$ such that, for every $t\geq t^\ast(\omega, \mathfrak{B}),$
\be
\widetilde{V}(\varphi(t,\theta_{-t} \omega)\mathfrak{B}) \subset \mathfrak{B}_{\epsilon}(\omega),
\ee
i.e.~the compact random set $\mathfrak{B}_{\epsilon}$ is absorbing every bounded deterministic {set} $\mathfrak{B}$ of $X.$ Standard results from RDS theory (see \cite[Theorem 3.11]{CF94}) allow us then to conclude that
a global random attractor exists for the RDS generated by \eqref{genMSM_transf} and therefore for the one associated with \eqref{Eq_MSMgen}. 

We have thus proved the following theorem.

\begin{thm}\label{Main_thm}
Let us consider  a system  \eqref{Eq_MSMgen} that generates an RDS.  
Assume that there exists a map $V: \mathbb{R}^{d+d'} \rightarrow
\mathbb{R}^+$, such that $V \in C^1(\mathbb{R}^{d+d'}, \mathbb{R}^+)$,
which has the property that pre-images of bounded sets are bounded and 
that the conditions \eqref{cond1}\textendash\eqref{cond4} hold. Assume, furthermore, that
for all $u\in \mathbb{R}^{d+d'}$ and for all $r\in \mathbb{R}^{pq}$,
\be 
\widetilde{h}(u, r)= h(u, r)+D r,  
\ee
with $D$ a $pq\times pq$  positive-definite matrix, and
that the structural constants involved in \eqref{cond1}--\eqref{cond4} can be chosen such that
\be\label{kappa_cond}
\kappa = \min(\alpha -a -m_1-\epsilon_1 \gamma ;2( \lambda -m_2))>0
\ee
for some $\epsilon_1>0$, 
where $\lambda$ denotes the smallest eigenvalue of $D$.

Then the system  \eqref{Eq_MSMgen}  possesses a global random attractor which is pullback attracting the deterministic bounded sets of $\mathbb{R}^{d+d'}\times \mathbb{R}^{pq}$. 
\end{thm}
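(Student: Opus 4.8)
The plan is to reduce the stochastic problem to a pathwise, $\omega$-parameterized deterministic one and then to apply the standard random dynamical systems (RDS) machinery: produce a compact pullback-absorbing random set and invoke the existence theorem of Crauel and Flandoli \cite[Theorem 3.11]{CF94}. First I would strip the additive white noise from the $r$-equation of \eqref{Eq_MSMgen} by subtracting the stationary Ornstein--Uhlenbeck process $z$ solving \eqref{Eq_OU_gen} with $A_0 = D$. Under the change of variables $r \leftarrow r - z$, the system becomes the random differential equation \eqref{genMSM_transf}, whose coefficients depend on $\omega$ only through the H\"older-continuous path $t \mapsto z(\theta_t\omega)$. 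This conjugacy is precisely what lets me treat \eqref{genMSM_transf} with classical non-autonomous ODE theory and energy estimates, following \cite{schenk1998random, CLW14_vol1}.

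Second, I would build the energy functional $\widetilde{V} = V + \tfrac12\|r\|^2$ and differentiate it along trajectories of \eqref{genMSM_transf}. The $V$-part is controlled through the $x$- and $y$-equations, as in \eqref{EE1}; hypotheses \eqref{cond1} and \eqref{cond2} supply, respectively, a dissipative bound on the self-interactions $f_1, f_2$ and a $V$-linear bound on the cross-interactions $g_1, g_2$. The $\|r\|^2$-part is controlled by testing the $r$-equation against $r$, as in \eqref{EE2}; here the structural decomposition $h = -Dr + \widetilde{h}$ isolates the genuinely dissipative term $-\lambda\|r\|^2$, with $\lambda$ the least eigenvalue of $D$, while \eqref{cond3} bounds the coupling $\langle \Pi r, \nabla_y V\rangle + \langle \widetilde{h}, r\rangle$. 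Summing the two estimates and absorbing the residual Ornstein--Uhlenbeck forcing $\langle \Pi z, \nabla_y V\rangle$ by an $\epsilon$-Young inequality — using \eqref{cond4} to convert $\|\nabla_y V\|^2$ back into $V$ — yields the single scalar differential inequality $\d\widetilde{V} + \kappa\widetilde{V}\,\d t \le (\widetilde{\beta} + \xi(t,\omega))\,\d t$, with $\kappa$ exactly as in \eqref{defkappa} and a stationary forcing $\xi(t,\omega) = \xi(\theta_t\omega)$ proportional to $\|\Pi z\|^2$.

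Third, under the standing assumption $\kappa > 0$ of \eqref{kappa_cond}, I would apply the pathwise Gronwall comparison lemma to this inequality and pass to the pullback limit $t \to \infty$ along $\theta_{-t}\omega$. The sublinear growth $z(t,\omega)/t \to 0$ almost surely guarantees that the random radius $R(\omega) = \beta/\kappa + \int_{-\infty}^0 e^{\kappa s}\xi(\theta_s\omega)\,\d s$ is finite for almost every $\omega$, so that the sublevel set $\mathfrak{B}_\epsilon(\omega)$ of $\widetilde{V}$ at level $(1+\epsilon)R(\omega)$ is pullback-absorbing for the RDS $\varphi$ generated by \eqref{genMSM_transf}. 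Because $V$ has bounded-set preimages and we work in finite dimensions, $\mathfrak{B}_\epsilon(\omega)$ is almost surely bounded, hence compact. Invoking \cite[Theorem 3.11]{CF94}, the existence of a compact absorbing random set yields a global random attractor for $\varphi$, which I then transport back through the conjugacy $r \mapsto r + z$ to obtain the attractor for \eqref{Eq_MSMgen}.

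The step I expect to be the main obstacle is the second one: extracting a single, uniform-in-$\omega$ dissipation rate $\kappa > 0$ from the competing contributions. The delicate point is that the coupling $\langle \Pi r, \nabla_y V\rangle$, arising from the feedback of $r$ into the $y$-dynamics, is of indefinite sign and must be dominated \emph{simultaneously} by the $V$-dissipation (margin $\alpha$) and by the $\|r\|^2$-dissipation (margin $\lambda$); this is exactly what forces the two-sided minimum in \eqref{defkappa} together with the quantitative requirements $\alpha > a + m_1 + \epsilon_1\gamma$ and $\lambda > m_2$ encoded in \eqref{cond3} and \eqref{kappa_cond}. A second subtlety is that the Ornstein--Uhlenbeck forcing must be driven entirely into the $\omega$-dependent right-hand side $\xi$ without eroding this coercivity, which the $\epsilon$-Young split combined with \eqref{cond4} accomplishes by trading $\|\nabla_y V\|^2$ against $V$. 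Finally, checking that $\xi(\theta_t\omega)$ is stationary and temperate — so that $R(\omega)$ is almost surely finite and $\mathfrak{B}_\epsilon(\omega)$ is a bona fide tempered absorbing set — is the technical hinge linking the pathwise energy estimate to the abstract existence theorem.
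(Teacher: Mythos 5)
Your proposal is correct and follows essentially the same route as the paper's own proof: the Ornstein--Uhlenbeck change of variables with $A_0=D$ leading to \eqref{genMSM_transf}, the energy estimate on $\widetilde{V}=V+\tfrac12\|r\|^2$ combining \eqref{EE1} and \eqref{EE2} with conditions \eqref{cond1}--\eqref{cond4} and an $\epsilon$-Young absorption of the $\langle \Pi z,\nabla_y V\rangle$ term, the resulting inequality with rate $\kappa$ as in \eqref{defkappa}, the pullback-absorbing sublevel set of radius $(1+\epsilon)R(\omega)$ built from the stationarity and sublinear growth of $z$, and finally \cite[Theorem 3.11]{CF94}. Your identification of the two-sided minimum in $\kappa$ as the crux, and of the temperedness of $\xi(\theta_t\omega)$ as the technical hinge, matches the structure of the paper's argument.
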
 

{\bf Interpretation and practical consequences.}  Condition \eqref{cond1} of Theorem \ref{Main_thm} identifies a natural dissipation condition that the self-interactions terms $f_1$ and $f_2$ should satisfy in the absence of coupling between the $x$- and $y$-variables and forcing from the $r$-equation.  Condition \eqref{cond2} and conditions { (\ref{cond3}, \ref{cond4})} indicate what  the cross-interactions between the $x$- and $y$-variables, on the one hand, and the effects 
{of the} forcing by the $r$-variable on the $y$-equation, 
on the other, should satisfy for the system \eqref{Eq_MSMgen} to be pathwise dissipative in a pullback sense \cite{csg11}.  Condition \eqref{kappa_cond} { dictates the appropriate balance between} these combined effects for such dissipativity to occur.

Conditions \eqref{cond1}\textendash\eqref{cond4} 
{may also prove practically} useful in the design of an MSM. For instance, when an appropriate energy $V$ has been identified such that \eqref{cond1} 
{is} satisfied,
the cross-interactions $g_1$ and $g_2$ can be designed according to the constraint
\be\label{Eq_cross_constraint}
  \langle g_1(u), \nabla_{x}  V (u)\rangle + \langle g_2(u), \nabla_{y}  V (u)\rangle=0,
\ee
while conditions  \eqref{cond3} and \eqref{cond4} are 
{easy to satisfy 
when} $V$ is quadratic and $\widetilde{h}$ is linear, { as in \cite{Majda_Harlim2012, MMH}.}

If $V(u)=\frac{1}{2} (\|x\|^2+\|y\|^2)$, it is quite natural  to satisfy \eqref{cond1} simply by seeking $f_i$'s of the form
\be
f_i(x_i)=-A_i x_i +B_i(x_i,x_i)+F_i, \; 
{i = 1,2,}
\ee
with $x_1=x$ and $x_2=y$, 
{while} the linear parts satisfy the dissipation conditions 
\be\label{Eq_dissipA}
\langle A_ix_i, x_i \rangle \geq \nu_i \|x_i\|^2,
\ee
and the bilinear parts satisfy the energy-preserving conditions 
\be\label{Eq_self_energy-preserv}
\langle  B_i(x_i,x_i),x_i \rangle=0, \; 
{i = 1,2.}
\ee

The constraint \eqref{Eq_cross_constraint} 
{becomes therewith}
\be\label{Eq_cross_energy-preserv}
  \langle g_1(x,y),x\rangle + \langle g_2(x,y), y \rangle=0,
\ee
{and, given a linear $\widetilde{h}$ 
and 
bilinear $g_i$'s, 
Eqs.~(\ref{Eq_self_energy-preserv}, \ref{Eq_cross_energy-preserv}) lead} to a class of energy-preserving MSMs considered in \cite{Majda_Harlim2012,MMH}. 
{For this class,} Theorem \ref{Main_thm} ensures the existence of a random attractor provided that the structural constants involved satisfy \eqref{kappa_cond}.  

Clearly, the conditions of Theorem \ref{Main_thm} allow for the design of a { considerably broader class of stable} MSMs. For instance, a closer look at the constraint \eqref{Eq_cross_constraint} allows us to formulate the following corollary.
\begin{corr}\label{Main_corr}
Let us consider a system  \eqref{Eq_MSMgen} for which $d=d'$, and that generates an RDS.  
Assume that there exists a continuously differentiable
map $V: \mathbb{R}^{2d} \rightarrow
\mathbb{R}^+$, 
such that condition \eqref{cond1} is satisfied and 
that pre-images of bounded sets { under $V$} are bounded.

Assume that there exists a 
continuously differentiable, scalar-valued function $G$ on $\mathbb{R}^{2d}$ such that
\bea\label{Eq_antideriv}
& g_1(u)=\nabla_y G(u), \quad
& g_2(u)=-\nabla_x G(u),
\eea
{and that} $G$ is a first integral of the flow associated with the Hamiltonian $V$, i.e. 
{that} the Poisson bracket of $G$ with $V$ 
{vanishes,}  
\be\label{Eq_Poisson}
\{G,V\}=0.
\ee
Then, if conditions { (\ref{cond3}, \ref{cond4})} are satisfied, the conclusion of Theorem \ref{Main_thm} holds.
\end{corr}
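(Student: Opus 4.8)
The plan is to reduce the statement to Theorem~\ref{Main_thm}, whose hypotheses are conditions \eqref{cond1}--\eqref{cond4} together with the threshold \eqref{kappa_cond}. Since \eqref{cond1}, \eqref{cond3} and \eqref{cond4} are assumed outright in the corollary, the only structural condition left to establish is \eqref{cond2} on the cross-interactions $g_1,g_2$. I would in fact obtain \eqref{cond2} in its sharpest possible form, the energy-balance identity \eqref{Eq_cross_constraint}, directly from the Hamiltonian structure \eqref{Eq_antideriv}--\eqref{Eq_Poisson}. The argument is thus a short symplectic computation followed by an appeal to the theorem; there is no substantive analytic obstacle, and the only point demanding care is the bookkeeping of signs in the Poisson bracket.

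Concretely, writing $u=(x,y)\in\mathbb{R}^d\times\mathbb{R}^d$ and using the canonical symplectic structure under which $V$ plays the role of the Hamiltonian, the Poisson bracket reads
\[
\{G,V\}=\langle \nabla_x G,\nabla_y V\rangle-\langle \nabla_y G,\nabla_x V\rangle .
\]
Substituting the gradient representation \eqref{Eq_antideriv}, i.e.\ $g_1=\nabla_y G$ and $g_2=-\nabla_x G$, I would rewrite the left-hand side of the cross-interaction constraint as
\[
\langle g_1(u),\nabla_x V(u)\rangle+\langle g_2(u),\nabla_y V(u)\rangle
=\langle \nabla_y G,\nabla_x V\rangle-\langle \nabla_x G,\nabla_y V\rangle
=-\{G,V\}.
\]
Hence the vanishing Poisson-bracket hypothesis \eqref{Eq_Poisson} is \emph{exactly} equivalent to the energy-balance constraint \eqref{Eq_cross_constraint}, which therefore holds pointwise for every $u\in\mathbb{R}^{2d}$.

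With \eqref{Eq_cross_constraint} in hand, condition \eqref{cond2} holds with $a=b=0$, hence a fortiori for any prescribed $a,b>0$, since $V\geq 0$. All four conditions \eqref{cond1}--\eqref{cond4} of Theorem~\ref{Main_thm} are thereby in force. Choosing $a=0$ is moreover optimal for the dissipativity threshold, which \eqref{kappa_cond} collapses to $\kappa=\min(\alpha-m_1-\epsilon_1\gamma;\,2(\lambda-m_2))$; this is arranged to be positive by the standing choice of $D$ (which fixes $\lambda$) against the constants $m_1,m_2,\gamma$ furnished by \eqref{cond3}--\eqref{cond4}. Invoking Theorem~\ref{Main_thm} then yields the global random attractor, completing the argument. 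The one genuinely new ingredient relative to the energy-preserving bilinear case of \cite{Majda_Harlim2012,MMH} is the observation that replacing ``bilinear and energy-preserving'' by ``gradient-type and Poisson-commuting with $V$'' is precisely what keeps the balance \eqref{Eq_cross_constraint} intact while permitting $g_1,g_2$ to be genuinely nonlinear; the remaining work is entirely a matter of tracking the antisymmetry of the bracket.
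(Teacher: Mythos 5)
Your proof is correct and follows essentially the same route as the paper: expand the Poisson bracket $\{G,V\}$ in canonical coordinates, substitute the gradient representation \eqref{Eq_antideriv} to identify the vanishing bracket \eqref{Eq_Poisson} with the cross-interaction constraint \eqref{Eq_cross_constraint} (hence \eqref{cond2}), and invoke Theorem \ref{Main_thm}. Your opposite sign convention for the bracket is immaterial since it vanishes, and your extra remarks on choosing $a=0$ in \eqref{cond2} and on the threshold \eqref{kappa_cond} only make explicit what the paper leaves implicit.
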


\begin{proof}
The proof of this corollary boils down to 
expanding \eqref{Eq_Poisson} 
in canonical coordinates 
to yield 
\cite[p.~215]{arnol1989mathematical}
\be
\sum_{i=1}^{d} \frac{\partial G}{\partial y_i}  \frac{\partial V}{\partial x_i} - \frac{\partial G}{\partial x_i}  \frac{\partial V}{\partial y_i}=0, 
\ee
which, { by using \eqref{Eq_antideriv},} is nothing else than \eqref{Eq_cross_constraint}. \end{proof}

This corollary shows that, once the self-interactions terms $f_1$ and $f_2$ have been designed to be dissipative, according to \eqref{cond1}, with respect to some energy $V$, the cross-interactions between the main level variable $x$ and the 
next-level hidden variable $y$ can be derived from any constant of motion $G$ of the flow generated by the auxiliary Hamiltonian system 
\bea
& \dot{p}=\nabla_q V(p,q), \quad
& \dot{q}=-\nabla_p V(p,q).
\eea
The determination of such cross-interactions can thus benefit 
from efficient symplectic integrators techniques {\cite{Varadi2003, hairer2006geometric}} that we leave for future research.

 Whatever the level of generality of the MSM formulation, though, the selection of the corresponding allowable
 self- and cross-interactions should be balanced with the constraint of reduction of  the unexplained variance contained in the residuals of the main and subsequent layers, when compared to an MSM written in its original EMR formulation \eqref{Eq_MSM}. This variance-based constraint is also to  
 be balanced against the 
correlation-based criterion formulated in Section \ref{Practical_MZ} below.  The latter criterion relies on a reformulation  of an MSM into a system of stochastic integro-differential equations that is described below,  
a reformulation that allows for a comparison with the optimal closure model 
 one can obtain from a time series of partial observations of a large system; see Section \ref{MSM-MZ}. 

\br\label{rem:EnKF}
{In practice, an ensemble Kalman filter} 
can be used to learn MSMs subject to constraints such as { (\ref{Eq_self_energy-preserv}, \ref{Eq_cross_energy-preserv})} \cite{Majda_Harlim2012,MMH}. 
{In \ref{App_energy}, we describe an alternative approach, 
in which} such constraints can be naturally 
{incorporated into the recursive} minimization procedure described in Section \ref{sec:emr_form}.  
\er

\subsection{MSMs as systems of stochastic integro-differential equations}\label{Sec_MSM_is_integro-stoch}
In this section we 
{show how to rewrite an MSM} as a system of stochastic integro-differential equations  
{like those that arise in the MZ} 
formalism. 
{We restrict ourselves here} to the case 
{of $h$ in \eqref{Eq_decomp_h} being} given by
\bea\label{Eq_cond_MSMisMZ}
& h(u,r)=-D  r +\tilde{h}(x),\\
& g_2(u)=g_2(x),
\eea
i.e. 
{we drop the} dependence 
{of $\tilde{h}$ (respectively~of $g_2$) on $ r$ and $y$ (respectively~on $y$), 
while $D$ is}
still a $pq\times pq$ positive-definite matrix. 

In this case, the integration of the last equation in \eqref{Eq_MSMgen}  gives, for $ r(0)=0$, 
\be\label{Eq_stoch_convol}
 r(t)=\int_{0}^t e^{-(t-s)D} \tilde{h}(x(s))\d s +\int_{0}^t e^{-(t-s)D} \Sigma\;\d W_s,
\ee
where the last integral is understood as a {\it stochastic convolution} in the sense of It$\overline{\mbox{o}}$ \cite[Chap. 5]{DPZ08}.
Denoting by $S(t)$ the flow associated with  $\dot{y}=f_2(y)$, we obtain 
{likewise, for $y_0=0$,}
\be\label{Eq_convol_repre_y}
y(t)=\int_0^t S(t-s)\Big(g_2(x(s))+\Pi r(s)\Big) \d s, 
\ee
with $r(s)$ given by \eqref{Eq_stoch_convol}.  

Let us assume, furthermore, that
\be\label{Eq_g1}
g_1(u)=\beta(x,y)+C y,
\ee
where $C$ is
a $d\times d'$ matrix, 
{while $\beta(x,y)$} does not contain linear terms in the $y$-variable nor homogeneous terms in the $x$-variable.
{Given} these assumptions, 
{we can use Eqs.~(\ref{Eq_stoch_convol}, \ref{Eq_convol_repre_y}) to rewrite} the system of SDEs { \eqref{Eq_MSMgen} 
as the following {\it RDE with retarded arguments}:} 
\be\label{Eq_MSMfunc}
\frac{\d x}{\d t}=f_1(x)+ \mathbb{G}(t,x_t) +\xi (t,\omega),
\ee
with
\be\label{Eq_shift}
x_t(s):=x(t+s), \; -t\leq s\leq 0,
\ee
\be\label{Eq_repeated_convol}
\xi(t,\omega)=C \int_0^t S(t-s')\Big (\Pi \int_{0}^{s'} e^{-(s'-s)D} \Sigma\;\d W_s (\omega) )\Big )\d s'.
\ee

{Here} $\mathbb{G}(t,x_t)$ accounts for (i) the terms {  in \eqref{Eq_g1} that come} from $\beta(x(t),y(t))$, 
where $y(t)$ has been replaced by its expression \eqref{Eq_convol_repre_y}; and (ii) the terms 
$C \big(\Pi \int_{0}^t e^{-(t-s)D} \tilde{h}(x(s))\d s +\int_0^t S(t-s)g_2(x(s))\d s\big)$.  Note that each of these terms 
{is given by the} following type of integral:
\be
\int_0^t F(t,s,x(s))\d s;
\ee
{the latter can be rewritten, by using \eqref{Eq_shift},} as follows: 
\be
\int_0^t F(t,s,x(s))\d s=\int_{-t}^0 F(t,t+s,x (t+s)) \d s=\int_{-t}^0 F(t,t+s,x_t(s)) \d s:=\mathbb{F}(t,x_t),
\ee
{which explains} the functional dependence in \eqref{Eq_MSMfunc} on the past history $x_t$ of $x$.

The abstract reformulation of \eqref{Eq_MSMgen}  
{as a closed equation \eqref{Eq_MSMfunc} that 
describes} the evolution of $x$ 
shows that an MSM  written in a closed form involves a functional dependence on the time history of the observed variables. 
{This important characteristic of a time-continuous EMR} was already 
{highlighted, in a simpler context,} in the Supporting Information of \cite{CKG11}.

 Note that $\xi(t,\omega)$ has a straightforward interpretation when 
{taking into} account the multilayer structure of an MSM. Indeed, if we remember that the $r$-equation in  \eqref{Eq_MSMgen} 
{has} $p$ levels and if we assume, furthermore, that the matrix $D$ is block-diagonal and given by
 \be\label{Eq_Ddiag}
 D=\mbox{diag}(D_1,\cdots,D_p), 
 \ee 
 where each $D_j$, $j = 1,\ldots,p$, is 
a $q \times q$ matrix, 
 it follows that $\xi(t,\omega)$ is obtained, due to the assumption on $\Sigma$, as the following convolution
 \be
  \xi(t,\omega)=C \int_0^t S(t-s)\Pi z_1(s) \d s;
 \ee
{ here} the stochastic process $z_1$ is obtained from  the successive integrations  of the following equations:
\beas\label{Eq_z1}
\d z_p & = -D_p z_p \d t + Q\,\d W,\\
\d z_{p-1} & = -D_{p-1}  z_{p-1} \d t +z_p \d t,\\
\d z_{p-2} & = -D_{p-2}  z_{p-2} \d t +z_{p-1} \d t,\\
& \vdots \hspace{10ex} \vdots\\
\d z_{1}=&-D_{1}   z_{1} \d t +z_2 \d t.
\eeas
In other words, the process $z_1$ can be rewritten in terms of repeated convolutions as follows  
\be\label{xi_t_expanded}
z_1(t)=\kappa_1 \ast \cdots \ast  \kappa_{p-1}\ast z_p(t),
\ee
where the kernels $\kappa_k$ are given by the following exponential matrices; 
\be
\kappa_k(t) = e^{t D_k}, \; 1 \leq k \leq p,
\ee
and the $\kappa_k$ will also be
called memory kernels in the sequel.

In more intuitive terms,  $z_1$ is a stochastic process { that results} from the  propagation of a red noise through the successive linear MSM layers, up to the level just preceding the main level, namely the $y$-equation in \eqref{Eq_MSMgen}. Mathematically, the power spectrum of $z_1$ is red  and its statistics are Gaussian. The interest of system \eqref{Eq_z1} is that it facilitates the simulation of such a ``reddish" noise. This reddish noise is then convoluted with the possibly nonlinear flow associated with $\dot{y}=f_2(y)$ to give rise to the stochastic process $\xi(t,\omega)$, which  may therewith be non-Gaussian.

We  can now summarize the above analysis in the following proposition.
\begin{prop}\label{Main_Prop}
Under the assumptions  of Theorem \ref{Main_thm}, and if the conditions \eqref{Eq_cond_MSMisMZ}, \eqref{Eq_g1} and \eqref{Eq_Ddiag} are satisfied, then any solution of  \eqref{Eq_MSMgen} emanating from $(x_0,0,0)$ satisfies the following RDE with retarded arguments, 
\be\label{Eq_MSMfunc2}\tag{{{\it RDDE}}}
\boxed{
\frac{\d x}{\d t}=\overbrace{f_1(x)}^{(a)}+ \overbrace{\mathbb{G}(t,x_t)}^{(b)} +\overbrace{\xi (t,\omega)}^{(c)},}
\ee
where 
 \be\label{Eq_xi_interp}
  \xi(t,\omega)=C \int_0^t S(t-s)\Pi z_1(s) \d s,
 \ee
$z_1$ solves \eqref{Eq_z1}, and $S(t)$ 
is the flow generated by the vector field $f_2$;
moreover,
\be
 \mathbb{G}(t,x_t)=\int_0^t S(t-s)g_2(x(s))\d s+C \Pi \int_{0}^t e^{-(t-s)D} \tilde{h}(x(s))\d s +\beta(x(t),y(t)),
\ee
with $y(t)$ given by \eqref{Eq_convol_repre_y}.
\end{prop}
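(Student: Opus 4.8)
The plan is to read off \eqref{Eq_MSMfunc2} as the closed-form equation for $x$ obtained by explicitly solving the two hidden-variable equations of \eqref{Eq_MSMgen} and then eliminating $y$ and $r$ from the $x$-equation. Since Theorem \ref{Main_thm} guarantees that \eqref{Eq_MSMgen} generates an RDS with no blow-up, the pathwise solution emanating from $(x_0,0,0)$ exists for all $t\ge 0$, so every convolution representation written below is well defined on each interval $[0,t]$. The whole argument is a sequence of variation-of-constants integrations followed by bookkeeping that sorts the resulting terms into the three groups $(a)$, $(b)$, $(c)$.

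First I would integrate the $r$-equation. Under the structural assumption \eqref{Eq_cond_MSMisMZ}, namely $h(u,r)=-Dr+\tilde h(x)$, the $r$-equation is linear in $r$ with forcing $\tilde h(x(\cdot))$ and noise $\Sigma\,\d W$; its Duhamel formula with $r(0)=0$ produces precisely \eqref{Eq_stoch_convol}, the last summand being an It\^{o} stochastic convolution. Next I would integrate the $y$-equation: denoting by $S(t)$ the flow of $\dot y=f_2(y)$ and using $y(0)=0$ together with $g_2(u)=g_2(x)$, the variation-of-constants representation \eqref{Eq_convol_repre_y} expresses $y(t)$ as the $S$-propagated integral of $g_2(x(s))+\Pi r(s)$, into which I substitute \eqref{Eq_stoch_convol}. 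Finally I would insert $g_1(u)=\beta(x,y)+Cy$ from \eqref{Eq_g1} into the $x$-equation, so that $\d x/\d t=f_1(x)+\beta(x,y)+Cy$; replacing $y$ by its representation splits $Cy(t)$ into a deterministic part driven by the history of $x$ and a purely stochastic part. Collecting the stochastic convolution carried through $\Pi$ and $S$ yields exactly $\xi(t,\omega)$ as in \eqref{Eq_repeated_convol}/\eqref{Eq_xi_interp}, while the deterministic convolutions against $g_2(x(\cdot))$ and $\tilde h(x(\cdot))$, together with $\beta(x(t),y(t))$, are gathered into $\mathbb{G}(t,x_t)$. The functional dependence on the past history $x_t$ is made explicit by the change of variable in \eqref{Eq_shift}, which rewrites any $\int_0^t F(t,s,x(s))\,\d s$ as $\int_{-t}^0 F(t,t+s,x_t(s))\,\d s$.

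To obtain the cascade interpretation \eqref{Eq_xi_interp} of $\xi$, I would then use the block structure. Under \eqref{Eq_Ddiag} the matrix exponential $e^{-(t-s)D}$ is block-diagonal with blocks $e^{-(t-s)D_j}$, and, because $\Sigma$ injects the noise $Q\,\d W$ only in its last $q$ rows, the stochastic convolution $\Pi\int_0^{s'}e^{-(s'-s)D}\Sigma\,\d W_s$ is exactly the bottom component $z_1$ of the triangular linear cascade \eqref{Eq_z1}. Solving that cascade layer by layer and reading the successive Duhamel integrals as repeated convolutions gives \eqref{xi_t_expanded}, after which convolving with $S$ reproduces \eqref{Eq_xi_interp}. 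This identifies $z_1$ as a reddened Gaussian noise propagated through the linear $r$-layers, and $\xi$ as its image under the (possibly nonlinear) flow $S$.

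The main obstacle is the legitimacy of the variation-of-constants representation \eqref{Eq_convol_repre_y} when $f_2$ is genuinely nonlinear: for a nonlinear flow $S(t)$ the identity $y(t)=\int_0^t S(t-s)\big(g_2(x(s))+\Pi r(s)\big)\,\d s$ is not the classical Duhamel formula and must be read either under the hypothesis that $f_2$ is linear, so that $S(t)$ is the associated linear semigroup and Duhamel applies verbatim, or in a suitable mild sense; I would state the linear-$f_2$ case explicitly and treat the general case as the mild formulation underlying \eqref{Eq_convol_repre_y}. The remaining work is purely organizational, matching the three collected groups of terms to $f_1(x)$, $\mathbb{G}(t,x_t)$ and $\xi(t,\omega)$, and checking that the hypotheses on $\beta$ (no linear-in-$y$, no homogeneous-in-$x$ terms) are only needed to keep $Cy$ as the sole $y$-linear contribution that gets eliminated, so that everything else folds cleanly into $\mathbb{G}$.
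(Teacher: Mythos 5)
Your proposal follows essentially the same route as the paper's own derivation: Duhamel integration of the $r$-equation to obtain \eqref{Eq_stoch_convol}, the flow representation \eqref{Eq_convol_repre_y} for $y$, substitution of \eqref{Eq_g1} into the $x$-equation with the terms sorted into $f_1$, $\mathbb{G}$ and $\xi$, and the block-diagonal cascade \eqref{Eq_z1} identifying the stochastic convolution with $z_1$. Your caveat about the variation-of-constants formula when $f_2$ is genuinely nonlinear is well taken --- the paper asserts \eqref{Eq_convol_repre_y} without comment, so your explicit handling (linear $f_2$, or reading \eqref{Eq_convol_repre_y} as the defining mild formulation) is, if anything, more careful than the original.
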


Equation~\eqref{Eq_MSMfunc2} shows then that an MSM --- with cross-interactions between the observed and hidden variables subject to the conditions of Proposition \ref{Main_Prop} --- 
decomposes the dynamics of the observed variables into (a) nonlinear self-interactions, embedded in (c) a reddish background, plus (b) {\it state-dependent correction terms} represented by convolution integrals. The reddish background (c) stands for the cross-interactions and self-interactions that are not accounted for, respectively, by the convolution terms in (b) and by the nonlinear terms in (a). This noise term also accounts for the lack of knowledge of the full initial state due to partial observations. Stated otherwise, the deterministic terms in (a) provide a Markovian contribution to $\d x/\d t,$ while the terms in (b) constitute a non-Markovian contribution  {\it via} the past history of $x$, as $t$ evolves, and the (c)-term represents the fluctuations that are not modeled by the terms in (a) and (b);  the (c) term also accounts for the uncertainty in the knowledge of the full initial state, due to partial observations \cite{Chorin_Hald-book}. 

We have thus clarified that any system \eqref{Eq_MSMgen}, 
subject to conditions like those of Proposition \ref{Main_Prop}, 
resembles the closure models derived by applying the MZ 
formalism; see \cite[Eq.~(6)]{Chorin_MZ} and \cite{Chorin_MZ, Chorin_al06, Chorin_Stinis06} for further details.  A discrete formulation of an MSM --- such as proposed by the original formulation of an EMR {in \cite{kkg05, kkg09_rev} ---}
presents, nevertheless, substantial advantages in practice, since such a discrete system is in general much easier to integrate numerically than the system of stochastic integro-differential equations \eqref{Eq_MSMfunc2}.  This is particularly true in instances where the memory kernel $\mu_k$ in \eqref{xi_t_expanded} decay slowly, in the sense that the observed variables $x$ evolve on a time scale comparable to that of the decorrelation time of such kernels.\footnote{In such a case, the direct numerical integration of \eqref{Eq_MSMfunc2} becomes prohibitive, due to the cost of evaluating the integral terms. Such numerical difficulties have  limited, so far, the application of the MZ formalism in the derivation of efficient closure models of partially observed  high-dimensional systems for which the determination of 
{$\mathbb{G}$} becomes a non-trivial task in the case of the so-called  intermediate-range memory effects; see \cite{Stinis06}. Nevertheless, in  the case of long-range memory effects, {the MZ framework was successfully applied 
to} the Euler or Burgers equations {by deriving a so-called $t$-model} \cite{hald_stinis, stinis_euler}.} Such a situation is expected to occur when the separation of time scales between the observed and unobserved variables  is not as sharp as required, for instance, in applying stochastic homogenization techniques 
\cite{Majda2001}; see however the recent works   \cite{MS11,GM13} for milder assumptions.

The observation that an MSM can be recast into a system of stochastic integro-differential equations that resembles the 
{ GLE of the MZ} 
formalism raises a natural question with respect to our closure problem ($\mathfrak{P}$) formulated in Section \ref{sec:emr_form}, {namely to} which extent {does} an MSM 
provide {a good} 
approximation of the GLE predicted by the 
{MZ} formalism? The next section sets up the mathematical framework to address this problem, 
followed by the formulation of a simple correlation-based criterion 
in  Sec.~\ref{Practical_MZ} to 
{solve it.} 

Our criterion is based on Proposition \ref{Main_Prop} and, in particular, on the representation of $\xi(t,\omega)$ provided by \eqref{Eq_xi_interp};
 it helps provide information on the degree of approximation of the GLE by a given MSM, in terms of { the correlation between the residual noise term (c) in the \eqref{Eq_MSMfunc2} and the observed variables $x(t)$.} 
Such a criterion will turn out to be useful in applications, given the fact that the GLE constitutes the optimal closure model that can be achieved from an infinite time series of partial observations, as pointed out by Lemma  \ref{Main_lemma} below.


\section{Generalized Langevin equation (GLE) for optimal closure from a time series}\label{MSM-MZ}

In this section, we assume that the scalar field $u(t_k, \xi_j)$ in $d$ dimensions, as expanded in \eqref{equ:expand}, 
is the spatial coarse-graining of a discrete field  $\mathcal{U}_{ \mathcal{I}}:=\{u(t_k,\xi_j)\}_{j\in \mathcal{I}}$ in $n$ dimensions.
Typically, card($\mathcal{I}$)=$n \gg d$ and an expansion similar to \eqref{equ:expand} holds in $n$ dimensions, that is:  
\be
u(t_{k},\xi_j)=\sum_{i=1}^{n} y_i(t_k) E_i(\xi_j),\quad j\in { \mathcal{I},} \; 
{ k = 1, \ldots ,N.}
\ee
We also assume 
that the evolution of  $y$ is governed by the { ODE} system 
\be\label{Eq_main}
\dot{y}=\R(y).
\ee

The goal of this section is to show how the  
{GLE from the MZ} formalism can be 
{theoretically} derived from a time series. { This mathematical derivation lays} the foundations for seeking the Markovian part of an MSM by regression methods in practice, { as discussed and applied in Secs.~\ref{Practical_MZ}--\ref{sec_pop_dyn}.}

To simplify the presentation, we assume that the vector field $\R$ is  
{\it continuously differentiable} on $Y:=\mathbb{R}^{n}$, endowed with the basis { $\mathcal{B}' = \{E_i: i = 1, \ldots, n\},$} such that the flow $\{T_t\}$ associated with it is well-defined for all $t$ and possesses 
a compact global attractor $\mathcal{A} \subset Y$ 
\cite{Tem97}. 
We 
assume, furthermore,  
that $\{T_t\}$ possesses an invariant probability measure $\mu$, which is {\it physically relevant} \cite{eckmann_ruelle, csg11}, {\it i.e.}:
\be\label{Eq_phys_rev}
\underset{T\rightarrow \infty}\lim \frac{1}{T} \int_0^{T}  \varphi(T_t( y)) \d t =\int_{\mathcal{A}} \varphi( y) \d \mu ( y),
\ee
for almost all  $y \in Y$ (in the sense of Lebesgue measure) and for any observable $\varphi \in L^1_{\mu}(Y).$  Recall that, 
like all measures invariant under $T_t,$ an invariant measure that satisfies \eqref{Eq_phys_rev} is supported by the global attractor $\mathcal{A}$; see, for instance, \cite[Lemma 5.1]{chekroun_glatt-holtz}.

We briefly outline now how 
the MZ formalism helps derive
an abstract {\it closure model} 
for the description of  the dynamics of $x=P y$, where $P$ is the projection from $Y$ onto $X:=\mathbb{R}^d$ and the latter is endowed with the basis { $\mathcal{B} = \{E_i: i = 1, \ldots, n\},$} $d \ll n$.  By a  closure model we mean a model that describes the evolution of $x(t)=Py(t)$ as a function of the $x$-variable only, plus some possible 
forcing terms that model the information lost by  
applying $P$,  {\it i.e.}, due to the partial character of the available observations. 
Our approach thus follows 
\cite{Chorin_MZ, Chorin_al06}, who rely on the use of conditional expectation with respect to a meaningful invariant measure such as the $\mu$ introduced above. 
This framework will allow us, 
 further below, to prove the main result of this section, formulated as Lemma \ref{Main_lemma}. 
Practical consequences of this Lemma for MSMs are discussed 
 in Section \ref{Practical_MZ} below.

For convenience, we denote by $v: Y\rightarrow X$  the projection $P$ so that, in particular, 
\bes
v_i( y):=(P y)_i\,, \;\;\; i = 1, \ldots ,d.
\ees
By differentiating  $v_i(T_t( y))$ with respect to time, we obtain
\beas
\frac{\partial }{\partial t} v_i (T_t( y)) & = \nabla_{ y} (v_i(T_t  y)) \cdot \frac{\d T_t( y)}{\d t} \\
       &  = \nabla_{ y} (v_i(T_t  y))\cdot \R (T_t( y)) = \Ld v_i(T_t( y));
 \eeas
here $\Ld$ is the Lie derivative, acting on continuously differentiable functions $h$, along the vector field $\R$ given by:
\be\label{Eq_L}
\Ld h ( y):=  \nabla h( y) \cdot \R( y),
\ee
Note that, by definition, $v_i(T_t( y_0))$ is 
just the $i^{\mathrm{th}}$-component $x_i(t)$ of $x(t; y_0)$, namely the projection onto $X$ of the solution of \eqref{Eq_main} emanating from $ y_0 \in Y$.

By introducing --- at this point only formally --- the time-dependent family of {\it Koopman operators}\footnote{For further details about Koopman semigroups and operators, see \cite{budivsic2012applied,Chekroun_roux, cornfeld1982ergodic,lasota1994chaos} and references therein. For this article, it suffices to note that $U_t$ describes the action of the flow $T_t$ on observables $u:Y\rightarrow X$.}: 
\be\label{Def_Koop}
(U_t u)( y_0):=u(T_t  y_0), \;  y_0 \in Y,
\ee
defined for observables $u:Y\rightarrow X$ { that live} in some appropriate functional space\footnote{Such a space could  be chosen to be for instance $D_p=\{ u\in L^{p}_{\mu} (Y;X) \; | \; Au:=\lim_{t\rightarrow 0} \ t^{-1}(U_t u -u) \textrm{ exists}\}$ for some $ p \in [1,\infty]$, where the limit is taken  in the sense of strong convergence \cite{brezis_book,EN00}.  We do not consider in this article the delicate question of the choice of functional spaces 
{ that} characterize the mixing properties of the flow $T_t$. Such considerations require typically 
spaces that take into account the stable and unstable manifolds of the attractor, when the latter supports a { Sinai}-Bowen-Ruelle measure; see, for instance, \cite{butterley2007smooth} in the case of Anosov flows. { For a rigorous treatment in the context of Hamiltonian systems, see \cite{GKS04}.}

},
the evolution of $x(t; y_0)=v(T_t  y_0)$ 
{ is} governed by
the following system of linear, hyperbolic PDEs { with non-constant coefficients in $n+1$} variables:
\be\label{Eq_Liouv}
\frac{\partial }{\partial t} U_t v( y_0)=\mathfrak{L} (U_t v)( y_0), \; y_0 \in Y,
\ee
where $\mathfrak{L} \Psi:=(\mathcal{L} \Psi_1,\dots,\mathcal{L} \Psi_n),$ with $\mathcal{L}$ given by \eqref{Eq_L}.
Within the appropriate functional setting,  it can be shown that $\{U_t\}_{t\geq 0}$ forms a genuine semigroup, {\it i.e.} it describes not only the dynamics of $v$ but of other observables as well, and its generator is given by $\mathfrak{L}$, {\it i.e.}  $U_t=e^{t\mathfrak{L}}$. 
{In} other words, $\mathfrak{L}$ can be interpreted as the ``rate of change" of  $U_t$; see {\it e.g.} \cite[Chap. 7.6]{lasota1994chaos}, or \cite{butterley2007smooth} for a more advanced treatment in the { dual} case of Perron-Frobenius semigroups.

The Liouville-type equation \eqref{Eq_Liouv} can be rewritten in a more  convenient form for our purposes; in particular, we will see that the existence of an invariant measure $\mu$ satisfying \eqref{Eq_phys_rev}  plays an essential role in connecting the MSMs introduced in the previous section with the 
{MZ} formalism.  Let $Z$ be the complement of $X$ in $Y$, i.e.  
\be
Y=X\oplus Z,  
\ee
and let $\Psi : Y\rightarrow X$ be a continuous function.
The decomposition of $Y$ allows us to split any  $ y\in Y$  as the sum $x+z,$ with $x\in X$ and $z \in Z$; { here both $x$ and $ z$ are} uniquely determined by the projection $v: Y\rightarrow X$ and its complementary projection, $\mathrm{Id}_Y -v$, respectively.

The existence of  an invariant measure $\mu$ allows us next to define the {\it conditional expectation}\footnote{Note that $\int_{Z} \Psi( y) \d \mu_{x}(z)$ { in Eq.~\eqref{Def_cond_expec}}
is finite by assuming { that} $\Psi$ continuous, since the support of the invariant measure $\mu$, supp$(\mathcal{\mu})$ is compact. Actually, by the Fubini theorem \cite{Rudin}, it suffices to assume that $\Psi \in L^1_{\mu}(Y;X)$ for this integral to be well defined.} 
{of $\Psi$ for each $v(x$),}
\be\label{Def_cond_expec}
\mathbb{E}[\Psi | v] (x):=\int_{Z} \Psi( y) \d \mu_{x}(z). 
\ee
{Here} $\mu_{x}$ is the probability measure on { the unobserved factor space} $Z$ obtained by disintegration of $\mu$ above $x$, {\it i.e.}, for all Borel sets $B$ and $F$ of $Z$ and $X$, respectively, we have that:
\be\label{Eq_disint}
\mu(B\times F)=\int_{F} \mu_{x} (B) \d \mathfrak{m} (x),
\ee
where $\mathfrak{m}$ is the {\it push-forward} of the measure $\mu$ by $v$, {\it i.e.}  $\mathfrak{m}(F)=\mu(v^{-1}(F)),$ for any Borel set $F$ of $X$.

The existence of $\mu_{x}$ such that \eqref{Eq_disint} holds is ensured by the {\it disintegration theorem of 
probability measures}; see for instance \cite[Section 10.2, pp. 341--351]{dudley_book} or \cite[Chapter 5]{kallenberg_book}\footnote{See also \cite[\S 4]{CF94} and \cite{Arnold98,csg11,Crauel02} for disintegration of probability measures arising in { RDS} theory;
in this theory, 
the probability measure $\mathfrak{m}$ on the right-hand side of \eqref{Eq_disint} is typically replaced by the probability measure $\mathbb{P}$ associated with
the driving system.}. The probability measure $\mu_{x}$ can be interpreted as reflecting the statistics of the unobserved variables $ z$ when $x$ has been observed; see \cite{Chek_al13_RP} for further details.

\begin{rem}\label{Rem_ave}
From the definitions of $\mathfrak{L}$ and $v$, we see  that $\mathfrak{L}v$ corresponds  to the vector field 
\bea
v\circ \R: \; & X\rightarrow X,\\
& y\mapsto(R_1( y),....,R_d( y)),
 \eea
where $\R$ is the  vector field governing the evolution of the full system \eqref{Eq_main}. Thus  $\mathfrak{L}v$ is 
{ but} the first $d$ components of the vector field $\R$ and \eqref{Eq_Liouv} corresponds then to the functional formulation of
\be\label{Eq_MZ0}
\frac{\d x}{\d t} =v(\R(x +z)),  \; x \in X,\;  z\in Z,
\ee
where $Y=X\oplus Z$.

The truncated vector field $v \circ \R$  is still a function of $ y$ and, in particular, it depends on the unobserved variables $ z$.  It is then reasonable to seek the vector field in $X$  that best approximates $\mathfrak{L}v$ in a least-square sense, {as weighted by the invariant measure $\mu$, i.e. in $L^2_{\mu}(Y;X)$.} In other words, we seek an $X$-valued function of the
{observed, $d$-dimensional} $x$ only,  which best approximates the $X$-valued function $\mathfrak{L}v$ of the { full, $n$-dimensional} $y$. It is exactly this approximation that provides the conditional expectation $\mathbb{E}[\mathfrak{L} v| v]$ corresponding 
to the vector field
\be\label{Eq_reduced_vec_field}
\overline{v\circ \R}(x):=\int_{Z} v(\R(x+z)) \d \mu_{x} (z), \; x \in X,\;  z\in Z.
\ee
The averaging with respect to the unobserved variables $ z$ that occurs in Eqs.~\eqref{Def_cond_expec} and \eqref{Eq_reduced_vec_field} becomes therewith intuitively clear.
\end{rem}

Recalling now the stated purpose of deriving a 
closure model for $x(t)$, Remark \ref{Rem_ave} above leads naturally to decompose $v\circ \R$ into its averaged part, given by \eqref{Eq_reduced_vec_field},
and a fluctuating part, 
\be\label{Eq_decompR}
v(\R(x +z))=\overline{v\circ \R}(x) +(v(\R(x +z))-\overline{v\circ \R}(x)).
\ee

The parametrization of the fluctuating part $v(\R(x +z))-\overline{v\circ \R}(x)$ as a function of the $x$-variable is at the core of the MZ-formalism.  This task is achieved through the perturbation theory of semigroups \cite{EN00}. 
Leading up to this task, let us first note that, since $\mathfrak{L}$ and $U_t$ commute, $\mathfrak{L} U_t=U_t \mathfrak{L} $ \cite{Chorin_MZ}, we can  rewrite \eqref{Eq_Liouv}
as 
\begin{equation*}
\frac{\partial}{\partial t} U_t v= U_t \mathfrak{L} v; 
\end{equation*}
{ this, in turn, can be} rewritten as:
\be\label{Eq_MZ1}
\frac{\partial }{\partial t} U_t v = 
U_t \mathbb{E}[\mathfrak{L} v| v]+U_t ( \mathfrak{L} v- \mathbb{E}[\mathfrak{L} v| v]).
\ee

Equation~\eqref{Eq_MZ1} is the functional formulation of \eqref{Eq_MZ0}, once \eqref{Eq_decompR} has been applied. The interest of this formulation is that 
the {\it variation-of-constants formula}, in its Miyadera-Voigt form \cite[{ Section 3c}]{EN00}\footnote{ 
The Miyadera-Voigt form of the variation-of-constants formula is also known as the Dyson formula in the physics literature \cite{Chorin_MZ}.},
yields a decomposition of  $U_t$ into two terms; as we will see, these two terms
have useful interpretations in statistical mechanics \cite{Mor65,nakaji58,PR61,Zwa60,Zwa64}.

The decomposition of $U_t$ is achieved by considering the 
operator $B$, acting on appropriate observables $\Psi:Y\rightarrow X$,
\be\label{Eq_B}
B\Psi :=\mathbb{E}[\mathfrak{L}\Psi | v],
\ee 
as a perturbation of the operator $\mathfrak{L}$, 
and writing
\be
A\Psi= \mathfrak{L} \Psi-  \mathbb{E}[\mathfrak{L}\Psi  | v].
\ee

Using the notation $V_t$ for the semigroup generated by $A$, 
the above-mentioned variation-of-constants formula 
leads (formally) to:
\be\label{Eq_MZ_decomp}
U_t \Psi =V_t \Psi+\int_{0}^t U_{t-s} B V_s \Psi \d s.
\ee

It follows that the evolution of an observable $\Psi:Y\rightarrow X$ under $V_s$  in \eqref{Eq_MZ_decomp} is described by the following 
{ PDE in} $n+1$ variables: 
\be\label{Eq_MZ_ortho}
\frac{\partial }{\partial t} V_t \Psi=A\Psi = \mathfrak{L}V_t\Psi -\mathbb{E}[\mathfrak{L} V_t \Psi | v].
\ee

\begin{rem}\label{Rem_ortho}
Note that  
\be
\frac{\partial}{ \partial t} \mathbb{E}[V_t \Psi | v] = \mathbb{E}[\frac{\partial }{\partial t}(V_t \Psi) | v] = \mathbb{E}\Big[\mathfrak{L}  V_t\Psi -\mathbb{E}\big[\mathfrak{L}  V_t \Psi | v\big]\Big| v\Big]=0, 
\ee
so that, if $\Psi$ is orthogonal to the space spanned by functions of $X$ = Im($v$) alone, then $\mathbb{E}[V_t \Psi| v]=0$ for all $t$. For { this} reason, equation \eqref{Eq_MZ_ortho} is known as providing the {\it orthogonal dynamics} in the MZ terminology.
\end{rem}

According to this remark, by taking 
\be
\Psi=\mathfrak{L} v - \mathbb{E}[\mathfrak{L} v| v],
\ee
we have thus that $V_s \Psi $ in \eqref{Eq_MZ_decomp} evolves in the subspace that is orthogonal to  the space spanned by functions of $X$. The term $B V_s \Psi$  corresponds to the average vector field on $X$ given by $\overline{(V_s \Psi) \circ \R}$, where the average is taken over $Z$ as in  \eqref{Eq_reduced_vec_field}, so that $\int_{0}^t U_{t-s} B V_s \Psi \d s$ is a function that depends on $x\in X$ only. If $x$ is given by the time-dependent function $v(T_t   y_0)$, then $\int_{0}^t U_{t-s} B V_s \Psi \d s$ is a function of the past values of $x$,  
i.e., a  memory term.

By applying \eqref{Eq_MZ_decomp}  with $\Psi=\mathfrak{L} v - \mathbb{E}[\mathfrak{L} v| v]$ and using the expression of $B$ given in \eqref{Eq_B}, 
the term $U_t ( [\mathfrak{L} v- \mathbb{E}[\mathfrak{L}  v| v])$ in \eqref{Eq_MZ1}
may be 
rewritten accordingly, and we arrive thus at the 
following {\it generalized Langevin equation (GLE)} :
\be\label{Eq_MZ_GLE}\tag{{{\it GLE}}}
\boxed{
\frac{\partial }{\partial t} U_t v = \mathcal{R}(U_t v )+\int_{0}^t U_{t-s} \mathcal{G}(v;\eta_s)\d s +\eta_t.}
\ee
Here 
\be
 \mathcal{R}(v)=\mathbb{E}[\mathfrak{L} v| v], \quad 
\mathcal{G}(v;\eta_s)=\mathbb{E}\big[\mathfrak{L}\eta_s \big| v\big],
\ee
and
\be 
\eta_t=V_t (\mathfrak{L} v - \mathbb{E}[\mathfrak{L} v| v]);
\ee
see \cite{Chorin_MZ} for further details. As explained above, the integral term in \eqref{Eq_MZ_GLE} constitutes the non-Markovian contribution to the evolution of $U_t v$, and thus of $v(T_t   y_0)$; this contribution depends on $U_s $ for  the past interval $0 \leq s \leq t$.

The term $\eta_t$ is a source of fluctuations related to the partial knowledge $x_0$ of the full initial state $y_0=x_0+z_0$.
Recall that the conditional expectation $\mathbb{E}[\Psi | v]$, as defined in  \eqref{Def_cond_expec}, of a function $\Psi$ in $L^2_{\mu}(Y;X)$ can be seen as  the orthogonal projection $\Ps$  onto the space of $X$-valued functions that depend only on the $x$-variable; see Remark \ref{rem_proj}  below. Thus, by introducing the complementary projector 
\be
\Qs=(\mathrm{Id}_{L^2_{\mu}(Y;X)} -\Ps), 
\ee
we obtain from \eqref{Eq_MZ_ortho} that $\eta_t$ solves the following initial value problem:
\be\label{Eq_MZ_ortho2} \left\{
\begin{array}{l@{}} 
{\displaystyle {\frac{\partial }{\partial t} \eta_t} = \Qs\mathfrak{L} \eta_t \quad \text{in }Y,} \\
{\displaystyle {\eta_0 (x_0,z_0)=v\circ \R(x_0+z_0)-\overline{v \circ \R} (x_0), \quad x_0 \in X, \; z_0 \in Z.}}
\end{array}
\right. \ee 
In other words, $\eta_t$ gives the evolution in time, according to the {\it orthogonal dynamics}, of a 
deviation $\eta_0 (x_0, z_0):=v\circ \R(x_0+z_0)-\overline{v \circ \R} (x_0)$ at time $t = 0$, with respect to the conditional expectation \eqref{Eq_reduced_vec_field}.

Since $z_0$ is distributed according to the disintegrated measure
$\mu_{x_0}$,  the fluctuating term $\eta_t$ can be then  interpreted as an  $X$-valued random variable. Since $\mathbb{E}[\eta_0 | v]=0$, we deduce from Remark \ref{Rem_ortho} that $\mathbb{E}[\eta_t | v]=0$ for all $t>0$, {\it i.e.} the conditional expectation of $\eta_t$ remains zero as $t$ evolves, so that $\eta_t$ is uncorrelated with any function of $v$ \cite{Chorin_Hald-book, chorin2000optimal}.

\begin{rem}\label{rem_proj}
 Recall that we may regard $\mathbb{E}[\Psi | v]$ as the orthogonal projection  of $\Psi$ belonging  to 
\be
L^2_{\mu}(Y; X):=\{g:Y\rightarrow X, \,\, \mathrm{measurable \, and \, such \, that} \;  \int_{Y} \| g( y)\|^2 \d \mu ( y) <\infty \},
\ee
onto the space of functions of $X$, since for 
all function $f: X\rightarrow X$ such that $f\circ v \in L^2_{\mu}(Y;X)$,
\be\label{Eq_best_app}
\mathbb{E}[\|\Psi-\mathbb{E}[\Psi| v]\|^2] \leq \mathbb{E} [\| \Psi-f \circ v\|^2],
\ee
where the expectation $\mathbb{E} (\Phi)$ is taken here with respect to $\mu$, that is: 
\be\label{Def_expec}
\mathbb{E} (\Phi)=\int_{\mathcal{A}} \Phi( y) \d \mu ( y), \; \Phi \in L^1_{\mu}(Y).  
\ee
\end{rem}

Equations \eqref{Eq_MZ_GLE} are fewer in number than in \eqref{Eq_main}, i.e.~$d \ll n$, but this advantage is outweighed
by the need to find the fluctuating term $\eta_t$ as a solution of the orthogonal dynamics \eqref{Eq_MZ_ortho2},  along with its 
requisite statistical properties, in order to simulate \eqref{Eq_MZ_GLE} accordingly. What equation \eqref{Eq_MZ_GLE} does provide is a theoretical ``master equation," which can serve as a basis for the design of various practical closure strategies.  

With this purpose in mind, the following fundamental lemma 
shows that actually the 
{reduced, Markovian} vector field $\mathcal{R}(v)$ in \eqref{Eq_MZ_GLE}  can, in principle, be approximated from a time series, when the latter represents partial observations drawn from a physical invariant measure.

\begin{lem}\label{Main_lemma}
Assume that the main system \eqref{Eq_main} possesses an invariant measure $\mu$ that satisfies the ``physicality condition'' of \eqref{Eq_phys_rev}. Let  $v: Y\rightarrow X$ denote the projection onto $X=\mathrm{span}\{E_1,...,E_d\}$, and let $\mathcal{E}(v,\mu)$ be the closure in $L^2_{\mu}(Y;X)$ of the set of functions $f:X\rightarrow X$ such that $f\circ v \in L^2_{\mu}(Y;X).$ 

Then, for almost all initial data $ y_0 \in Y$, 
{in the sense of Lebesgue measure on $Y$,}
\be\label{Eq_key}
\boxed{
\underset{f \in \mathcal{E}(v,\mu)}{\mathrm{argmin}} \Big(\underset{T\rightarrow \infty}\lim \frac{1}{T} \int_0^{T}\Big \| \frac{ \mathrm{d} x}{ \mathrm{d} t}-f(x(t; y_0)) \Big \|^2 \mathrm{d} t \Big)= \mathbb{E}[\mathfrak{L} v |v]
}
\ee
 holds,
where $\mathbb{E}[\mathfrak{L} v |v]$ is the conditional expectation  defined in  \eqref{Def_cond_expec},  
{ while} $x(t; y_0):=v(  y(t; y_0))$ with $ y(t; y_0)$ denoting the solution $ y(t; y_0)$  of \eqref{Eq_main} emanating from $ y_0$.
\end{lem}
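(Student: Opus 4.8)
The plan is to recognize the time-averaged cost functional as a Birkhoff-type average of a single observable on $Y$, to evaluate its limit through the physicality condition \eqref{Eq_phys_rev}, and then to identify the resulting variational problem with an orthogonal projection in the Hilbert space $L^2_{\mu}(Y;X)$. First I would use the computation preceding \eqref{Eq_L}, which gives $\mathrm{d}x/\mathrm{d}t=\mathfrak{L}v(y(t;y_0))$ along every trajectory, so that for a fixed $f\in\mathcal{E}(v,\mu)$ the integrand becomes $\varphi_f(y):=\|\mathfrak{L}v(y)-f(v(y))\|^2$, a genuine observable on $Y$. Since $\R$ is $C^1$ and $\mu$ is supported on the compact attractor $\mathcal{A}$, the field $\mathfrak{L}v=v\circ\R$ is bounded on $\mathcal{A}$, hence $\mathfrak{L}v\in L^2_{\mu}(Y;X)$ and $\varphi_f\in L^1_{\mu}(Y)$. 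Writing $J_T(f;y_0):=\frac{1}{T}\int_0^T\varphi_f(y(t;y_0))\,\mathrm{d}t$ and applying \eqref{Eq_phys_rev} to $\varphi=\varphi_f$ yields, for almost every $y_0$,
\begin{equation*}
\lim_{T\to\infty} J_T(f;y_0)=\int_{\mathcal{A}}\varphi_f\,\mathrm{d}\mu=\|\mathfrak{L}v-f\|^2_{L^2_{\mu}}=:J(f).
\end{equation*}

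The functional $J$ is exactly the squared distance in $L^2_{\mu}(Y;X)$ from the fixed element $\mathfrak{L}v$ to the closed subspace $\mathcal{E}(v,\mu)$; by the projection theorem it has a unique minimizer, namely the orthogonal projection of $\mathfrak{L}v$ onto $\mathcal{E}(v,\mu)$, which is precisely $\mathbb{E}[\mathfrak{L}v\,|\,v]$ by the best-approximation property \eqref{Eq_best_app} of Remark \ref{rem_proj}. Thus, were the $\mathrm{argmin}_f$ free to commute with the limit $T\to\infty$, the identity \eqref{Eq_key} would follow immediately.

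The delicate point, and the main obstacle, is exactly this interchange: the physicality condition produces, for each \emph{fixed} $f$, a full-measure set of $y_0$ on which $J_T(f;y_0)\to J(f)$, but the exceptional null set depends on $f$ and $\mathcal{E}(v,\mu)$ is uncountable. To secure a single good set of initial data valid for all competitors simultaneously, I would exploit the separability of $L^2_{\mu}(Y;X)$. Fixing a countable dense family $\{f_n\}\subset\mathcal{E}(v,\mu)$, I would apply \eqref{Eq_phys_rev} to the countable collection of observables $\|\mathfrak{L}v\|^2$, $\langle\mathfrak{L}v,f_n\rangle$ and $\langle f_n,f_m\rangle$, all of which lie in $L^1_{\mu}$ by Cauchy--Schwarz; intersecting the associated null sets leaves a full-measure set $Y_0$ on which every one of these Birkhoff averages converges to its $\mu$-integral. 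Expanding the square of $J_T(f_n;y_0)$ into these three averages then gives $\lim_T J_T(f_n;y_0)=J(f_n)$ for all $n$ and all $y_0\in Y_0$.

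It remains to pass from the dense set $\{f_n\}$ to the whole subspace $\mathcal{E}(v,\mu)$. For $y_0\in Y_0$ the convergence of the empirical Gram data $\langle f_n,f_m\rangle$ along the trajectory shows that $f\mapsto\lim_T J_T(f;y_0)$ is a continuous quadratic functional on $\mathcal{E}(v,\mu)$, governed by the same inner product and linear term as $J$; as it agrees with the continuous functional $J$ on the dense family $\{f_n\}$, the two are identical on $\mathcal{E}(v,\mu)$. A routine triangle-inequality estimate, $J_T(f;y_0)^{1/2}\le J_T(f_n;y_0)^{1/2}+\|f-f_n\|_{L^2(\mu_T^{y_0})}$ with $\mu_T^{y_0}:=\frac{1}{T}\int_0^T\delta_{y(t;y_0)}\,\mathrm{d}t$ the empirical measure, makes this precise, the only care being that elements of the closure are defined $\mu$-almost everywhere and must be treated through their genuine approximants rather than evaluated directly along a single orbit. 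Granting this, on $Y_0$ the two functionals share the same unique minimizer, which by the projection theorem and \eqref{Eq_best_app} equals $\mathbb{E}[\mathfrak{L}v\,|\,v]$; this establishes \eqref{Eq_key}.
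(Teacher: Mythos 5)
Your proposal is correct and follows essentially the same route as the paper's proof: for each fixed $f$ the integrand is recognized as the observable $\varphi_f(y)=\|\mathfrak{L}v(y)-f(v(y))\|^2$, the physicality condition \eqref{Eq_phys_rev} converts the time average into the $\mu$-average $\|\mathfrak{L}v-f\circ v\|^2_{L^2_{\mu}}$, and the projection theorem together with the best-approximation property \eqref{Eq_best_app} identifies $\mathbb{E}[\mathfrak{L}v\,|\,v]$ as the unique minimizer. The one place you go beyond the paper is the $f$-dependent null set: the paper's proof silently takes the infimum over the uncountable family $\mathcal{E}(v,\mu)$ on both sides of an almost-everywhere identity, whereas you make the interchange explicit via a countable dense family and the quadratic expansion of $J_T$; this is a genuine refinement, although your final extension step (controlling $\|f-f_n\|_{L^2(\mu_T^{y_0})}$ for $f$ outside the dense family) would itself require Birkhoff convergence for observables not in your countable collection, so that step remains at roughly the same level of informality as the paper's own argument.
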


\begin{proof} The proof  uses the two facts that (i) $\mu$ is a physical invariant measure in the sense of \eqref{Eq_phys_rev}, and that (ii) $\mathbb{E}[\Ld v |v ] $ is the projection, in $L^2_{\mu}(Y; X)$, of the rate of change of  $U_t v$ onto $X$.
Let $\varphi:Y\rightarrow \mathbb{R}$ denote the observable defined by $\varphi( y)= \| \mathfrak{L} v ( y)- f(v( y)) \|^2$.  

Note that {the functional} $\varphi$ thus defined lives in $L^1_{\mu}(\mathcal{A})$. To see this, first note that --- from \eqref{Eq_L} and the definition  of $\mathfrak{L}$ --- it is not difficult  to show that there exists $C>0$ such that
\be
\| \mathfrak{L} v ( y)\|\leq C \|\R( y)\|, \; 
\text{for all}\, y \in \mathcal{A}.
\ee
This bound, in turn, implies  that  $\mathfrak{L} v$ lives in $L^2_{\mu} (Y;X)$ {and,} since  $\R$ is continuous, the global attractor $\mathcal{A}$ is compact \cite[Def. 1.3.]{Tem97} and the support of $\mu$ is contained in $\mathcal{A}$; see {\it e.g.} \cite[Lemma 5.1]{chekroun_glatt-holtz} for this latter point. Combined with the assumption on $f$, we deduce 
that $\varphi \in L^1_{\mu}(\mathcal{A})$ and {that it constitutes therewith} an admissible observable.

Applying \eqref{Eq_phys_rev} along with the definition of the Koopman operator in \eqref{Def_Koop} 
{yields}
\be
\underset{T\rightarrow \infty}\lim \frac{1}{T} \int_0^{T}  \| \mathfrak{L} U_t v( y)-f(U_t v ( y))\|^2 \d t =\int_{\mathcal{A}} \| \mathfrak{L} v ( y)- f(v( y)) \|^2 \d \mu ( y),
\ee
for almost all $ y \in Y$. From  \eqref{Eq_Liouv} and \eqref{Def_expec}, we obtain next that, for almost all $ y \in Y$,
\beas
\underset{f \in \mathcal{E}(v,\mu)}{\inf} \Big(\underset{T\rightarrow \infty}\lim \frac{1}{T} \int_0^{T}  \|\frac{ \mathrm{d} x}{ \mathrm{d} t}-f(x(t; y))\|^2 \d t \Big) & = \underset{f \in \mathcal{E}(v,\mu)}{\inf} \Big(\int_{\mathcal{A}} \| \mathfrak{L} v ( y)- f(v( y)) \|^2 \d \mu ( y)\Big),\\
& = \mathbb{E}\Big[\big\|\mathfrak{L} v-\mathbb{E}\big[\mathfrak{L} v\big| v\big]\big\|^2\Big],
\eeas
where \eqref{Eq_best_app} has been used to get the last equality.

Finally, by applying in the Hilbert space $L^2_{\mu}(Y;X)$ the classical {projection theorem} 
onto a closed convex set  \cite[Theorem 5.2]{brezis_book}, we  conclude that $\mathbb{E}\big[\mathfrak{L} v\big| v\big]$ is the unique minimizer of 
\be
\int_{\mathcal{A}} \| \mathfrak{L} v ( y)- f(v( y)) \|^2,
\ee
 and, therefore, of 
 \be 
 \underset{T\rightarrow \infty}\lim (1/T) \int_0^{T}  \| \mathfrak{L} U_t v( y)-f(U_t v ( y))\|^2 \d t,
 \ee
  when $f\in \mathcal{E}(v,\mu).$ Formula \eqref{Eq_key} is thus proved.
\end{proof}

\br\label{rem:relax} 
{One may want to assume the existence of a bounded non-wandering set $\Lambda$\cite{Chek_al13_RP}, instead of a global attractor ${\mathcal A}$}, for the system \eqref{Eq_main}  and thus relax condition \eqref{Eq_phys_rev} 
to hold only for $y$ in the system's basin of attraction $\mathcal{B}(\Lambda)$. {In this more general case,} the conclusion of Lemma \ref{Main_lemma} still holds for $y_0 \in \mathcal{B}(\Lambda)$, 
{i.e.,} for time series that relax towards the corresponding statistical equilibrium $\mu$ whose support is contained in $\Lambda$. MSMs can still be efficiently derived in such a context as 
{shown in Sec.~\ref{sec_pop_dyn}.} 
\er 

\section{MSMs as closure models from time series: the $\eta$-test}
\label{Practical_MZ}

A classical approach in MZ modeling consists of assuming something reasonable about the statistics of  the unobserved variables --- {\it e.g.}, on the basis of previous observations --- and then turn tothe formulation of Eq.~\eqref{Eq_MZ_GLE} to determine an analytic or numerical approximation of the terms  
that appear on the right-hand side; the resulting prediction methods based on the 
MZ formalism go under the name of {\it optimal prediction}  \cite{Chorin_MZ}. The main objective in such an approach is to calculate  the conditional expectation based on the assumptions made regarding the statistics of  the unobserved variables. When no analytical formula is available for the probability measure that describes the distribution of the unobserved variables, empirical estimation methods are typically used; these empirical estimations  
often rely on a large set of trajectories integrated over a short time interval.

For instance, maximum likelihood estimation techniques \cite{lehmann1998theory} can be used to find an approximation $\nu$ of the density of the unobserved variables as Gaussian mixtures; see \cite{Stinis06} for an application of such techniques to the non-Hamiltonian case of the Kuramoto-Sivashinsky equation. In certain cases, the Markovian term in \eqref{Eq_MZ_GLE} can be computed explicitly by relying on the estimated $\nu$, while other assumptions --- such as the {\it short-memory approximation} or the  {\it $t$-model} \cite{hald_stinis,stinis_euler} --- can then be used to deal with the non-Markovian term in \eqref{Eq_MZ_GLE} through simulations of the full system; see \cite{Chorin_Hald-book, Stinis06}. 

{As mentioned already in Sec.~\ref{ssec:outline}, the} difference in viewpoint between this classical approach and an approach based on averaging along trajectories, such as supported by Lemma  \ref{Main_lemma}, is similar to the Eulerian versus the Lagrangian viewpoint in fluid mechanics. {In this analogy,} the 
approach advocated in this article corresponds to the Lagrangian viewpoint: 
measurements are made along trajectories and 
the numerical construction of an MZ model 
{relies on} fewer initial states, but requires longer runtimes, for large models, or longer data records, for observational data. Recalling that the MZ formalism is built on the  decomposition of the Koopman semigroup given in \eqref{Eq_MZ_decomp}, an approach based on averaging along trajectories is  furthermore consistent with other Koopman operator techniques developed recently for the spectral analysis of time series \cite{budivsic2012applied}.

The MSM approach to stochastic inverse modeling, as described above,  
 can thus provide an efficient way of deriving approximation of \eqref{Eq_MZ_GLE} in practice, by relying exclusively on available time series, even when no prior knowledge about the full model is available. A key point to 
achieving this, 
based on available time series alone, 
is the quality of the approximation by the vector field $f_1$ in \eqref{Eq_MSMfunc2} of the genuine Markovian contribution in \eqref{Eq_MZ_GLE}, on the one hand, and the quality of approximation by the terms collectively labeled (b) in Eq.~\eqref{Eq_MSMfunc2} of the non-Markovian contribution $\int_{0}^t U_{t-s} \mathcal{G}(v;\eta_s)\d s$ in \eqref{Eq_MZ_GLE}, on the other. Recall that the former  
model the self-interactions among the observed variables and that the latter 
model the cross-interactions between the observed and unobserved variables that occurred at 
{ the past times $s$, 
$0 \le s \le t$.}

{\it A priori} error estimates that 
{may} be useful in practice are difficult to establish at this level of generality. However, the remaining term (c) in Eq.~\eqref{Eq_MSMfunc2} --- when compared to the ``residue" $\eta_t$ in \eqref{Eq_MZ_GLE}  --- can serve to formulate a correlation-based criterion to help 
{determine how well an MSM given by \eqref{Eq_MSMfunc2} approximates} 
\eqref{Eq_MZ_GLE}. 
Indeed, by construction of the GLE, the noise term $\eta_t$ is 
uncorrelated with the observed time series $x(t)$, 
{due to the orthogonality property of the dynamics in} \eqref{Eq_MZ_ortho2}; see also Remark \ref{Rem_ortho}. Therefore, the corresponding term (c) in \eqref{Eq_MSMfunc2} can naturally serve for testing whether an MSM derived from the time series $x(t)$ alone provides a good approximation of the GLE.

{The resulting} {\it $\eta$-test} can be summarized as follows:  
\begin{itemize}
\item[($\mathfrak {T}$)]  
The more  Pearson's correlation coefficient\footnote{{ This coefficient is} defined as the covariance of the two variables divided by the product of their standard deviations.} between $\xi(t,\omega)$ in \eqref{Eq_MSMfunc2} and the observed variable $x(t)$ is close to zero, the 
better the approximation of the 
 GLE~\eqref{Eq_MZ_GLE} associated with $x(t)$. 
\end{itemize}
In particular, to assert that an MSM model constitutes a good approximation of the   
\eqref{Eq_MZ_GLE} associated with a given multivariate time series,  the noise terms labeled (c) in Eq.~\eqref{Eq_MSMfunc2} should not exhibit any $x$-dependence, since the $x$-dependence of the fluctuating terms is supposed to be taken into account { solely} in the non-Markovian terms (b). 

Interestingly, the multilayer structure of an MSM provides a simple way to compute the (c)-term $\xi(t,\omega)$ of Eq.~\eqref{Eq_MSMfunc2}, as provided by the representation formula  \eqref{Eq_xi_interp} in Proposition \ref{Main_Prop}. In that respect, the process $z_1$ can be easily simulated by integration of \eqref{Eq_z1}, 
followed by an integration of 
\be
\dot{\xi}=f_2(\xi)+\Pi z_1.
\ee
{Doing so} provides a natural estimation of $\xi(t,\omega)$, up to multiplication by the matrix $C$ from \eqref{Eq_g1}, and thus allows for an easy estimation of 
Pearson's correlation coefficient 
{in} the $\eta$-test ($\mathfrak {T}$) formulated above.

It is important to keep in mind that the $\eta$-test 
{only provides information} on the $x$-dependences 
{of this residue} that are not fully 
{captured} by the (a)- and (b)-terms in \eqref{Eq_MSMfunc2}. 
{Hence} this test is more useful as an indicator in the design of the relevant constitutive parts of \eqref{Eq_MSMgen} such as the { $f_i$'s and the $g_i$'s, $i = 1,2$,} rather than providing an 
{ultimate criterion} to assess the modeling performance of a given  MSM.  

Situations may indeed arise where the corresponding Pearson's correlation coefficient is not necessarily close to zero, while the corresponding MSM still performs very well in 
simulating 
the main statistical properties of the observed variables. In other words, although some $x$-dependences may not be fully resolved by the { deterministic 
terms, both Markovian and non-Markovian, 
of} an MSM, the contributions of these terms to 
simulating the main observed statistics could prove to be negligible. 

Such a situation   is identified in Sec.~\ref{sec:Climate_ex} below for a conceptual stochastic climate model in the presence of weak time scale separation; 
see panels (g) and (h) of Figs.~\ref{fig:pdf1d} and \ref{fig:pdf2d}, and panel (d) of Fig.~\ref{fig:emr_acf}  in the following section.  At the same time, the quality of reproduction of the observed statistics is improved as 
Pearson's correlation coefficient gets closer to zero; see panels { (a)--(f) of} Figs.~\ref{fig:pdf1d} and \ref{fig:pdf2d}, and panels { (a)--(c) of} Fig.~\ref{fig:emr_acf}. Part of the reason for this success lies in the energy-conserving { EMR} formulation discussed at the end of  Sec~\ref{Sec_MSMs_att} 
and in Appendix~\ref{App_energy}.  
Once adopted, this formulation makes the 
practical, discrete-time EMR consistent with the full model's structural features, as layed out in Sec.~\ref{sec:Climate_ex}.


\section{A conceptual stochastic climate model: Numerical results}
\label{sec:Climate_ex}

\subsection{ Model formulation}
\label{ssec:clim_model}
 We illustrate here the MSM approach to stochastic inverse modeling, as described 
in Secs.~\ref{sec:emr_form}--\ref{Practical_MZ}, by deriving  
a closure model from partial observations of a slow-fast system in which only the nominally slow variables are observed. 
The time-scale separation between the nominally slow and fast variables ranges from a strong 
to a weak separation; 
 in the latter case, some of the slow and fast variables actually evolve on a similar time scale. This example will show, in particular, the usefulness of the $\eta$-test introduced in Sec.~\ref{Practical_MZ}, as discussed at the end of this section.  
For simplicity and for the sake of reproducibility of the results, {we use here} a simple conceptual  climate model 
proposed in \cite{Majda_etal2005}; see also  \cite{Franzke_etal07, Majda_etal2008}. Three 
features of this four-dimensional model are  
 of interest with respect to our closure problem ($\mathfrak{P}$) 
of Sec.~\ref{sec:emr_form}.  
First, the model is stochastic, which introduces {\it de facto} noisy observations. Second, the variables that will be taken as unobserved, carry  in fact most of the variance in this model; { while} this is not 
the case in { actual} observations of atmospheric low-frequency variability (LFV) \cite{Kim_Ghil'93a, Kim_Ghil'93b}, { it does present} a challenging difficulty to the MSM approach.  Third, the model exhibits natural energy-preserving constraints that will be taken into account in the MSM formulation below.

The model obeys the following system of SDEs:
 \begin{subequations}\label{toy_model}
 \begin{align}
 & {\rm d}{x_1} = \{   -x_2 (L_{12} +a_1x_1+a_2x_2) - d_1x_1 +F_1 + {\bf L_{13}y_1+ b_{123}x_2y_1 + c_{134}y_1y_2} \}{\rm d}t, \label{toy_1} \\
 & {\rm d}{x_2} = \{     x_1 (L_{21} +a_1x_1+a_2x_2) - d_2x_2 +F_2 + {\bf L_{24}y_2+    b_{213}x_1y_1} \}{\rm d}t,\label{toy_2} \\
 & {\rm d}{y_1} = \{ -L_{13}x_1+ b_{312}x_1x_2 +c_{341}y_2x_1+ F_3-\frac{\gamma_1}{\epsilon}y_1\}{\rm d}t + \frac{\sigma_1} {\sqrt{\epsilon}}dW_1 \label{toy_3} \\
 & {\rm d}{y_2} = \{ -L_{24}x_2 + c_{413}y_1x_2+ F_4-\frac{\gamma_2}{\epsilon}y_2\}{\rm d}t + \frac{\sigma_2}{\sqrt{\epsilon}}dW_2. \label{toy_4}
 \end{align}
 \end{subequations}
The parameter $\epsilon$  explicitly controls the time-scale separation between the model's slow and fast variables, namely the $(x_1, x_2)$-variables, and the $(y_1, y_2)$-variables, respectively. These variables  are 
coupled  linearly, through the skew-symmetric terms, as well as nonlinearly;  the nonlinear coupling involves the triple coefficients $b_{ijk}$ and $c_{ijk}$.  The linear and nonlinear coupling can be understood as additive and multiplicative noise forcing the slow-mode evolution, respectively.  

The model set-up here follows \cite{Franzke_etal07}, namely:
\beas
& b_{123}=b_{213}=0.25, \, b_{312}=-0.5,\\
& c_{134}=c_{341}=0.25, \, c_{413}=-0.5;
\eeas
\beas
& L_{12}=L_{21}=1, \, L_{24}=-L_{13}=1,\\ 
& a_1=-a_2=1.0, \, d_1=0.2, \, d_2=0.1; 
\eeas
and
\beas
& F_1=-0.25, \, F_2=F_3=F_4=0;\\ 
& \gamma_1=\gamma_2=1, \mbox{ and }  \sigma_1=\sigma_2=1. 
\eeas
Note that, in agreement with the energy-conserving constraints of the EMR formulation in Eqs.~\eqref{nlcons1}--\eqref{poscons} 
of \ref{App_energy}, this toy model has a quadratic nonlinear part that conserves energy;  for example, the triple coefficients sum to zero, i.e.,
\bes
b_{123}+b_{213}+b_{312}=0 \mbox{ and }  c_{134}+c_{341}+c_{413}=0,
\ees
as in Eq.~\eqref{nlcons3}, while the linear part has pairwise skew-symmetric terms given by the values of the coefficients $L_{12}, L_{21}, L_{24}$ and $L_{13}$, as in Eq.~\eqref{skewcons}. Furthermore, the negative-definite contributions of $\gamma_1$, $\gamma_2$, $d_1$ and $d_2$, as in Eq.~\eqref{poscons}, ensure the model's dissipativity, cf.~\cite{schenk1998random} and \cite[Sec.~5.4]{GhCh87}. 

Finally,  certain quadratic terms are absent from the model, as per Eq.~\eqref{nlcons1}, while the values for $a_1$ and $a_2$ are set so that Eq.~\eqref{nlcons2} is satisfied. The terms in bold characters in the first two equations will be discussed further below.

\subsection{Numerical results}
\label{ssec:numerical}

We integrated Eqs.~\eqref{toy_1}--\eqref{toy_4} for $10^4$ time units by using the fourth-order Runge--Kutta scheme for the deterministic part and the Euler-Maruyama scheme for the stochastic part, with a time step of $\Delta t=0.001$. 
Only the slow model variables $x_1$ and $x_2$ are stored here, with a sampling rate of $0.05$ time units. We applied 
an energy-preserving version of \eqref{Eq_EMR}, by using the constraints of Eqs.~\eqref{nlcons1}--\eqref{poscons}, in order to model the corresponding multivariate  time series of $(x_1,x_2)$. 

As the scale-separation parameter $\epsilon$ increases, and the scale separation 
decreases {therewith} in the model, the decorrelation times steadily increase for the fast modes and decrease for the slow modes; {$x_1$ and $y_1$, in particular, exhibit the} most pronounced changes; { see Figs.~\ref{fig:data_acf}(a)--(d).} For $\epsilon=1.5$, 
the autocorrelation functions for $y_1$ and $x_1$ become very similar, {cf. Fig.~\ref{fig:data_acf}(d),} and so there is no longer any formal separation of scales.   
\begin{figure}
\vspace{-2ex}\centering
\includegraphics[height=0.65\textwidth, width=.85\textwidth]{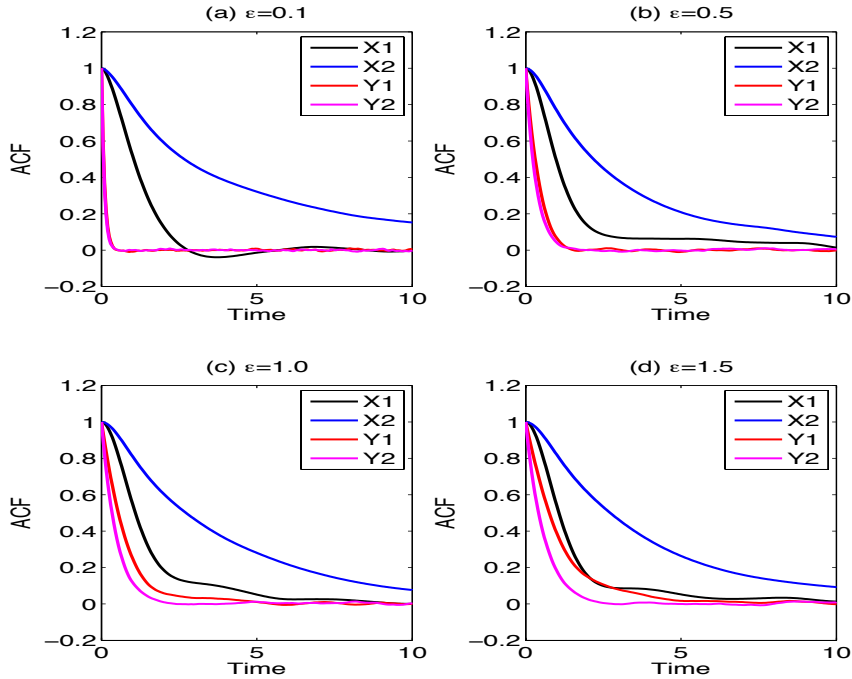}
\vspace{-5ex}
\caption{\small
Autocorrelation functions of the model variables for Eq.~\eqref{toy_model}. Panels (a)--(d) correspond to the $\epsilon$-values $0.1, 0.5, 1.0$ and $1.5$; { see the} color coding for the variables $(x_1, x_2, y_1, y_2)$ in the legend of each panel.
\label{fig:data_acf}}
\end{figure}
Note that, in this case, the main level of our resulting EMR model for evolving $x_1$ and $x_2$ does not include explicitly  the unresolved, linear and nonlinear interactions marked in bold in  Eqs.~(\ref{toy_1}, \ref{toy_2}). These  contributions to the dynamics of the observed variables  
{$(x_1, x_2)$, which result from their} cross-interactions with the 
unobserved variables 
{$(y_1, y_2)$,} need to be properly parameterized by the EMR { model's}  
hidden variables $\{r_t^{(m)}: 0 \le m \le p\}$, in order to reproduce 
the statistical behavior of 
{$(x_1, x_2)$} in terms of their 
{PDFs} and autocorrelations.

The energy-preserving EMR model, fitted solely on the $x_1$ and $x_2$ time series, has two additional levels ($p=2$) for all values of $\epsilon$, according to the stopping criterion of \ref{App1}. Figures \ref{fig:pdf1d} and \ref{fig:pdf2d} present a comparison of {the one- and two-dimensional (1-D and 2-D) PDFs,} respectively, for slow modes obtained by the energy-preserving  EMR and the full model. The two figures show clearly that the energy-preserving   EMR model reproduces
quite accurately both the univariate and bivariate PDFs.

\begin{figure}
\vspace{-2ex}\centering
\includegraphics[height=0.8\textwidth, width=.9\textwidth]{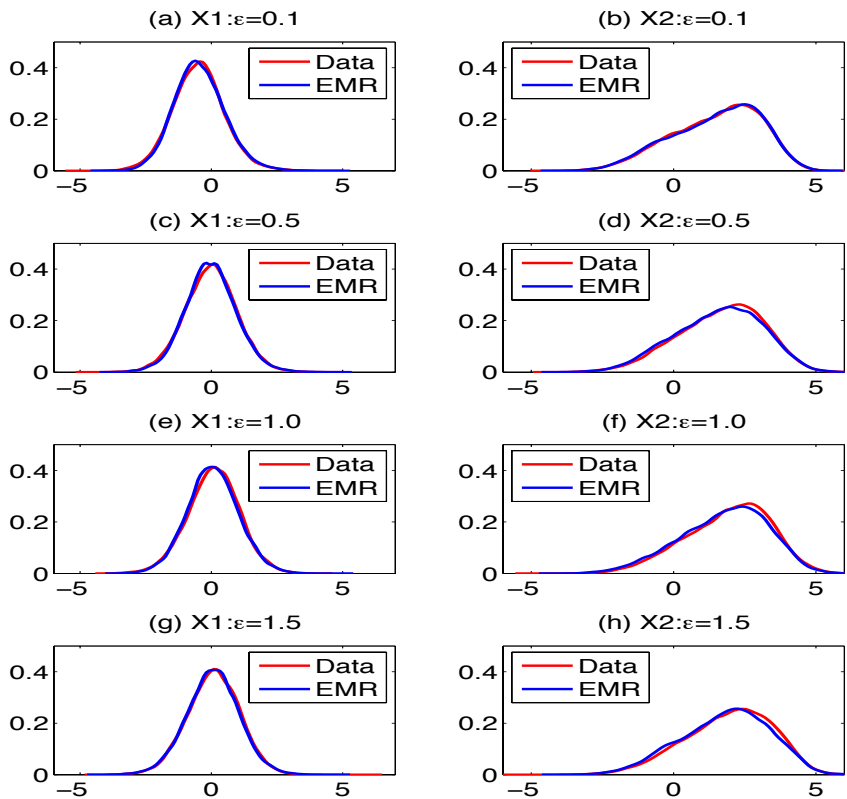}
\vspace{-7ex}
\caption{\small
One-dimensional (1-D) probability density functions (PDFs) of the resolved variables $(x_1, x_2)$, as modeled by the EMR-reduced  model (blue line) {\it vs.} { the simulation} by the full coupled system in Eq.~\eqref{toy_model} (red line); the panels differ, from top to bottom, by the model's varying scale separation: {$\epsilon = 0.1$ in panels (a) and (b); 0.5 in panels (c) and (d); 1.0  in panels (e) and (f);  and 1.5  in panels (g) and (h).} Left column (a, c, e, g) PDFs for $x_1$; and right column (b, d, f, h) PDFs for $x_2$.
\label{fig:pdf1d}}
\end{figure}
\begin{figure}
\vspace{-2ex}\centering
\includegraphics[height=0.7\textwidth, width=1\textwidth]{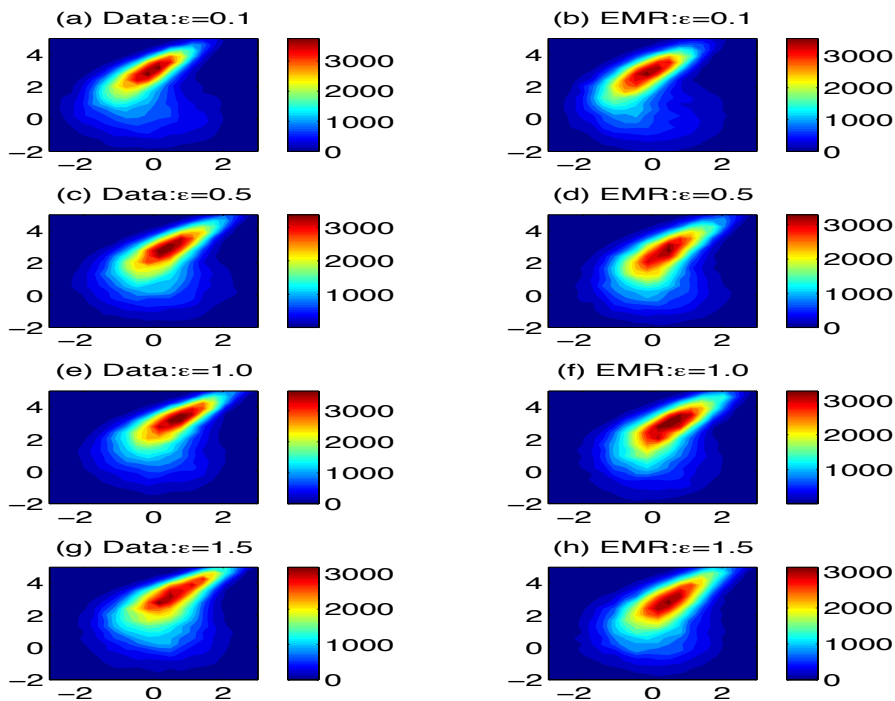}
\vspace{-8ex}
\caption{\small
Same as Fig.~\ref{fig:pdf1d}, but for the two-dimensional (2-D) PDFs of the resolved variables $(x_1, x_2)$, as simulated by the full coupled system  of Eq.~\eqref{toy_model} (left column) {\it vs.} the { 2-D} PDFs of $(x_1,x_2)$, as modeled by the EMR-reduced model (right column).
}\label{fig:pdf2d}
\end{figure}
\begin{figure}
\includegraphics[height=0.5\textwidth, width=.6\textwidth]{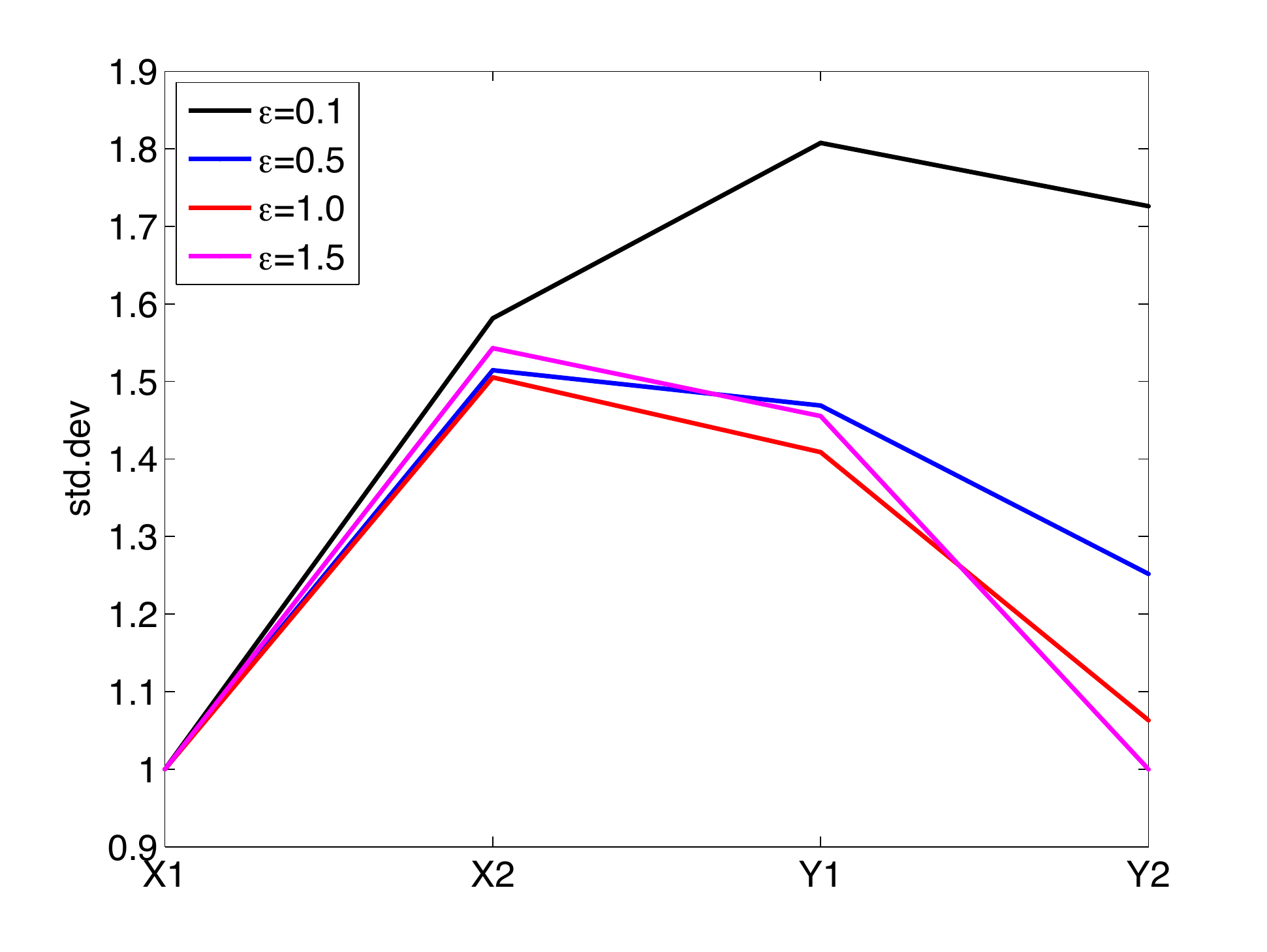}
\vspace{-2ex}
\caption{\small
Standard deviation of the model variables of the toy model governed by Eq.~\eqref{toy_model}, for different values of the scale-separation parameter, namely $\epsilon = 0.1, 0.5, 1.0$ and $1.5$. The slow modes $(x_1, x_2)$ and fast modes $(y_1, y_2)$ are on the abscissa; see legend for color code. 
}\label{fig:data_var}
\end{figure}
\begin{figure}[htbp]
\vspace{-2ex}\centering
\includegraphics[height=0.6\textwidth, width=.85\textwidth]{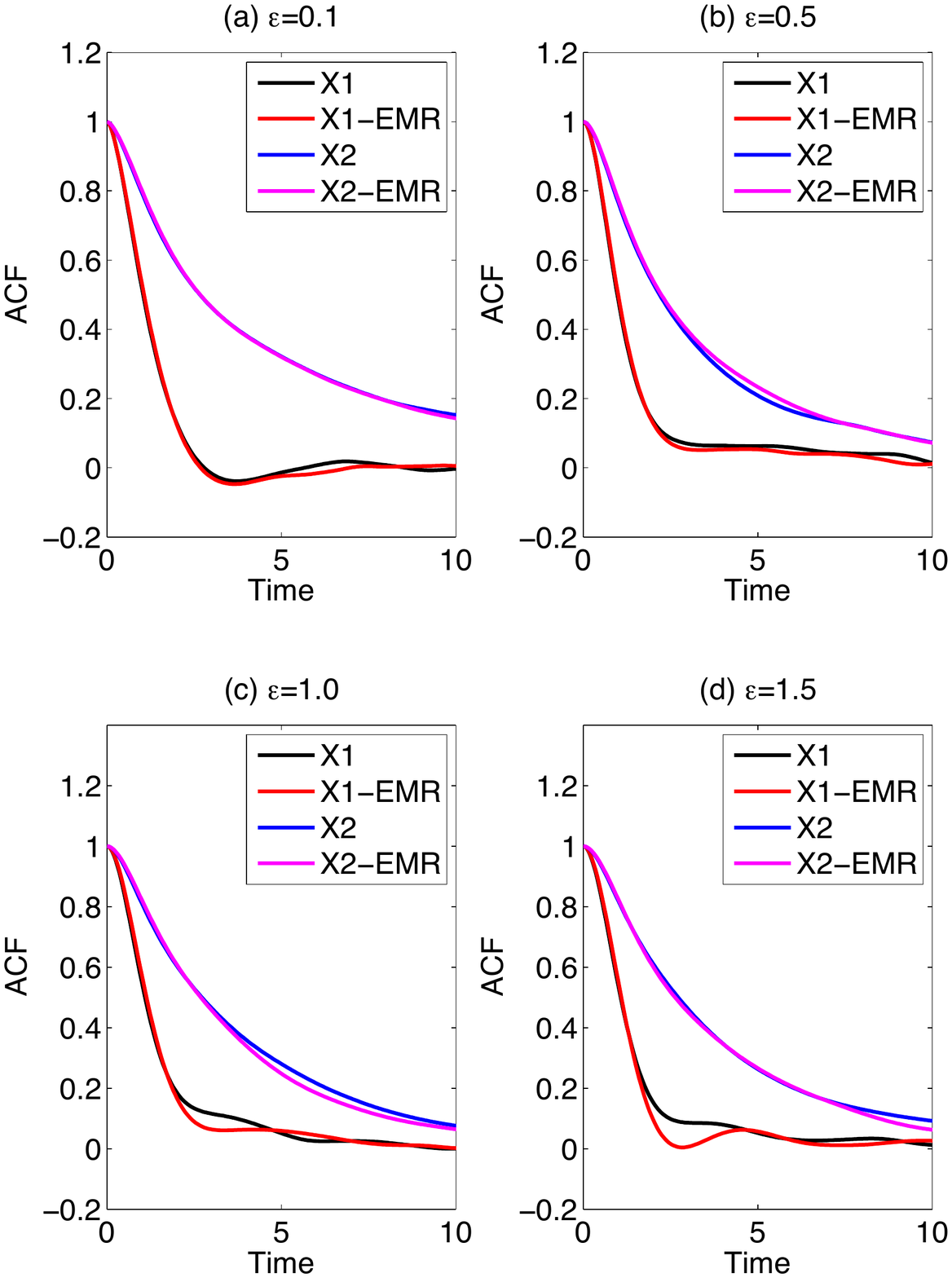}
\vspace{-6ex}
\caption{\small Autocorrelation functions of the resolved variables $(x_1, x_2)$ in the full dynamics {\it vs.} the EMR-reduced dynamics, with varying scale separation $\epsilon = 0.1 - 1.5$ in panels (a)--(d). Color coding appears in each panel legend: the black and blue lines are for the full  model's first and second component, while 
red and purple are for the EMR-reduced model's first and second component.
}
\label{fig:emr_acf}
\end{figure}

Figures~\ref{fig:data_acf} and \ref{fig:data_var}  show how {the autocorrelations and the variance,} respectively, of the slow and fast variables change when the scale-separation parameter $\epsilon$ varies between $0.1$ and $1.5$. For  $\epsilon=0.1$ {(black line in Fig.~\ref{fig:data_var}),} most of the variance is carried by the fast modes, {whose decorrelation time is much smaller in this case than that of the} slow modes, while $x_2$ has the slowest {autocorrelation decay, cf. Fig.~\ref{fig:data_acf}(a).} For $\epsilon \ge 0.5$ {(colored lines in Fig.~\ref{fig:data_var}),} the variance {in this model} is distributed roughly equally between slow and fast modes, while observations of atmospheric LFV show the slow, barotropic modes, on the 10--100-day scale, to be in fact considerably more energetic than the synoptic-scale, baroclinic modes of 1--10 days \cite{Kim_Ghil'93a, Kim_Ghil'93b}.

The 1-D PDF is mostly Gaussian for $x_1$ { (left column of Fig.~\ref{fig:pdf1d}) and strongly non-Gaussian for $x_2$ (right column of Fig.~\ref{fig:pdf1d}); 
neither changes} noticeably with scale separation.  The 2-D PDFs in Fig.~\ref{fig:pdf2d} are quite non-Gaussian, {all four of them,} and they do not change much in shape or orientation with $\epsilon$ either.  
Moreover, {Fig.~\ref{fig:emr_acf} shows that the energy-preserving  EMR model} reproduces with very high accuracy changes in {the autocorrelation function of $x_2$, the model's} slowest mode, 
for all values of $\epsilon$. {The energy-preserving  EMR model does equally well for $x_1$, as long as the scale separation is sufficiently large, i.e. for $\epsilon=0.1$ and $0.5$; see { Figs.~\ref{fig:emr_acf}(a, b).} For $\epsilon=1.0$ and $1.5$, there is no pronounced scale separation 
in the full model, and thus there is not much difference in decorrelation time between the slow mode $x_1$ and the fast mode $y_1$. Hence it is not surprising that the autocorrelation for $x_1$ is reproduced somewhat less accurately, but still reasonably well, {cf. Figs.~\ref{fig:emr_acf}(c,d).}

In summary, in {the partially observed situation studied in this section,} a discrete-version of an MSM given by \eqref{Eq_EMR} and subject to the energy-preserving constraints described in \ref{App_energy} performs very well when the variability of the discarded variables is much faster than that of the slow variables. The EMR model performance is still remarkably good when the variability of the excluded variables is similar in amplitude and time scale to that of the retained variables. 

{Since the number of levels in the EMR model is $p=2$, the total number of EMR variables is six, and thus it} formally exceeds the total number of degrees of freedom of the full model Eq. \eqref{toy_model}, namely four.
This is the price to pay, though, for successful orthogonal, multilevel parameterization of the unresolved processes that were explicitly excluded from the main level of the reduced model. 

As discussed in Section~\ref{Practical_MZ}, it can reasonably be asserted that an MSM written under its form \eqref{Eq_MSMfunc2} represents a good approximation of the 
{GLE --- i.e.,} the optimal closure model predicted by the  MZ formalism ---
{as long as} the noise  term 
labeled (c) in { Eq.~\eqref{Eq_MSMfunc2}} is weakly correlated in time with $x$;  see the $\eta$-test $({\mathfrak T})$ there. 

For the  model \eqref{toy_model} at hand, the maximum absolute values of the corresponding component-wise Pearson's correlation coefficients  are $0.11, 0.33, 0.42$ and $0.47$, for $\epsilon=0.1, 0.5, 1.0$ and $1.5$, respectively. As a consequence, the $\eta$-test allows { one} to conclude that the more the time-scale separation  is reduced, the more some of the $(x_1,x_2)$-dependences (in terms of correlations)  are not well  resolved by the Markovian and non-Markovian terms conveyed by our energy-preserving  \eqref{Eq_EMR}. 
However, as illustrated here, the contributions of such dependencies  to a good reproduction of the main observed statistics may turn out in practice to be negligible.
{When that is the case, the resulting EMR model may still 
perform quite well;} see panels (g) and (h) of Figs.~\ref{fig:pdf1d} and \ref{fig:pdf2d}, and panel (d)  of Fig.~\ref{fig:emr_acf}. 

Finally, as predicted by the $\eta$-test  proposed in Sec.~\ref{Practical_MZ} here, the reproduction of the observed statistics is improved as Pearson's correlation coefficient gets closer to zero; see panels (a)--(f) of Figs.~\ref{fig:pdf1d} and \ref{fig:pdf2d}, and panels (a)--(c) of Fig.~\ref{fig:emr_acf}.  Part of the reason for this success, in the case at hand, lies in the energy-conserving formulation discussed at the end of Sec.~\ref{Sec_MSMs_att} and in \ref{App_energy}.

Recall, though, as stated already in Sec.~\ref{ssec:background}, that conservation of quadratic invariants, like in Eq.~\eqref{toy_model} here, is the exception --- rather than the rule --- in full climate models that contain thermodynamic variables, precipitation and chemistry, let alone in other areas of the physical and life sciences. In the next section, we consider a population-dynamics model that does contain quadratic coupling terms between variables but does not possess a quadratic energy to be conserved, {in absence of dissipation}.


\section{Reflected diffusion processes by MSMs and reconstruction of strange attractors}\label{sec_pop_dyn}

\subsection{The original model}
Our second  data-based closure example is the following 
classical population dynamics model:
\be\label{LV_model}
\frac{\d N_i}{\d t}=b_i N_i\Big(1-\sum_{j=1}^n a_{ij} N_j\Big),   \quad 1\leq i\leq n;
\ee
{here $N_i$} denotes the population size of the $i^{th}$ species 
{relative} to its carrying capacity, $b_i > 0$ denotes its intrinsic growth rate, and { the  $a_{ij}$'s} denote the 
interaction coefficients: {intraspecific when $i=j$, and interspecific when $i\neq j$.} We restrict ourselves to the case where $a_{ij}\geq 0$, {which corresponds} to the  well-known {\it competitive Lotka-Volterra system} \cite{volterra1931leccons, may2001stability}. Such systems manifest 
certain generic features that make their study {even more 
interesting.} For instance, they generate flows that  \textemdash\, when restricted to the appropriate invariant sets  \textemdash\, are topologically equivalent to a broad class of flows generated by systems of first-order ODEs with polynomial right-hand sides, such as the Lorenz system \cite{peschel1986predator,kozlov2013chaos}. 

This class of simple models exhibits a rich variety of dynamics, depending on the parameter 
values, as long as the initial vector  $\mathbf{N}_0$ lies in $\mathbb{R}_+^n:=\{N_i : N_i \geq 0, \,  i = 1 \ldots, n\}$, although solution blow-up is possible when a component of $\mathbf{N}_0$ is negative.
 Smale showed in 
\cite{smale1976differential} that --- under the aforementioned conditions and for any initial data in the cone $\mathbb{R}_+^n$ --- a Lotka-Volterra system \eqref{LV_model} of five or more species ($n \geq 5$) can exhibit any asymptotic behavior, including steady states, limit cycles, $n$-tori, or more complicated attractors. This result has had a profound influence on the theory of monotone dynamical systems in showing that competitive systems could display more than simple dynamics; see \cite{hirsch1982systems, hirsch1985systems,hirsch1988systems,hirsch1990systems, smith2008monotone}.

In particular, the proof of Smale ensures  the existence of  a closed invariant set $\mathcal{C}$ which is homeomorphic to the $(n-1)$-dimensional simplex 
\be\label{Eq_simplex}
\Delta_{n-1}=\{N_i: N_i\geq 0, \; \sum_{i=1}^n N_i=1\},
\ee
which is attracting every point (excluding the origin) in the domain $\mathbb{R}_+^n$; see \cite[pp.~71-72]{smith2008monotone}. The set $\mathcal{C}$ is known as the
carrying simplex since it ``carries'' all of the asymptotic dynamics associated with \eqref{LV_model}, and its
existence significantly reduces the set of possibilities in certain dimensions. For instance, the existence of such a simplex implies that an attractor associated with \eqref{LV_model} cannot have a dimension greater than $n-1$ so that, in particular, chaos cannot take place when $n=3$. This { situation} is obviously 
in stark contrast with 
that of three-dimensional ODE systems with quadratic energy-preserving nonlinearities, such as the standard Lorenz system {\cite{Lorenz63, GhCh87}, although} the dynamics of the latter 
{is still realizable  
in a higher-dimensional} Lokta-Volterra system; 
see \cite[Sect.~4]{kozlov2013chaos}.  

In fact, as numerically shown in \cite{arneodo1982strange}, the smallest dimension 
{for which} complicated dynamics takes place on a strange attractor for \eqref{LV_model} corresponds to $n=4$.   As already conjectured in \cite{arneodo1982strange}, homoclinic tangencies are responsible 
{for} the structural instability of such strange attractors \cite{vano2006chaos}; 
{the latter} can even experience sudden changes, and be  brutally transformed into a steady state, after 
{a small change in parameter 
values;} see \cite[Figs.~3 and 4]{RC11}. Another interesting feature of \eqref{LV_model} for $n=4$ is the rarity of occurrence of chaotic { behavior in a} 20-dimensional parameter space; see 
\cite{vano2006chaos} and \cite[Fig.~2]{RC11}.

Furthermore, when chaos takes place for a particular set of parameters, the corresponding strange attractor may not attract all the points of  $\mathbb{R}_+^4$; 
 in the latter case, it typically coexists 
with simple local attractors, such as fixed points, 
and the attractor-basin boundaries are fractal, 
{as in}~\cite[Fig.~5]{RC11}.}
Finally, for a solution evolving on a typical strange attractor,  
the auto-correlation function  of each component decays at a nearly identical rate; see, 
for instance, the red curves in Fig.~\ref{Fig_ACF_LV} below.

The aforementioned features  \textemdash\,  lack of time-scale separation; rarity of strange attractors in the parameter space; existence of fractal 
{boundaries between attractor basins;} and positivity of { the solutions'} components \textemdash\,  
{greatly add to the difficulties in 
deriving} a data-based stochastic closure model able to simulate faithfully the statistics of  solutions that 
evolve on a strange attractor associated with \eqref{LV_model} for $n=4$; { and especially so when only using a time series that represents partial observations of such a solution.}
The stumbling block of lack of time-scale separation 
has {already been} discussed. 
The combination of the two facts that (i) in general a time series of partial observations of solutions evolving on a strange attractor  can be rigorously represented as a stochastic process 
{that does depend} on the unobserved variables (see Theorem A in \cite{Chek_al13_RP} and Corollary B in { its} supporting information; and that (ii) chaos takes place over very small regions of the parameter space for \eqref{LV_model} with  $n=4$, seriously hamper the learning of any inverse stochastic model 
{for} the statistics of such partial observations.  

Difficulties  in estimating the correct statistical behavior can already be observed in the case of full observations of a chaotic attractor \cite{voit2010parameter}: 
when the corresponding  four-dimensional time series is corrupted by a Gaussian noise, the accuracy of the parameter values  \textemdash\, as estimated by simple regression \textemdash\,  starts 
to degrade as the data become noisier \cite[Table 1]{voit2010parameter} and the 
estimated behavior may quickly deviate from the original one; see   \cite[Fig.~3]{voit2010parameter}. This divergence is not surprising 
{and has to be} overcome with any other estimation approach, 
{given} the rarity of occurrence of chaos in the { model's} parameter space. 

We show in the next subsection that --- for the more challenging case of partial observations 
\textemdash\,  
{it suffices to add} a positivity constraint to the standard EMR formulation \eqref{Eq_EMR} 
{for our MSM approach to allow one 
to derive} closure models with very good  statistical simulation skill.


\subsection{Numerical results}

The parameters of the system \eqref{LV_model} for $n=4$ read as follows:

\be\label{A}
(a_{ij})_{1\leq i,j\leq 4}=\left( \begin{array}{cccc}
 1 & 1.09  &  1.52 & 0 \\
 0 & 1 &  0.44 &  1.36 \\
  2.33 &  0 &  1 & 0.47 \\
   1.21 &  0.51 & 0.35 & 1
\end{array} \right), \;\;  b=\left(\begin{array}{c} 1\\ 0.72 \\ 1.53\\  1.27 \end{array} \right).
\ee

These parameter values are those used in \cite{vano2006chaos, RC11}, for which numerical evidence of chaos is well established.
Time series of length 
{$l=1.5 \times 10^5$} were generated by integration of \eqref{LV_model}, 
{using} a standard Euler scheme with time step $\delta t= 0.035$ and 
initialized at
\be
\mathbf{N_0}=(0.5, 0.2, 0.3, 0.7)^T.
\ee
For such an initial 
{state}, the time series evolves on an approximation $\widehat{\mathcal{A}}$ of the strange attractor $\mathcal{A}$; see left panel of Fig.~\ref{Fig_Att_LV} below.
Only the 
the first three components $(N_1,N_2, N_3)$ are observed from the integration of this four-dimensional system of ODEs, after removal of the  initial transient.

In this {population dynamics} context, the first property any inverse stochastic model has to satisfy is the positivity of its solutions' components. This {constraint can be seen as the counterpart of quadratic-energy conservation in the present context; it} leads to the following  natural modification of an MSM in its EMR formulation \eqref{Eq_EMR} 
for such inverse models, namely   
\be\label{Eq_EMR2} 
\begin{aligned} 
& \hspace*{1.4em}x_{k+1}=\Pi_{\mathcal{K}_{\epsilon}}\Big(x_k+\big[-A_Nx_k +B_N(x_k,x_k)+F_N\big]\delta t +  r_k^{(0)}\delta t\Big), \quad 1\leq k\leq l,\\
& \hspace*{1.4em}  r_{k+1}^{(m-1)}- r_{k}^{(m-1)}= L^{(m)}_N\big[(x_k)^T,( r_k^{(0)})^T,...,( r_k^{(m-1)})^T\big]^T\delta  t+ r_k^{(m)}\delta t, \quad 1\leq m \leq p.
\end{aligned} 
\ee
We adopt here the notations of Sec.~\ref{sec:emr_form}, where 
$x_k=(N_1(t_k),N_2(t_k),N_3(t_k))$, 
$t_k=k\delta t$, 
the $r^{(m)}_k$ are three-dimensional vectors, and we recall that $p$ is the number of levels for which the stopping criterion of \ref{App1} is met.

Here $\Pi_{\mathcal{K}_{\epsilon}}$ denotes the projection onto the convex set 
\be
\mathcal{K}_{\epsilon}:=\{ (N_1,N_2,N_3)\in \mathbb{R}^3_+\,:\, N_i\geq \epsilon,\; 1\leq i\leq 3\},
\ee
for some appropriately chosen $\epsilon>0$ so that
\be
P\widehat{\mathcal{A}}\subset \mathcal{K}_{\epsilon},
\ee
where $P\widehat{\mathcal{A}}$ denotes the projection of $\widehat{\mathcal{A}}$ onto $\mathbb{R}^3$.
Typically, $\epsilon$ can be chosen to be any positive number smaller than the minimum of the { first three } observed components over the
simulation { interval $0 < t \le l$,} which
led us to choose $\epsilon=0.12$.  

When the last-level residual in \eqref{Eq_EMR2}  is well approximated by an independent and identically distributed (i.i.d.) Gaussian random vector (see Remark \ref{Rem:scaling}), the presence of the projection $\Pi_{\mathcal{K}_{\epsilon}}$ in \eqref{Eq_EMR2} implies that the 
resulting recursive stochastic process belongs to a class of (discrete) {\it reflected diffusion processes}; see e.g.~\cite{dupuis1987large}.  
It is the presence of this projector that ensures the vector $x_k$ 
{will have necessarily positive components when 
it obeys} \eqref{Eq_EMR2}, in which the last residual has been replaced by an i.i.d.~Gaussian noise.

To simplify the estimation procedure of the coefficients in \eqref{Eq_EMR2}, a 
{straightforward} multilevel regression procedure, such as described 
in Sec.~\ref{sec:emr_form}, is performed 
{but} the projection is removed 
{at first.} The simulation step is then performed by including the projection $\Pi_{\mathcal{K}_{\epsilon}}$, according to \eqref{Eq_EMR2},  
 in order to avoid the negative values that could lead to blow-up.

Figure \ref{Fig_ACF_LV} show that 
the autocorrelation functions 
of the observed variables $(N_1, N_2,N_3)$ 
{are very well reproduced} by an MSM \eqref{Eq_EMR2} 
{with} $p=14$ extra layers of hidden variables. The peculiar Shilnikov-like shape of the attractor $\widehat{\mathcal{A}}$ is also captured with relatively good accuracy, { as shown by a comparison of panels (a) and (b) in Fig.~\ref{Fig_Att_LV}.}

\begin{figure}
\vspace{-2ex}\centering
\includegraphics[height=0.5\textwidth, width=.75\textwidth]{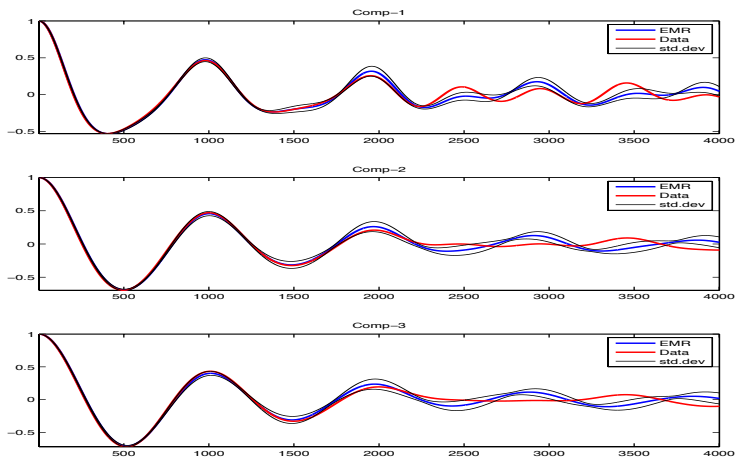}
\vspace{-3ex}
\caption{\small Autocorrelation functions 
of the observed variables $(N_1, N_2,N_3)$ (red curves) obtained by integration of   \eqref{LV_model} with $a_{ij}$ and $b$ given in \eqref{A}, vs.~those estimated from the MSM-simulated  dynamics \eqref{Eq_EMR2} (blue curves), along with their standard deviations (black curves). } 
\label{Fig_ACF_LV}
\end{figure}

\begin{figure}[htbp]
\vspace{-2ex}\centering
\includegraphics[height=0.5\textwidth, width=1\textwidth]{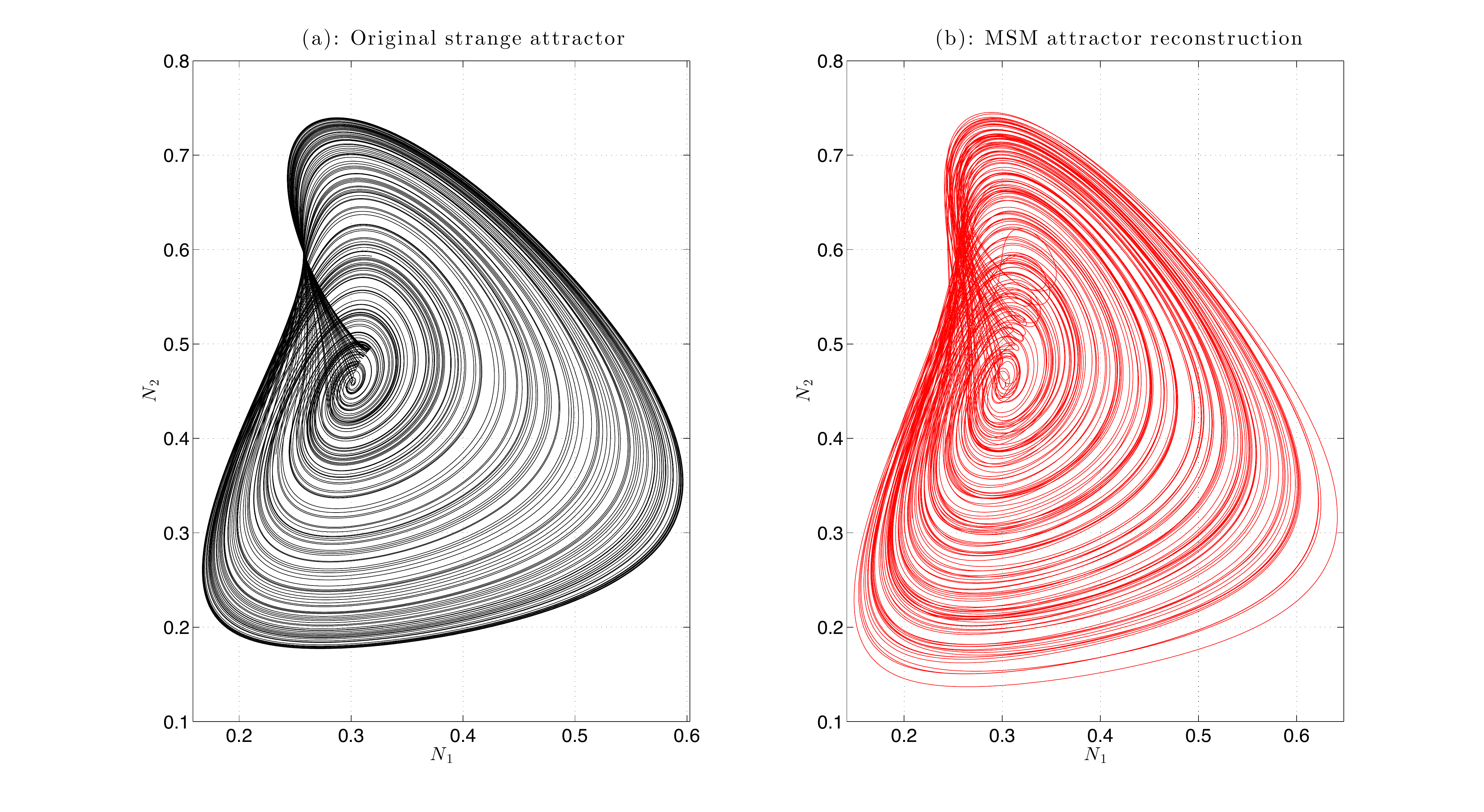}
\vspace{-4ex}
\caption{\small {Strange attractor of the system \eqref{LV_model}: (a)} Original strange attractor of the system; 
and (b) its MSM reconstruction, { both 
projected onto the $(N_1, N_2)$ plane. Panel (b) is obtained by simple forward integration of  \eqref{Eq_EMR2} for an arbitrary realization of the estimated i.i.d. Gaussian noise.}}
\label{Fig_Att_LV}
\end{figure}

The fine, fractal-like structure visible in the MSM attractor reconstruction { of Fig.~\ref{Fig_Att_LV}b} is, as explained below, suggestive of the presence of degenerate noise in the MSM model.   
A closer look at the numerically estimated coefficients in Eq.~\eqref{Eq_EMR2} 
reveals that the spectrum of the grand  linear part of the model
--- which involves the variables $x,r^{(0)},\ldots,r^{(p-1)}$, along with the associated matrices --- contains three unstable modes, while the noise part forces only a low-dimensional subspace, spanned by a few stable, decaying modes. Such a 
combination of unstable modes and partial forcing 
favors a violation of the H\"ormander condition. 
Although we did not formally verify  that this is the case,  
{Fig.~\ref{Fig_Att_LV}b} clearly shows that simple forward integration of Eq.~\eqref{Eq_EMR2} does not reveal  the symptomatic fuzziness that such a plot typically displays when 
the SDE's generator 
{is hypoelliptic;} see \cite{csg11}. The fractal-like features observed here { in Fig.~\ref{Fig_Att_LV}b} recall Remark \ref{Rem:Levy} in Sec.~\ref{Sec_MSM}, to the extent that they argue for
overall stable behavior being possible in the presence of {linearly unstable modes and of} degenerate noise.

The relatively large number of extra layers, { $p = 14$,} can be significantly reduced 
{by taking into account} the particular structure of the right-hand side of \eqref{LV_model} in formulating the EMR system, 
i.e.~by using some {\it a priori} knowledge on the $f_i$, the $g_i$ and the $h$ to be specified in \eqref{Eq_MSMgen}.  This number can be even further reduced if the geometric constraint \eqref{Eq_simplex} is taken into account. 
The purpose {here was merely} to show that, even without 
any {\it a priori} knowledge on the dynamics, the learning of an efficient MSM closure is still possible by the use of simple multilevel linear regression techniques, such as
{those} proposed in \cite{kkg05, kkg05_enso, kkg09_rev} and
described in Sec.~\ref{sec:emr_form} here.


\section{Acknowledgments}
\label{sec:thanks}

We thank the reviewers for their very useful and constructive comments.
The collaboration with several colleagues at Columbia University has helped motivate the data-driven focus of this article. We thank S. Kravstov for useful discussions and two anonymous reviewers for stimulating comments. Preliminary results of this work were presented by DK at the workshop on ``Non-equilibrium Statistical Mechanics and the Theory of Extreme Events in Earth Science,'' held at the Isaac Newton Institute for Mathematical Sciences in November 2013. 

This study was supported by grant  N00014-12-1-0911 from the Multi-University Research Initiative of the Office of Naval Research, by the National Science Foundation grants 
DMS-1049253 and 
OCE-1243175, and by grant DOE-DE-F0A-0000411 from the Department of Energy.

\appendix

\section{EMR stopping criteria} 
\label{App1}
The stopping criterion for adding levels  to an EMR model is based on empirically testing (i) whether the autocorrelations of the last-level residual $ \zeta_t \equiv  r_k^{(p)} \delta t$  approach zero; and (ii) 
{whether} its covariance matrix at zero lag converges to a constant matrix { $\Sigma = \zeta_t^{\rm T}\zeta_t$.} For testing purposes, we can assume without loss of generality that  $\delta t = 1$. Part (i) of this stopping test is rooted in a standard Durbin-Watson statistical test \cite{DurbinWatson}. This test can also be understood in terms of 
{determining} the variance of the regression residual for the additional level that would be added to { the EMR} model {by considering the increments} $\delta \zeta_k:=\zeta_{k+1}-\zeta_{k}$.

When $\zeta_t$ is reasonably well approximated by a random Gaussian variable, i.e. $\zeta_t \sim \ \mathcal{N}(0,Q) $,  performing an extra regression 
for $\delta \zeta_k$, 
according to \eqref{equ:higher_level}, results typically {in} regression coefficients that all approach zero, except the one corresponding to $\zeta_{k}$ itself, 
which approaches $-1$, namely
\be\label{Eq_EMRstop} 
\zeta_{k+1}-\zeta_{k}\approx - \zeta_{k} + \gamma_{k};
\ee
{this observation is} numerically documented, {for instance,} in \cite[Fig.~5]{kkg05}. Thus the regression residual $\gamma_k$ 
{becomes identical} to $\zeta_{k+1}$ and 
{is just a 
lagged} copy of $\zeta_k$, { with lag 1.} The  
coefficient of determination 
$R_i^2$ for the $i$-th component of $\zeta_k$, {with $i = 1, \ldots, d$,} then becomes: 
\be\label{Eq_Rsq} 
R_i^2=1-\frac{\sum\limits_k \gamma_{i,k}^2}{\sum\limits_k (\zeta_{i,k+1}-\zeta_{i,k})^2}\approx1-\frac{\sum\limits_k \zeta_{i,k+1}^2}{\sum\limits_k (\zeta_{i,k+1}^2+\zeta_{i,k}^2)}\approx 1-\frac{\mbox{var}(\zeta_{i,k})}{2\mbox{var}(\zeta_{i,k})}= 0.5. 
\ee
In other words, the fraction of unexplained variance resulting from the regression \eqref{Eq_EMRstop}  approaches 0.5. 

Convergence of the covariance matrix {of $\zeta_{t} \sim r_k^{(p)}$} to a constant matrix $\Sigma=Q^T Q$ can be checked numerically by computing its eigenvalues at each level of Eq.~\eqref{Eq_EMR}. 
{This} convergence typically 
{coincides} with the convergence of $R_i^2$ to 0.5 in Eq.~\eqref{Eq_Rsq}, for each of the components of  $\zeta_{t}$.


\section{Real-time prediction from an MSM, and initialization of   the hidden variables}\label{App3}
This appendix is concerned with the initialization problem for real-time prediction based on a time-discrete version of an MSM. The notations used 
{herein} are those of {of Sec.~\ref{sec:emr_form} and of the previous appendix.}  

To simplify the presentation, we stick to the case of an EMR in its original formulation, namely a discrete system such as  \eqref{Eq_EMR}.
{To integrate \eqref{Eq_EMR} for predictive purposes requires} some 
attention 
to the initialization of the hidden $r^{(m)}$ variables, {$ m = 1, \ldots, p,$  
on the additional EMR} levels. %

For illustration purposes, we consider here the simple case of a three-level model,  
namely the main level and two additional ones, i.e. $p=2$.
This model is assumed to be {\it trained} on the interval { $(0, T^*)$,
i.e.} the model coefficients are obtained from the multivariate time series of data available {for $t = k\delta t$ 
in} this interval. 
{The next point in the time series, 
$T^*+1$,} belongs to the {\it validation} interval where we want to initiate prediction into the future. In other words, we place ourselves in the { real-time case, 
in which} the model coefficients are no longer updated with new observations 
after $T^*$.

{The} right-hand side of \eqref{Eq_EMR} provides a practical way to initialize the hidden variables. {By assuming, as in \ref{App1} and} without loss of generality, that $\delta t=1$, {one gets 
the} following recurrence relations:
\begin{subequations}\label{Eq_r_est}
\begin{align}
& r^{(0)}_{k} = f_1(x_k)-(x_{k+1}-x_k), \label{recur_1} \\
& r^{(1)}_{k} = L^{(0)}[x_k,r^{(0)}_{k}]-(r^{(0)}_{k+1}-r^{(0)}_{k}); \label{recur_2} 
\end{align}
\end{subequations}
here $f_1(x)=-Ax +B(x,x)+F$, and 
the  {last-level} noise $\xi_k$ has been dropped.

If we assume $x_{T^*}$ to be the last available observed data { point,} then 
\eqref{Eq_r_est} {only provides} the hidden variables  $r^{(0)}_{k}$ and   $r^{(1)}_{k}$ 
{up to their corresponding
last} values, namely $r^{(0)}_{T^*-1}$ and $r^{(1)}_{T^*-2}$, respectively.
Indeed, if $x_{k+1}=x_{T^*}$, then 
{Eq.~\eqref{recur_1} only} provides $r^{(0)}_{T^*-1}$, from which { Eq.~\eqref{recur_2} can only help 
determine}  $r^{(1)}_{T^*-2}$, since $r^{(0)}_{T^*}$ is not available. 

On the other hand,  if we assume that a new observation $x_{T^*+1}$ becomes available 
at $T^*+1$ in the validation interval, the value of the $r^{(1)}$-variable can then be 
{calculated} beyond its previously known value, i.e. 
$r^{(1)}_{T^*-1}$ becomes available according to
\beas\label{Eq_back}
& r^{(0)}_{T^*}=f_1(x_{T^*})-(x_{T^*+1}-x_{T^*}),\\
& r^{(1)}_{T^*-1}=L^{(0)}[x_{T^*-1},r^{(0)}_{T^*-1}]-(r^{(0)}_{T^*}-r^{(0)}_{T^*-1}).
\eeas
After such an initialization of the hidden  $r^{(1)}$-variable beyond its {previously 
computed} value, the model prediction  $\widehat{x}_{T^*+2}$ of $x_{T^*+2}$ from the initial data $x_{T^*+1}$ is then obtained by integrating the $r^{(1)}$-variable from the last level to the main one,  according to:  \bea\label{Eq_forward}
     & r^{(1)}_{T^*} = r^{(1)}_{T^*-1} +  L^{(1)}[x_{T^*-1},r^{(0)}_{T^*-1},r^{(1)}_{T^*-1}]+\zeta_{T^*-1}, \\
      & r^{(0)}_{T^*+1} = r^{(0)}_{T^*} +  L^{(0)}[x_{T^*},r^{(0)}_{T^*}]+r^{(1)}_{T^*}, \\
      & \widehat{x}_{T^*+2} = x_{T^*+1} + f_1(x_{T^*+1}) + r^{(0)}_{T^*+1}.
\eea
Note that both  $r^{(1)}_{T^*}$ and  $r^{(0)}_{T^*+1}$  are now randomized  
due to the presence of the random variable $\zeta_{T^*-1}$ on the last level.  Hence the forecast uncertainty is properly accounted for in $x_{T^*+2}$.  

Such an initialization and forecast procedure {based on Eq.~\eqref{Eq_integlog2}} can obviously be carried out for any number of levels of an EMR. More generally, it can be extended to 
an explicit Euler-Maruyama discretization of any stable MSM,  given in its general form and  satisfying the hypotheses of Theorem \ref{Main_thm}. 
The discussion above emphasizes 
that, for such multilevel systems and for real-time prediction purposes, the forecast has to be started { several} time steps back into the past; { the number of these steps must 
equal} the number of hidden layers in  \eqref{Eq_MSMgen}. As an illustration,  Eq.~\eqref{Eq_back} represents {\it backward} initialization of hidden variables in the past by going from the main level to the last one, followed by their {\it forward}  integration from the last level to the main one into the future, as in  \eqref{Eq_forward}. 
%


\section{Energy conservation and its practical EMR aspects}\label{App_energy}

In this appendix, we identify linear relations among the coefficients $B_{ijk}$ so that Eq.~\eqref{Eq_self_energy-preserv} 
be satisfied. Since it is easier to 
introduce  linear constraints in the least-square estimation of the regression coefficients, we look for such constraints that are, in fact,  both necessary and sufficient for Eq.~\eqref{Eq_self_energy-preserv} to hold. These constraints ensure that {\it quadratic} nonlinearities in an EMR formulation will preserve the energy $\| x\|^2$ for the Euclidean norm $\| \cdot \|$ associated with a given basis. In another basis, these conditions 
remain valid, up to a linear change of coordinates.
There are three types of such linear equality constraints for $B$.

First, the coefficient $B_{iii}$ that corresponds to the quadratic term $x_{i}^2$  in the equation for the time evolution of $x_{i}$ is required to be identically zero: 
\begin{equation}
\label{nlcons1}
B_{iii}=0, \quad 
{ i = 1, \ldots, d.}
\end{equation}
In addition, there is a condition { that involves} 
the quadratic 
interactions $x_jx_k$ and $x_k^2$ in the equation for $x_{j}$, and $x_j^2$ and $x_jx_k$ in the equation for $x_{k}$. This condition yields the following skew-symmetric constraints for the two pairs of coefficients:\begin{equation}
\label{nlcons2}
B_{jjk}+B_{kjj}=0, \quad B_{jkk}+B_{kjk}=0, 
{ \quad 1 \le j \neq k \le d.}
\end{equation}
Finally, there are { the quadratic} 
interactions of type $x_jx_k$ in the equation for $x_i$ 
that require the sum of three EMR coefficients to be zero:  
\begin{equation}
\label{nlcons3}
B_{ijk}+B_{jik}+B_{kij}=0, 
{ \quad 1 \le i,  j, k \le d \; } \mbox{ such that}\, (i-j)(j-k)(k-i) \ne 0.
\end{equation}

{A condition like \eqref{Eq_dissipA}} for the matrix $A$ to be positive definite 
can also be included in the regression procedure, as necessary and appropriate.  For particular applications {that are} consistent with geophysical flow models, such as the one considered in Sec.~\ref{sec:Climate_ex},
this condition can be cast as 
a combination of linear equality and inequality constraints. The equality constraint imposes skew-symmetry for off-diagonal terms: 
\begin{equation}
\label{skewcons}
A_{ij}+A_{ji}=0, \quad i\ne j,
\end{equation}
while the { inequality one requires nonnegative} diagonal terms:  
\begin{equation}
\label{poscons}
{ A_{ii} > 0.}
\end{equation}

The total set of constraints is determined by a look-up through all possible occurrences of the above types, Eqs.~\eqref{nlcons1}--\eqref{poscons}, and their number $p$ scales as $p \sim d^2$, where $d$ is the dimension of $x$. {For a quadratic EMR model such as 
\eqref{Eq_EMR}, the total number of model coefficients $P$ is also of order $d^2$. Hence the use of these constraints} reduces considerably the number of independent model coefficients, {from $P$ to $P-p$.} 

Note that energy conserving constraints have been used in model reduction techniques before \cite{Majda_Yuan12,Majda_Harlim2012,MMH}, 
cf.~\citep{Kwasniok96, Kwasniok07}. In both these approaches, however, it is assumed that the full model equations are known and available. In particular, refs.~\cite{Majda_Yuan12,Majda_Harlim2012,MMH} used essentially parameter estimation techniques to obtain the values of the parameters in a model of known structure. 
{The} data-driven methodology proposed in this section, { however,} operates 
{whether} the full governing equations that generated the data are known or not. Hence, Eqs.~\eqref{nlcons1}--\eqref{poscons} are new and of some interest, in particular in the highly realistic and frequently occurring case in which a detailed --- physical, chemical, biological or socio-economic --- model of the process that generated the data is not known.
 
In the unconstrained case, the coefficients for each component $x_i$ of $x$ can be estimated sequentially { by the EMR methodolgy, while} the energy-conserving constraints require estimation of all model coefficients at the main level \textemdash\, {\it i.e.}, $A$, $B$ and $F$  \textemdash\, simultaneously, by using a grand matrix of predictors. The least-square minimization with constraints is solved via quadratic programming with a set of Lagrange multipliers, by using an active-set algorithm or a projection method \cite{Gill_etal1981}.


\section {Interpretation of MSM coefficients}
\label{sec:emd_lim}
Here we focus on the question of interpreting coefficients of EMR {or, more generally,} MSM models that have been constructed in the partial-observing 
situation. The emphasis is on a simple but essential  time-orthogonality property that the observed and hidden variables must satisfy as a consequence of the multilevel regression procedure in the EMR {and} MSM methodology. This  time-orthogonality property typically yields 
different {reduced-model} coefficients than {those of} the original full model, while still 
{allowing the former} to simulate the main statistical features of the observed dynamics. For 
{illustration purposes,} we use  a simple linear-model example borrowed from  \cite{Majda_Yuan12}.

First we point out a basic but important property to be satisfied by the {$r^{(m)}$-variables} 
once the multilevel regression described in Sec.~\ref{sec:emr_form}  has been applied. Let us recall 
that for classical --- {\it i.e.}, unconstrained and nonregularized --- least-square minimization, and in continuous time, a regression residual ${r}^{(0)}(t)$ is necessarily orthogonal in the time domain to the predictor variables $x$ \cite{hastie2009linear}. Likewise, the $r^{(m)}(t)$  variables are estimated 
so as to be orthogonal in the time domain to the variables from the previous levels $({x},{r}^{(0)},\ldots, {r}^{(m-1)})$, 
\beas\label{Eq_MSM_ortho}
& \hspace*{1.4em}\langle x , r^{(m-1)}\rangle_{L^2}=0;\\
& \hspace*{1.4em}\langle r^{(j)}, r^{(m)}\rangle_{L^2}=0, 
\quad 1 \le j, m \le { p}-1 \;\textrm{ with } j \leq m-1,
\eeas
with respect to the $L^2((0,T^*);\mathbb{R}^d)$ inner product defined by:
 \be
 \langle {z}, {y}\rangle_{L^2} := \sum_{i=1}^d \frac{1}{T^*}\int_{0}^{T^*}  z_i (t)y_i(t)\d t,
\ee
and for any vector-valued functions${z}(t) = (z_1(t), \ldots, z_d(t))^{{\mathrm T}}$ and ${y}=(y_1(t), \ldots, y_d(t))^{{\mathrm T}}$.

The simple two-variable, linear model 
from \cite{Majda_Yuan12} {that we use is given by} 
\begin{equation}
\label{EMR_jj}
\left(\begin{array}{c} \d x \\ \d y \end{array}\right) =\left(\begin{array}{cc} a & 1 \\ q & A \end{array}\right)\left(\begin{array}{c} x \\ y \end{array}\right)\d t + \left(\begin{array}{c} 0 \\ \sigma \d W \end{array}\right);
\end{equation}
$x$ here stands for a slow, resolved variable, 
$y$ for a fast, unresolved one,  {while the model coefficients are} $a = -2, q = 1$ and $A = -1$, 
 with $\sigma > 0$. By using 
the standard Euler-Maruyama scheme for 
SDEs \cite[p.~305]{KP92}, one integrates
the following finite-difference version of \eqref{EMR_jj}:
\begin{equation}
\label{EMR_eu}
\left(\begin{array}{c} x_{i+1}- x_i\\ y_{i+1}-y_i \end{array}\right) =\left(\begin{array}{cc} a & 1 \\ 
q & A \end{array}\right)\left(\begin{array}{c} x_i \\ y_i \end{array}\right){\delta}t + \left(\begin{array}{c} 0 \\  \sigma \xi_i \sqrt{\delta t} \end{array}\right),
\end{equation}
 where the $\xi_i$'s are real-valued random variables drawn independently from a normal distribution $\mathcal{N}(0,1)$.

In \cite{Majda_Yuan12}, standard least-square techniques (see appendices therein)
were used to derive analytically an EMR model able to reproduce statistical features  of the dynamics  \textemdash\,  such as decay of correlations or the PDF \textemdash\, of the $x$-variable alone, by using only a finite-length time series $\{x_i : i=1,...,N\}$ of the resolved variable obtained by { integrating the full system given by} Eq. \eqref{EMR_eu}. 
According to Theorem 4.2 in \cite{Majda_Yuan12}, in the limit of $N \rightarrow \infty$ and $\delta t \rightarrow0$, the resulting EMR model's linear part for the { $(x, r)^{\rm T}$}  vector has different coefficients but the same eigenvalues as the original $(x, y)$ model of Eqs.~\eqref{EMR_eu}. 

We stress that the difference in model coefficients 
is solely due to the change of basis imposed by the orthogonal dynamics of the ``hidden'' EMR variable $r$, according to {the conditions given in} \eqref{Eq_MSM_ortho}. 
The related similarity transformation {$S: (x,y)\rightarrow(x,r)$} of the model's linear part  is simply 
\begin{equation}
\label{EMR_trmat}
\left(\begin{array}{c} x \\ y \end{array}\right) = S \left(\begin{array}{c} x \\ r \end{array}\right), \textrm{ with } S= \left(\begin{array}{cc} 1 & 0 \\ -a & 1 \end{array}\right), \textrm{ and }  S^{-1}= \left(\begin{array}{cc} 1 & 0 \\ a & 1 \end{array}\right).
\end{equation}
 {In fact, the}  transformed 
version of the full model in continuous time, Eq.~\eqref{EMR_jj}, 
{is} given by the MSM 
\begin{equation}
\label{EMR_tr}
\left(\begin{array}{c} \d x  \\  \d r  \end{array}\right) =S^{-1}\left(\begin{array}{cc} a & 1 \\ q & A \end{array}\right)S\left(\begin{array}{c} x \\ r \end{array}\right) \d t + \left(\begin{array}{c} 0 \\ \sigma  \d W\end{array}\right), 
\end{equation}
while the transformed version of the full model's discrete form, Eq.~\eqref{EMR_eu}, is equivalent to  the EMR model: 

\begin{equation}
\label{EMR_est}
\left(\begin{array}{c} x_{i+1}- x_i\\ r_{i+1}-r_i \end{array}\right) =\left(\begin{array}{cc} 0 & 1 \\ q-Aa & a+A \end{array}\right)\left(\begin{array}{c} x_i \\ r_i \end{array}\right)
{\delta t} + \left(\begin{array}{c} 0 \\ \sigma \xi_i  \sqrt{{\delta}t} \end{array}\right).
\end{equation}

{Thus,} the linear part of Eq.~\eqref{EMR_est} shares the same eigenvalues as the linear part of  Eq.~\eqref{EMR_eu}, 
{while} the random forcing in Eq.~\eqref{EMR_est} is of the same amplitude, and the same nature, as in Eq.~\eqref{EMR_eu}; hence the model statistics of the simulated variables are identical in the limit of $N \rightarrow \infty$ and $\delta t\rightarrow0$. The only difference between Eq.~\eqref{EMR_est} and Eq.~\eqref{EMR_eu} is that the hidden variable $r$  
is orthogonal to $x$ \textemdash\, while the original $y$ variable is not.

The highly idealized situation analyzed here can obviously be generalized to a broad class of stochastically forced  systems that might, in particular, have a much higher number of  
resolved  as well as hidden variables.


\begin{thebibliography}{100}

\bibitem{GhilRob2000}
M.~Ghil and A.~W. Robertson, ``Solving problems with {GCMs: General}
  circulation models and their role in the climate modeling hierarchy,'' in
  {\em General Circulation Model Development: Past, Present and Future}
  (D.~Randall, ed.), p.~285Ð325, Academic Press, San Diego, 2000.

\bibitem{Stellar'05}
A.~L. Wilmot-Smith, P.~C.~H. Martens, D.~Nandy, E.~R. Priest, and S.~M. Tobias,
  ``Low-order stellar dynamo models,'' {\em Mon. Not. R. Astron. Soc.},
  vol.~363, pp.~1167--1172, 2005.

\bibitem{bachar2013stochastic}
M.~Bachar, J.~J. Batzel, and S.~Ditlevsen, {\em Stochastic {Biomathematical
  Models: with Applications to Neuronal Modeling}}.
\newblock Springer, 2013.

\bibitem{noe2008transition}
F.~No{\'e} and S.~Fischer, ``Transition networks for modeling the kinetics of
  conformational change in macromolecules,'' {\em Current opinion in structural
  biology}, vol.~18, no.~2, pp.~154--162, 2008.

\bibitem{prinz2011markov}
J.-H. Prinz, H.~Wu, M.~Sarich, B.~Keller, M.~Senne, M.~Held, J.~Chodera,
  C.~Sch{\"u}tte, and F.~No{\'e}, ``Markov models of molecular kinetics:
  Generation and validation,'' {\em J. Chemical Phys.}, vol.~134, no.~17,
  p.~174105, 2011.

\bibitem{donnet2013review}
S.~Donnet and A.~Samson, ``A review on estimation of stochastic differential
  equations for pharmacokinetic/pharmacodynamic models,'' {\em Advanced drug
  delivery reviews}, vol.~65, no.~7, pp.~929--939, 2013.

\bibitem{Cam07P2}
S.~J. Camargo, A.~W. Robertson, S.~J. Gaffney, and M.~Ghil, ``Cluster analysis
  of typhoon tracks. {P}art {II}: {L}arge-scale circulation and {ENSO},'' {\em
  J. Climate}, vol.~20, pp.~3654--3676, 2007.

\bibitem{Kovats2003}
R.~S. Kovats, M.~J. Bouma, S.~Hajat, E.~Worrall, and A.~Haines, ``{El Ni\~no
  and health},'' {\em The Lancet}, vol.~362, pp.~1481--1489, 2003.

\bibitem{Ropelewski87}
C.~Ropelewski and M.~Halpert, ``Global and regional scale precipitation
  patterns associated with the {El Ni\~no/Southern Oscillation.},'' {\em
  Mon.~Wea.~Rev.}, vol.~115, pp.~1606--1626, 1987.

\bibitem{ENSOConflict2011}
S.~M. Hsiang, K.~C. Meng, and M.~A. Cane, ``Civil conflicts are associated with
  the global climate,'' {\em Nature}, vol.~476, pp.~438--441, 2011.

\bibitem{CaneZebiak1986}
M.~Cane, S.~Zebiak, and S.~Dolan, ``{Experimental forecasts of El Ni\~no},''
  {\em Nature}, vol.~321, pp.~827--832, 1986.

\bibitem{Penland_MWR89}
C.~Penland, ``Random forcing and forecasting using principal oscillation
  pattern analysis,'' {\em Mon. Wea. Rev.}, vol.~117, pp.~2165--2185, 1989.

\bibitem{PenlandSardeshmukh_JCL95}
C.~Penland and P.~D. Sardeshmukh, ``The optimal growth of tropical sea surface
  temperature anomalies,'' {\em J. Climate}, vol.~8, pp.~1999--2024, 1995.

\bibitem{WinklerEtAl_JCL01}
C.~R. Winkler, M.~Newman, and P.~D. Sardeshmukh, ``A linear model of wintertime
  low-frequency variability. {Part I: F}ormulation and forecast skill,'' {\em
  J. Climate}, vol.~14, pp.~4474--4494, 2001.

\bibitem{Xue94}
Y.~Xue, M.~A. Cane, S.~E. Zebiak, and M.~B. Blumenthal, ``On the prediction of
  {ENSO}: {A} study with a low-order {M}arkov model,'' {\em Tellus}, vol.~46A,
  pp.~512--528, 1994.

\bibitem{PenlandMagorian1993}
C.~Penland and T.~Magorian, ``{Prediction of Ni\~no-3 sea surface temperatures
  using iinear inverse modeling},'' {\em J.~Climate}, vol.~6, pp.~1067--1076,
  1993.

\bibitem{iri12}
A.~G. Barnston, M.~K. Tippett, M.~L. Heureux, S.~Li, and D.~G. DeWitt, ``Skill
  of real-time seasonal {ENSO} model predictions during 2002-2011 --- is our
  capability improving?,'' {\em Bull.~Am.~Meteor.~Soc.}, vol.~93(5),
  pp.~631--651, 2012.

\bibitem{kkg05}
S.~Kravtsov, D.~Kondrashov, and M.~Ghil, ``Multilevel regression modeling of
  nonlinear processes: {D}erivation and applications to climatic variability,''
  {\em J. Climate}, vol.~18, pp.~4404--4424, 2005.

\bibitem{kkg05_enso}
D.~Kondrashov, S.~Kravtsov, A.~W. Robertson, and M.~Ghil, ``{A hierarchy of
  data-based ENSO models},'' {\em J.~Climate}, vol.~18, pp.~4425--4444, 2005.

\bibitem{KravtsovEtAl_OS11}
S.~Kravtsov, D.~Kondrashov, and M.~Ghil, ``An empirical stochastic model of
  sea-surface temperature and surface wind over the {Southern Ocean},'' {\em
  Ocean Sci.}, vol.~8, pp.~1891--1936, 2011.

\bibitem{kkg06}
D.~Kondrashov, S.~Kravtsov, and M.~Ghil, ``Empirical mode reduction in a model
  of extratropical low-frequency variability,'' {\em J.~Atmos.~Sci.}, vol.~63,
  pp.~1859--1877, 2006.

\bibitem{KondrashovKravtsovGhil_JAS10}
D.~Kondrashov, S.~Kravtsov, and M.~Ghil, ``Signatures of nonlinear dynamics in
  an idealized atmospheric model,'' {\em J. Atmos. Sci.}, vol.~68, pp.~3--12,
  2011.

\bibitem{Peters_etal12}
J.~M. Peters, S.~Kravtsov, and N.~T. Schwartz, ``{Predictability associated
  with nonlinear regimes in an atmospheric model},'' {\em J.~Atmos.~Sci.},
  vol.~{69}, pp.~{1137--1154}, {MAR} {2012}.

\bibitem{kcg_13MJO}
D.~Kondrashov, M.~D. Chekroun, A.~W. Robertson, and M.~Ghil, ``{Low-order
  stochastic model and ``past-noise forecasting" of the Madden-Julian
  Oscillation},'' {\em Geophys. Res. Lett.}, vol.~{40}, pp.~5305--5310, 2013.
\newblock doi:10.1002/grl.50991.

\bibitem{kir}
K.~Strounine, S.~Kravtsov, D.~Kondrashov, and M.~Ghil, ``{Reduced models of
  atmospheric low-frequency variability: Parameter estimation and comparative
  performance},'' {\em Physica D}, vol.~239, pp.~145--166, 2010.

\bibitem{kkg09_rev}
S.~Kravtsov, D.~Kondrashov, and M.~Ghil, ``Empirical model reduction and the
  modeling hierarchy in climate dynamics,'' in {\em Stochastic Physics and
  Climate Modeling} (T.~N. Palmer and P.~Williams, eds.), pp.~35--72, Cambridge
  Univ. Press, 2009.

\bibitem{Majda_Yuan12}
A.~J. Majda and Y.~Yuan, ``Fundamental limitations of ad hoc linear and
  quadratic multi-level regression models for physical systems,'' {\em Discrete
  Contin. Dyn. Systems}, vol.~17(4), pp.~1333--1363, 2012.

\bibitem{Majda_Harlim2012}
A.~J. Majda and J.~Harlim, ``{Physics constrained nonlinear regression models
  for time series},'' {\em Nonlinearity}, vol.~{26}, no.~{1}, pp.~{201--217},
  2013.

\bibitem{MMH}
J.~Harlim, A.~Mahdi, and A.~Majda, ``An ensemble kalman filter for statistical
  estimation of physics constrained nonlinear regression models,'' {\em J.
  Comput. Phys.}, no.~257, pp.~782--812, 2014.

\bibitem{smale1976differential}
S.~Smale, ``On the differential equations of species in competition,'' {\em J.
  Math. Biol.}, vol.~3, no.~1, pp.~5--7, 1976.

\bibitem{hirsch1982systems}
M.~W. Hirsch, ``Systems of differential equations which are competitive or
  cooperative: {I. Limit sets},'' {\em SIAM J. Math. Analysis}, vol.~13, no.~2,
  pp.~167--179, 1982.

\bibitem{hirsch1985systems}
M.~W. Hirsch, ``Systems of differential equations that are competitive or
  cooperative {II: Convergence} almost everywhere,'' {\em SIAM J. Math.
  Analysis}, vol.~16, no.~3, pp.~423--439, 1985.

\bibitem{hirsch1988systems}
M.~W. Hirsch, ``Systems of differential equations which are competitive or
  cooperative: {III. Competing} species,'' {\em Nonlinearity}, vol.~1, no.~1,
  p.~51, 1988.

\bibitem{hirsch1990systems}
M.~W. Hirsch, ``Systems of differential equations that are competitive or
  cooperative. iv: Structural stability in three-dimensional systems,'' {\em
  SIAM J. Math. Analysis}, vol.~21, no.~5, pp.~1225--1234, 1990.

\bibitem{smith2008monotone}
H.~L. Smith, {\em Monotone {Dynamical Systems: An Introduction to the Theory of
  Competitive and Cooperative Systems}}.
\newblock American Mathematical Society, Providence, R.I., 2008.

\bibitem{rossler1976chaotic}
O.~E. R{\"o}ssler, ``Chaotic behavior in simple reaction systems,'' {\em
  Zeitschrift Naturforschung Teil A}, vol.~31, pp.~259--264, 1976.

\bibitem{rossler1977chaos}
O.~E. R{\"o}ssler, ``Chaos in abstract kinetics: Two prototypes,'' {\em
  Bulletin of mathematical biology}, vol.~39, no.~2, pp.~275--289, 1977.

\bibitem{segel1989quasi}
L.~A. Segel and M.~Slemrod, ``The quasi-steady-state assumption: a case study
  in perturbation,'' {\em SIAM review}, vol.~31, no.~3, pp.~446--477, 1989.

\bibitem{maas1992simplifying}
U.~Maas and S.~B. Pope, ``Simplifying chemical kinetics: intrinsic
  low-dimensional manifolds in composition space,'' {\em Combustion and Flame},
  vol.~88, no.~3, pp.~239--264, 1992.

\bibitem{kalachev2001reduction}
L.~V. Kalachev and R.~J. Field, ``Reduction of a model describing ozone
  oscillations in the troposphere: example of an algorithmic approach to model
  reduction in atmospheric chemistry,'' {\em J. Atmos. Chemistry}, vol.~39,
  no.~1, pp.~65--93, 2001.

\bibitem{zwanzig2001nonequilibrium}
R.~Zwanzig, {\em Nonequilibrium {Statistical Mechanics}}.
\newblock Oxford University Press, 2001.

\bibitem{Chorin_MZ}
A.~J. Chorin, O.~H. Hald, and R.~Kupferman, ``Optimal prediction with memory,''
  {\em Physica D}, vol.~166, pp.~239--257, 2002.

\bibitem{Chorin_Hald-book}
A.~J. Chorin and O.~H. Hald, {\em {Stochastic Tools in Mathematics and
  Science}}.
\newblock No.~147 in Surveys and Tutorials in the Applied Mathematical
  Sciences, Springer, New York, 2006.

\bibitem{CKG11}
M.~D. Chekroun, D.~Kondrashov, and M.~Ghil, ``{Predicting stochastic systems by
  noise sampling, and application to the El Ni\~no-Southern Oscillation},''
  {\em Proc. Natl. Acad. Sci. USA}, vol.~108, pp.~11766--11771, 2011.

\bibitem{Majda_etal2005}
A.~J. Majda, R.~Abramov, and M.~Grote, {\em {I}nformation {T}heory and
  {S}tochastics for {M}ultiscale {N}onlinear {S}ystems}.
\newblock Philadelphia, PA: American Mathematical Society, 2005.
\newblock 140 pp.

\bibitem{Fargue}
D.~M. Fargue, ``Reducibilit\'e des syst\`emes h\'er\'editaires \`a des
  syst\`emes dynamiques (r\'egis des \'equations differentielles ou aux
  d\'eriv\'ees partielles),'' {\em C. R. Acad. Sci. Paris Ser. B.}, vol.~277,
  pp.~471--473, 1973.

\bibitem{worz1978global}
A.~W{\"o}rz-Busekros, ``Global stability in ecological systems with continuous
  time delay,'' {\em SIAM J. Appl. Math.}, vol.~35, no.~1, pp.~123--134, 1978.

\bibitem{dafermos1970asymptotic}
C.~M. Dafermos, ``Asymptotic stability in viscoelasticity,'' {\em Archive for
  rational mechanics and analysis}, vol.~37, no.~4, pp.~297--308, 1970.

\bibitem{Chek_al11_memo}
M.~D. Chekroun, F.~Di~Plinio, N.~E. Glatt-Holtz, and V.~Pata, ``Asymptotics of
  the {C}oleman-{G}urtin model,'' {\em Discrete Contin. Dyn. Syst. Ser. S},
  vol.~4, no.~2, pp.~351--369, 2011.

\bibitem{chekroun_glatt-holtz}
M.~D. Chekroun and N.~E. Glatt-Holtz, ``Invariant measures for dissipative
  dynamical systems: abstract results and applications,'' {\em Commun. Math.
  Phys.}, vol.~316, no.~3, pp.~723--761, 2012.

\bibitem{Bhatt82}
K.~Bhattacharya, M.~Ghil, and I.~L. Vulis, ``Internal variability of an
  energy-balance model with delayed albedo effects,'' {\em J.~Atmos.~Sci.},
  vol.~39, p.~1747Ð1773, 1982.

\bibitem{Roques2014ebmm}
L.~Roques, M.~D. Chekroun, M.~Cristofol, S.~Soubeyrand, and M.~Ghil,
  ``Parameter estimation for energy balance models with memory,'' {\em Proc.
  Roy. Soc. A}, vol.~470, p.~20140349, 2014.

\bibitem{volterra1931leccons}
V.~Volterra, {\em Le{\c{c}}ons sur la th{\'e}orie math{\'e}matique de la lutte
  pour la vie}.
\newblock Gauthier-Villars, Paris, 1931.

\bibitem{macdonald1978time}
N.~McDonald, {\em {Time Lags in Biological Models}}.
\newblock Lecture Notes in Biomathematics, vol. 27, Springer-Verlag, 1978.

\bibitem{smith2011}
H.~Smith, {\em {An Introduction to Delay Differential Equations with
  Applications to the Life Sciences}}.
\newblock Springer, 2011.

\bibitem{mori1965continued}
H.~Mori, ``A continued-fraction representation of the time-correlation
  functions,'' {\em Prog. Theor. Phys.}, vol.~34, no.~3, pp.~399--416, 1965.

\bibitem{kupferman2004fractional}
R.~Kupferman, ``Fractional kinetics in {Kac--Zwanzig} heat bath models,'' {\em
  J. Stat. Phys.}, vol.~114, no.~1-2, pp.~291--326, 2004.

\bibitem{kupferman2004fitting}
R.~Kupferman and A.~Stuart, ``Fitting sde models to nonlinear {Kac--Zwanzig}
  heat bath models,'' {\em Physica D}, vol.~199, no.~3, pp.~279--316, 2004.

\bibitem{horenko2007data}
I.~Horenko, C.~Hartmann, C.~Sch{\"u}tte, and F.~No{\'e}, ``Data-based parameter
  estimation of generalized multidimensional langevin processes,'' {\em
  Physical Review E}, vol.~76, no.~1, p.~016706, 2007.

\bibitem{niemann2008usage}
M.~Niemann, T.~Laubrich, E.~Olbrich, and H.~Kantz, ``Usage of the mori-zwanzig
  method in time series analysis,'' {\em Physical Review E}, vol.~77, no.~1,
  p.~011117, 2008.

\bibitem{Mane1981}
R.~Ma\~{n}\'{e}, ``On the dimension of the compact invariant sets of certain
  non-linear maps,'' in {\em Dynamical Systems and Turbulence}, vol.~898 of
  {\em Lecture Notes in Mathematics}, pp.~230--242, Springer, Berlin, 1981.

\bibitem{Takens1981}
F.~Takens, ``Detecting strange attractors in turbulence,'' in {\em Dynamical
  Systems and Turbulence}, vol.~898 of {\em Lecture Notes in Mathematics},
  pp.~366--381, Springer, Berlin, 1981.

\bibitem{Broom_King1986a}
D.~S. Broomhead and G.~P. King, ``Extracting qualitative dynamics from
  experimental data,'' {\em Physica D}, vol.~20, no.~2-3, pp.~217--236, 1986.

\bibitem{Broom_King1986b}
D.~S. Broomhead and G.~P. King, ``On the qualitative analysis of experimental
  dynamical systems,'' in {\em Nonlinear Phenomena and Chaos} (S.~Sarkar, ed.),
  pp.~113--144, Bristol, England: Adam Hilger, 1986.

\bibitem{Ghil2002}
M.~Ghil, M.~R. Allen, M.~D. Dettinger, K.~Ide, D.~Kondrashov, M.~E. Mann, A.~W.
  Robertson, A.~Saunders, Y.~Tian, F.~Varadi, and P.~Yiou, ``Advanced spectral
  methods for climatic time series,'' {\em Rev. Geophys.}, vol.~40, no.~1,
  pp.~1--41, 2002.

\bibitem{nichkawde2014sparse}
C.~Nichkawde, ``Sparse model from optimal nonuniform embedding of time
  series,'' {\em Physical Review E}, vol.~89, no.~4, p.~042911, 2014.

\bibitem{BoxJenkins70}
G.~E.~P. Box and G.~Jenkins, {\em Time Series Analysis, Forecasting and
  Control}.
\newblock Boca Raton, Fla.: Holden-Day, 1970.

\bibitem{billings1989identification}
S.~A. Billings, S.~Chen, and M.~J. Korenberg, ``Identification of {MIMO}
  non-linear systems using a forward-regression orthogonal estimator,'' {\em
  Intl. J. Control}, vol.~49, no.~6, pp.~2157--2189, 1989.

\bibitem{lu2001new}
S.~Lu, K.~H. Ju, and K.~H. Chon, ``A new algorithm for linear and nonlinear
  arma model parameter estimation using affine geometry [and application to
  blood flow/pressure data],'' {\em Biomedical Engineering, IEEE Transactions
  on}, vol.~48, no.~10, pp.~1116--1124, 2001.

\bibitem{Willsky2002Markov}
A.~Willsky, ``Multiresolution markov models for signal and image processing,''
  {\em Proc. IEEE}, vol.~90, no.~8, pp.~1396--1458, 2002.

\bibitem{Ihler2007graphical}
A.~T. Ihler, S.~Kirshner, M.~Ghil, A.~W. Robertson, and P.~Smyth, ``Graphical
  models for statistical inference and data assimilation,'' {\em Physica D},
  vol.~230, pp.~72--87, 2007.

\bibitem{Hornik1989ann}
K.~Hornik, M.~Stinchcombe, and H.~White, ``Multilayer feedforward networks are
  universal approximators,'' {\em Neural Networks}, vol.~2, no.~5,
  pp.~359--366, 1989.

\bibitem{Mukhin2014ann1}
D.~Mukhin, E.~Loskutov, A.~Mukhina, A.~Feigin, I.~Zaliapin, and M.~Ghil,
  ``Predicting critical transitions in enso models, part i: Methodology and
  simple models with memory.'' {\it J. Climate,} under review, 2014.

\bibitem{Mukhin2014ann2}
D.~Mukhin, D.~Kondrashov, E.~Loskutov, A.~Gavrilov, A.~Feigin, I.~Zaliapin, and
  M.~Ghil, ``Predicting critical transitions in enso models, part ii: Spatially
  dependent models.'' {\it J. Climate,} under review, 2014.

\bibitem{Majda_turb}
A.~J. Majda and P.~R. Kramer, ``Simplified models for turbulent diffusion:
  theory, numerical modelling, and physical phenomena,'' {\em Physics Reports},
  vol.~314, no.~4, pp.~237--574, 1999.

\bibitem{Chek_al13_RP}
M.~D. Chekroun, J.~D. Neelin, D.~Kondrashov, J.~C. McWilliams, and M.~Ghil,
  ``{Rough parameter dependence in climate models: The role of Ruelle-Pollicott
  resonances},'' {\em Proc.\ Natl.\ Acad.\ Sci.\ USA}, vol.~111, no.~5,
  pp.~1684--1690, 2014.

\bibitem{Crommelin_Majda}
D.~T. Crommelin and A.~J. Majda, ``Strategies for model reduction: comparing
  different optimal bases,'' {\em J. Atmospheric Sci.}, vol.~61, no.~17,
  pp.~2206--2217, 2004.

\bibitem{hastie2009linear}
T.~Hastie, R.~Tibshirani, and J.~Friedman, {\em Linear Methods for Regression}.
\newblock Springer, 2009.

\bibitem{billingsley2009convergence}
P.~Billingsley, {\em {Convergence of Probability Measures}}.
\newblock John Wiley \& Sons, 2009.

\bibitem{KP92}
P.~E. Kloeden and E.~Platen, {\em Numerical {S}olution of {S}tochastic
  {D}ifferential {E}quations}, vol.~23 of {\em Applications of {M}athematics}.
\newblock Springer-Verlag, Berlin, 1992.

\bibitem{higham2002strong}
D.~J. Higham, X.~Mao, and A.~M. Stuart, ``Strong convergence of {Euler-type}
  methods for nonlinear stochastic differential equations,'' {\em SIAM J.
  Numerical Analysis}, vol.~40, no.~3, pp.~1041--1063, 2002.

\bibitem{mao2006stochastic}
X.~Mao and C.~Yuan, {\em Stochastic Differential Equations with Markovian
  Switching}.
\newblock World Scientific, 2006.

\bibitem{penland_levy}
C.~Penland and B.~D. Ewald, ``On modelling physical systems with stochastic
  models: diffusion versus {L{\'e}vy processes},'' {\em Phil. Trans. Roy. Soc.
  A}, vol.~366, no.~1875, pp.~2455--2474, 2008.

\bibitem{eckmann_ruelle}
J.-P. Eckmann and D.~Ruelle, ``Ergodic theory of chaos and strange
  attractors,'' {\em Rev. Modern Phys.}, vol.~57, pp.~617--656, 1985.

\bibitem{gess_al11}
B.~Gess, W.~Liu, and M.~R{\"o}ckner, ``Random attractors for a class of
  stochastic partial differential equations driven by general additive noise,''
  {\em J. Differential Equations}, vol.~251, no.~4, pp.~1225--1253, 2011.

\bibitem{schenk1998random}
K.~R. Schenk-Hopp{\'e}, ``Random attractors-general properties, existence and
  applications to stochastic bifurcation theory,'' {\em Discrete Cont. Dyn.
  Syst.}, vol.~4, pp.~99--130, 1998.

\bibitem{CLW14_vol1}
M.~D. Chekroun, H.~Liu, and S.~Wang, {\em {Approximation of Invariant
  Manifolds: Stochastic Manifolds for Nonlinear SPDEs I}}.
\newblock Springer Briefs in Mathematics, {\it to appear}, Springer, 2014.

\bibitem{Arnold98}
L.~Arnold, {\em Random Dynamical Systems}.
\newblock New York: Springer-Verlag, 1998.

\bibitem{csg11}
M.~D. Chekroun, E.~Simonnet, and M.~Ghil, ``Stochastic climate dynamics: Random
  attractors and time-dependent invariant measures,'' {\em Physica D},
  vol.~240, no.~21, pp.~1685--1700, 2011.

\bibitem{CF94}
H.~Crauel and F.~Flandoli, ``Attractors for random dynamical systems,'' {\em
  Probab. Theory Relat. Fields}, vol.~100, pp.~365--393, 1994.

\bibitem{arnol1989mathematical}
V.~I. Arnol'd, {\em Mathematical {Methods of Classical Mechanics}}, vol.~60.
\newblock Springer, 1989.

\bibitem{Varadi2003}
F.~Varadi, B.~Runnegar, and M.~Ghil, ``Successive refinements in long-term
  integrations of planetary orbits,'' {\em Astrophys. J.}, vol.~592,
  pp.~620--630, 2003.

\bibitem{hairer2006geometric}
E.~Hairer, C.~Lubich, and G.~Wanner, {\em Geometric {Numerical Integration:
  Structure-preserving Algorithms for Ordinary Differential Equations}}.
\newblock Springer, 2006.

\bibitem{DPZ08}
G.~Da~Prato and J.~Zabczyk, {\em {Stochastic Equations in Infinite
  Dimensions}}, vol.~44 of {\em Encyclopedia of Mathematics and its
  Applications}.
\newblock Cambridge: Cambridge University Press, 2008.

\bibitem{Chorin_al06}
A.~J. Chorin, O.~H. Hald, and R.~Kupferman, ``Prediction from partial data,
  renormalization, and averaging,'' {\em J. Sci. Comput.}, vol.~28, no.~2-3,
  pp.~245--261, 2006.

\bibitem{Chorin_Stinis06}
A.~J. Chorin and P.~Stinis, ``Problem reduction, renormalization, and memory,''
  {\em Commun. Appl. Math. Comput. Sci.}, vol.~1, pp.~1--27, 2006.

\bibitem{Stinis06}
P.~Stinis, ``A comparative study of two stochastic mode reduction methods,''
  {\em Physica D}, vol.~213, no.~2, pp.~197--213, 2006.

\bibitem{hald_stinis}
O.~H. Hald and P.~Stinis, ``Optimal prediction and the rate of decay for
  solutions of the euler equations in two and three dimensions,'' {\em Proc.
  Natl. Acad. Sci. USA}, vol.~104, no.~16, pp.~6527--6532, 2007.

\bibitem{stinis_euler}
P.~Stinis, ``Higher-order {Mori-Zwanzig models for the Euler} equations,'' {\em
  Multiscale Model. \& Simul.}, vol.~6, no.~3, pp.~741--760, 2007.

\bibitem{Majda2001}
A.~J. Majda, I.~Timofeyev, and E.~Vanden-Eijnden, ``A mathematical framework
  for stochastic climate models,'' {\em Commun. Pure Appl. Math.}, vol.~54,
  pp.~891--974, 2001.

\bibitem{MS11}
I.~Melbourne and A.~M. Stuart, ``A note on diffusion limits of chaotic
  skew-product flows,'' {\em Nonlinearity}, vol.~24, no.~4, pp.~1361--1367,
  2011.

\bibitem{GM13}
G.~A. Gottwald and I.~Melbourne, ``Homogenization for deterministic maps and
  multiplicative noise,'' {\em Proc. R. Soc. Lond. Ser. A}, vol.~469, no.~2156,
  pp.~20130201, 16 pp., 2013.

\bibitem{Tem97}
R.~Temam, {\em {I}nfinite-{D}imensional {D}ynamical {S}ystems in {M}echanics
  and {P}hysics}, vol.~68 of {\em Applied Mathematical Sciences}.
\newblock Springer-Verlag, New York, 2nd~ed., 1997.

\bibitem{budivsic2012applied}
M.~Budi{\v{s}}i{\'c}, R.~Mohr, and I.~Mezi{\'c}, ``Applied {K}oopmanism,'' {\em
  Chaos}, vol.~22, p.~047510, 2012.
\newblock doi: 10.1063/1.4772195.

\bibitem{Chekroun_roux}
M.~D. Chekroun and J.~Roux, ``Homeomorphisms group of normed vector space:
  Conjugacy problems and the {K}oopman operator,'' {\em Discrete Cont. Dyn.
  Syst. Series A (DCDS-A)}, vol.~33, pp.~3957--3980, 2013.

\bibitem{cornfeld1982ergodic}
I.~P. Cornfeld, S.~V. Fomin, and Y.~G. Sinai, {\em Ergodic {T}heory}.
\newblock Springer-Verlag, Berlin, 1982.

\bibitem{lasota1994chaos}
A.~Lasota and M.~Mackey, {\em {Chaos, Fractals, and Noise: Stochastic Aspects
  of Dynamics}}, vol.~97.
\newblock Springer, 1994.

\bibitem{brezis_book}
H.~Br{\'{e}}zis, {\em Functional {Analysis, Sobolev Spaces and Partial
  Differential Equations}}.
\newblock Springer, 2010.

\bibitem{EN00}
K.-J. Engel and R.~Nagel, {\em {One-Parameter Semigroups for Linear Evolution
  Equations}}, vol.~194 of {\em Graduate Texts in Mathematics}.
\newblock New York: Springer-Verlag, 2000.

\bibitem{butterley2007smooth}
O.~Butterley and C.~Liverani, ``Smooth {A}nosov flows: correlation spectra and
  stability,'' {\em J. Modern Dynamics}, vol.~1, pp.~301--322, 2007.

\bibitem{GKS04}
D.~Givon, R.~Kupferman, and A.~Stuart, ``Extracting macroscopic dynamics: model
  problems and algorithms,'' {\em Nonlinearity}, vol.~17, no.~6, pp.~R55--R127,
  2004.

\bibitem{Rudin}
W.~Rudin, {\em Real and Complex Analysis (3rd ed.)}.
\newblock New York: McGraw-Hill, 1987.

\bibitem{dudley_book}
R.~M. Dudley, {\em Real Analysis and Probability}.
\newblock Cambridge University Press, 2002.

\bibitem{kallenberg_book}
O.~Kallenberg, {\em {Foundations of Modern Probability}}.
\newblock Springer, 2002.

\bibitem{Crauel02}
H.~Crauel, {\em Random {P}robability {M}easures on {P}olish {S}paces}.
\newblock Taylor \& Francis Inc., 2002.

\bibitem{Mor65}
H.~Mori, ``Transport, collective motion, and brownian motion,'' {\em Prog.
  Theor. Phys.}, vol.~33, no.~3, pp.~423--455, 1965.

\bibitem{nakaji58}
S.~Nakajima, ``On quantum theory of transport phenomena steady diffusion,''
  {\em Prog. Theor. Phys.}, vol.~20, no.~6, pp.~948--959, 1958.

\bibitem{PR61}
I.~Prigogine and P.~Resibois, ``On the kinetics of the approach to
  equilibrium,'' {\em Physica}, vol.~27, no.~7, pp.~629--646, 1961.

\bibitem{Zwa60}
R.~Zwanzig, ``Ensemble method in the theory of irreversibility,'' {\em J. Chem.
  Phys.}, vol.~33, p.~1338, 1960.

\bibitem{Zwa64}
R.~Zwanzig, ``On the identity of three generalized master equations,'' {\em
  Physica}, vol.~30, no.~6, pp.~1109--1123, 1964.

\bibitem{chorin2000optimal}
A.~J. Chorin, O.~H. Hald, and R.~Kupferman, ``Optimal prediction and the
  {Mori--Zwanzig} representation of irreversible processes,'' {\em Proc. Natl.
  Acad. Sci. USA}, vol.~97, no.~7, pp.~2968--2973, 2000.

\bibitem{lehmann1998theory}
E.~Lehmann and G.~Casella, {\em Theory of point estimation}, vol.~31.
\newblock Springer, 1998.

\bibitem{Franzke_etal07}
C.~Franzke, A.~J. Majda, and G.~Branstator, ``{The origin of nonlinear
  signatures of planetary wave dynamics: Mean phase space tendencies and
  contributions from non-Gaussianity},'' {\em J.~Atmos.~Sci.}, vol.~{64},
  pp.~{3987--4003}, {NOV} {2007}.

\bibitem{Majda_etal2008}
A.~J. Majda, C.~Franzke, and B.~Khouider, ``{An applied mathematics perspective
  on stochastic modelling for climate},'' {\em Phil.\ Trans.\ Roy.\ Soc.\
  London\ A}, vol.~{366}, pp.~{2429--2455}, {JUL 28} 2008.

\bibitem{Kim_Ghil'93a}
M.~Kimoto and M.~Ghil, ``{Multiple flow regimes in the Northern Hemisphere
  winter. Part I: Methodology and hemispheric regimes},'' {\em J.~Atmos.~Sci.},
  vol.~{50}, pp.~{2625--2643}, 1993.

\bibitem{Kim_Ghil'93b}
M.~Kimoto and M.~Ghil, ``{Multiple flow regimes in the Northern Hemisphere
  winter. Part II: Sectorial regimes and preferred transitions},'' {\em
  J.~Atmos.~Sci.}, vol.~{50}, pp.~{2645--2673}, 1993.

\bibitem{GhCh87}
M.~Ghil and S.~Childress, {\em {Topics in Geophysical Fluid Dynamics:
  Atmospheric Dynamics, Dynamo Theory and Climate Dynamics}}.
\newblock New York/Berlin: Springer-Verlag, 1987.

\bibitem{may2001stability}
R.~M. May, {\em {Stability and Complexity in Model Ecosystems}}.
\newblock Princeton University Press, 2001.

\bibitem{peschel1986predator}
M.~Peschel and W.~Mende, {\em The {Predator-Prey Model: Do We Live in a
  Volterra World?}}
\newblock Springer,Wien/New York, 1986.

\bibitem{kozlov2013chaos}
V.~Kozlov and S.~Vakulenko, ``On chaos in {Lotka--Volterra systems: An}
  analytical approach,'' {\em Nonlinearity}, vol.~26, no.~8, p.~2299, 2013.

\bibitem{Lorenz63}
E.~N. Lorenz, ``Deterministic nonperiodic flow,'' {\em J. Atmos. Sci.},
  vol.~20, no.~2, pp.~130--141, 1963.

\bibitem{arneodo1982strange}
A.~Arneodo, P.~Coullet, J.~Peyraud, and C.~Tresser, ``Strange attractors in
  {Volterra} equations for species in competition,'' {\em J. Math. Biology},
  vol.~14, no.~2, pp.~153--157, 1982.

\bibitem{vano2006chaos}
J.~Vano, J.~Wildenberg, M.~Anderson, J.~Noel, and J.~Sprott, ``Chaos in
  low-dimensional {Lotka--Volterra} models of competition,'' {\em
  Nonlinearity}, vol.~19, no.~10, p.~2391, 2006.

\bibitem{RC11}
L.~Roques and M.~D. Chekroun, ``Probing chaos and biodiversity in a simple
  competition model,'' {\em Ecological Complexity}, vol.~8, no.~1, pp.~98--104,
  2011.

\bibitem{voit2010parameter}
E.~Voit and I.-C. Chou, ``Parameter estimation in canonical biological systems
  models,'' {\em Intl. J. Systems Synthetic Biol.}, vol.~1, pp.~1--19, 2010.

\bibitem{dupuis1987large}
P.~Dupuis, ``Large deviations analysis of reflected diffusions and constrained
  stochastic approximation algorithms in convex sets,'' {\em Stochastics},
  vol.~21, no.~1, pp.~63--96, 1987.

\bibitem{DurbinWatson}
J.~Durbin and G.~S. Watson, ``Testing for serial correlation in least squares
  regression. i,'' {\em Biometrika}, vol.~37, no.~3-4, pp.~409--428, 1950.

\bibitem{Kwasniok96}
F.~Kwasniok, ``The reduction of complex dynamical systems using principal
  interaction patterns,'' {\em Physica D}, vol.~92, pp.~28--60, 1996.

\bibitem{Kwasniok07}
F.~Kwasniok, ``{Reduced atmospheric models using dynamically motivated basis
  functions},'' {\em J.~Atmos.~Sci.}, vol.~{64}, pp.~{3452--3474}, 2007.

\bibitem{Gill_etal1981}
P.~E. Gill, W.~Murray, and M.~H. Wright, {\em {P}ractical {O}ptimization}.
\newblock London: Academic Press, 1981.

\end{thebibliography}
\end{document}